\documentclass{aims}
\usepackage{amsmath}
\usepackage{amssymb}
  \usepackage{paralist}
  \usepackage{graphics} 
  \usepackage{epsfig} 
\usepackage{graphicx}  \usepackage{epstopdf}
 \usepackage[colorlinks=true]{hyperref}
\hypersetup{urlcolor=blue, citecolor=red}

  \textheight=8.2 true in
   \textwidth=5.0 true in
    \topmargin 30pt
     \setcounter{page}{1}



\newtheorem{theorem}{Theorem}[section]

\newtheorem{lemma}[theorem]{Lemma}
\newtheorem{proposition}[theorem]{Proposition}

\newtheorem{problem}{Problem}
\theoremstyle{definition}
\newtheorem{definition}[theorem]{Definition}
\newtheorem{remark}[theorem]{Remark}

\newtheorem{assumption}{}

\title[Running heading with forty characters or less] 
      {Linear-Quadratic-Gaussian Mean-Field Controls of Social Optima}

\author[Zhenghong Qiu, Jianhui Huang and Tinghan Xie]{}

\subjclass{Primary: 91A23, 93E20; Secondary: 91A12, 91A25, 93E03.}
 \keywords{Weakly-coupled stochastic system, linear-quadratic-Gaussian, team-optimization, social-optimality, mean-field forward-backward stochastic differential equation.}

 \email{zhenghong.qiu@connect.polyu.hk}
 \email{james.huang@polyu.edu.hk}
 \email{tinghan.xie@connect.polyu.hk}

\thanks{The second author is supported by the financial support by RGC Grants PolyU 153005/14P, 153275/16P}



\thanks{$^*$ Corresponding author: tinghan.xie@connect.polyu.hk}

\begin{document}
\maketitle

\centerline{\scshape Zhenghong Qiu, Jianhui Huang and Tinghan Xie$^{*}$}
\medskip
{\footnotesize
 \centerline{Department of Applied Mathematics}
   \centerline{The Hong Kong Polytechnic University, Hong Kong, China}
} 

%

\bigskip

 \centerline{(Communicated by the associate editor name)}

\begin{abstract}
This paper investigates a class of unified stochastic linear quadratic Gaussian (LQG) social optima problems involving a large number of weakly-coupled interactive agents under a {generalized} setting. For each individual agent, the control and state process enters both diffusion and drift terms in its linear dynamics, and the control weight might be \emph{indefinite} in cost functional. This setup is {innovative and has great theoretical and realistic significance}
as its applications in mathematical finance {(e.g., portfolio selection in mean-variation model)}. Using some \emph{fully-coupled} variational analysis under person-by-person optimality principle, and mean-field approximation method, the decentralized social control is derived by a class of new type consistency condition (CC) system for typical representative agent. Such CC system is some mean-field forward-backward stochastic differential equation (MF-FBSDE) combined with \emph{embedding representation}. The well-posedness of such forward-backward stochastic differential equation (FBSDE) system is carefully examined. The related social asymptotic optimality is related to the convergence of the average of a series of weakly-coupled backward stochastic differential equation (BSDE). They are verified through some Lyapunov equations.
\end{abstract}
\section{Introduction}


Suppose that $(\Omega,\mathcal{F},\{\mathcal{F}_t\}_{0\leq t\leq T},\mathbb{P})$ is a complete filtered probability space, and $W(\cdot)$=$\{W_{1}(\cdot)$, $\cdots$, $W_N(\cdot)\}$ is a $N\times 1$-dimensional standard Brownian motion defined on it. $\{\mathcal{F}_t\}_{t\geq0}$ is the natural filtration generated by $W(\cdot)$ augmented by all $\mathbb{P}$-null sets in $\mathcal{F}$.
In this paper, we consider a weakly-coupled large-population system with $N$ agents, denoted by $\{\mathcal{A}^{i}\}_{1 \leq i \leq N}.$ The state process of the $i^{\text{th}}$ agent $\mathcal A^i$ is modeled by the following controlled linear stochastic differential equation (SDE) on finite time horizon $[0,T]$:
\begin{equation}\label{Eq1}
 \left\{
 \begin{aligned}
 & d{x}_i(t) = (A(t){x}_i(t) + B(t){u}_i(t) + F(t)x^{(N)}(t))dt\\
  &\hspace{1.5cm}+ (C(t){x}_i(t) + D(t){u}_i(t) + \tilde{F}(t)x^{(N)}(t))dW_i(t), \\
 & {x}_i(0) = \xi_0. \\
 \end{aligned}
 \right.
\end{equation}
where $x^{(N)}(t)\triangleq\frac{1}{N}{\sum_{i = 1}^{N}x_i(t)}$ denotes the {state-average}. $A(\cdot)$, $B(\cdot)$, $F(\cdot)$, $C(\cdot)$, $D(\cdot)$, $\tilde{F}(\cdot)$ are deterministic matrix-valued functions with appropriate dimensions. In addition, to evaluate the performance of our control, we also introduce the following individual cost functional for $\mathcal A^i$:
\begin{equation}\label{Eq2}
 \begin{aligned}
 \mathcal{J}_i(u(\cdot),\xi_0) =& \frac{1}{2}\mathbb{E}\bigg\{\int_{0}^{T}\|x_i(t) - 		\Gamma(t) x^{(N)}(t) - \eta(t)\|_{Q(t)}^2 + \|u_i(t)\|^2_{R(t)} dt\\
 &\hspace{1cm}+\|x_i(T) - 		\bar{\Gamma} x^{(N)}(T) - \bar{\eta}\|_{G}^2\bigg\},
 \end{aligned}
\end{equation}
where $u(\cdot)=(u_1(t),\cdots,u_N(t))\in\mathcal{U}_c$ and $Q(\cdot)$, $R(\cdot)$ and $G(\cdot)$ are weight matrices.
All agents work cooperatively and aim to minimize the social cost functional, which is denoted as follows:
\begin{equation}\label{Eq3}
\mathcal{J}^{(N)}_{soc}(u(\cdot)) = \sum_{i=1}^{N} \mathcal{J}_{i}(u(\cdot)).
\end{equation}
In what follows, for sake of notation simplicity, we may suppress the time subscript $t$ if necessary.\\

The above setup (1), (2) and (3) has the following features in its structure.\begin{itemize}
 \item (1) All agents are highly interactive and coupled in their dynamics and cost functionals due to the presence of state-average $x^{(N)}$. In particular, the individual cost functional depends on $u=\{u_{1}, u_{2}, \cdots, u_{n}\}$, the control profile of all agents owning to such weak-coupling. Thus, all agents frame a large-population system with mean-field type. Such system arises from various fields \cite{bfy2013}, \cite{cd2013}, \cite{tzb2014}, \cite{km2014}, \cite{ll2007}, \cite{mb2017}. Because of such mean-field structure, the related dynamic optimization is subject to the curse of dimensionality and complexity in numerical computation. Consequently, decentralized control which is based on the local
 information set will be used instead of centralized control which is based on the full information set. Note that the decentralized control for multi-agent system is well documented in literature (see \cite{cd2004}, \cite{hcm2007a}, \cite{lz2008} \cite{bj2017a}).

 \item (2) Unlike the noncooperative game in \cite{h2010}, \cite{mb2018}, \cite{nc2013}, where agents try to minimize their own individual cost functional, all agents in our model aim to minimize the social cost functional which is the summation of the cost functionals of all agents. The relevant analysis is also very different: all agents aim to reach the same criteria in Pareto or social optimality. They are seeking some social optimal points instead the Nash equilibrium. Also, in the presence of weakly-coupled interactive agents, some variational analysis should be applied to the person-by-person optimality to obtain some necessary condition for such Pareto optimality criteria. Note that the social optima is also well studied in literature, readers can refer to \cite{dr2012}, \cite{gmm2012}, \cite{tg1973},\cite{hch1972}, \cite{jm1955}, \cite{ra1962}.

 \item By (1) and (2), our work should be framed within the mean-field social optima with numerous cooperative agents. Note that such problem has drawn increasing research attentions in recent years, such as \cite{hcm2012}, \cite{bjj2017} for LQG social optima with constant noise, \cite{bj2017b} for related analysis in Markov LQG setup, \cite{hww2010} for related analysis in economic social welfare problem and \cite{bjj2017} for robust LQG social optima with drift uncertainty.

\end{itemize}

Note that in above state equation {a generalized setting is considered}. The state process $x_i(\cdot)$, control process $u_{i}(\cdot)$ and state-average $x^{(N)}$ enter the diffusion term when $C, D, \tilde{F} \neq 0$, { while in the recent research of mean-field social optima (see \cite{hcm2007a}, \cite{bjj2017}, \cite{wang2019social}), only constant volatility situation is studied.} In particular, when $D \neq 0,$ the diffusion term is dependent on the control directly and the related stochastic LQG problem can be referred as \emph{multiplicative-noise} control problem. The inclusion of control variable in diffusion is well motivated by various real applications. One such example comes from the well-known mean-variance portfolio problem (\cite{dr1991}, \cite{r1971}, \cite{xz2007}, \cite{zl2000}) where the control process (risky portfolio allocation) naturally enters the dynamics for given wealth process.
There are rich literature for the discussion of related LQG problem, readers can refer \cite{sly2016}, \cite{sy2014}, \cite{y2006}, \cite{y2013}, \cite{yz1999}.

Our control-dependent setup is different from \cite{jm2017}, \cite{h2010}, \cite{hcm2007a}, \cite{hcm2012}, \cite{bjj2017} in which $D=0$ and only drift is directly control-dependent. Such problem can be referred as \emph{additive-noise} LQG control problem, and has already been well investigated (see \cite{h2010}, \cite{hcm2007a}, \cite{mb2017}, \cite{bj2017b}). In principle, additive-noise problem has no essential distinction to deterministic or constant volatility LQG (see \cite{h2010}, \cite{hcm2012}, \cite{bjj2017}). Actually, with the help of constant variation method and separable property for linear system, the state can be represented by linear functional on state and control separately.

Besides, the weight matrices in cost functional are indefinite in our paper.  Recently, the LQG frameworks with indefinite weight matrices have been studied extensively in \cite{sly2016} and \cite{sy2014}. This setting has some interesting applications in mathematical finance (see \cite{yz1999}, \cite{zl2000}). However, in some recent literatures of team optimization (e.g., \cite{hcm2012}, \cite{bjj2017}, \cite{wang2019social}), only positive semi-definite weight is considered.  Consequently, our paper might  be the first time to formulate a team problem under {such generalized setting with realistic significance. In addition, such extension also brings some practical difficulties in our research.}

Compared with pervious works, the difficulties appear in our research are as follows:
\begin{enumerate}
  \item In contrast with the conditions in \cite{jm2017}, \cite{hcm2012}, \cite{mb2018}, \cite{bjj2017}, the weight coefficients in our model can be indefinite, and the convexity {of the social cost functional} need to be discussed. Unlike game problems, {where each agent only aims to minimize its own low-dimensional personal cost functional}, our team problem is a high-dimensional control problem essentially. Hence, it is very difficult to verify its convexity directly due to the dimensionality. However, the convexity is crucial to the problem solvability {(see \cite{sly2016})}. In this paper, we  bring some low-dimensional criteria for the convexity of the social cost functional by using some algebraic techniques.
  \item In the general frameworks of mean-field games  (see \cite{bfy2013}, \cite{bensoussan2016linear}, \cite{nie}), usually the auxiliary control problem can be obtained directly {by freezing the state-average as some deterministic term}. However, in the framework of social problem, this scheme will bring some ineffective strategy, which can not achieve the asymptotic optimality. Thus, variational techniques are applied to distinguish the high-order infinitesimals after the mean-filed approximation. In particular, $N +1$ additional adjoint processes should be introduced to deal with the cross-terms in the cost functional variation  and a new type of auxiliary control problem would be derived.
  \item {Our consistency condition (CC) system here is a highly coupled mean-field forward-backward stochastic differential equation (MF-FBSDEs) system, instead of an ordinary differential equations (ODEs) {system} in some other general cases. Due to $C, \tilde{F} \neq 0$, the adapted term enters the drift term of the CC system. Thus, the dynamics of the mean-field terms cannot be obtained  by taking expectation.}  It is complicated to investigate its solvability directly by decoupling method. Thus, by applying decentralizing method, we  transform it to a linear forward-backward stochastic differential equation (FBSDE) system, and study the well-posedness of {the} new system.
  \item Unlike some previous works in \cite{hcm2012}, \cite{bjj2017}, \cite{bj2017b}, we use linear operator method and Fr\'{e}chet derivative to prove the asymptotic social optimality. Because of $C, F, \tilde{F}\neq 0$, the error estimates are very hard to be given directly, since some of them are coupled without explicit expression. To overcome such difficulty, we decouple them via Lyapunov equations and estimate them in proper order.
\end{enumerate}

The main contributions of the paper are summarized as follows:
\begin{itemize}
 \item We setup a class of LQG control problem where both the drift and diffusion terms are dependent on {state process}, control process and state-average and give an approach to obtain the social optimal solution.
 \item By discussing the  weight coefficients, some low-dimensional criteria  for the convexity of the social cost functional, which is a high-dimensional system, are obtained.
 \item A highly coupled CC system (MF-FBSDE) is transformed to a equivalent FBSDE by decentralizing transformation. The existence and {uniqueness of the  CC system solution} is characterized by a Riccati equation.
 \item By using the perturbation { method to analyse} the Fr\'{e}chet derivative of cost functional, the decentralized mean-field controls we derived are proved to be asymptotically optimal. {To prove its asymptotical optimality, we  apply some classical estimates of SDEs, and investigate the optimality loss.}
\end{itemize}

This paper is organized as follows: In Section 2, we  formulates the social optimal LQG control problem. In Section 3, the convexity of social cost functional is discussed. We construct an auxiliary optimal control problem based on person-by-person optimality and design the decentralized control in Section 4 and Section 5 respectively. In Section 6, some prior lemmas are given, and based on them the asymptotic social optimality is proved. A numerical example is provided to simulate the efficiency of decentralized control in Section 7. Section 8 is the conclusion of this paper.

\section{Problem Formulation}

Before formulating the problem, we set a convention about notation.

\emph{Notation}: Throughout this paper, $\mathbb{R}^{n\times m}$ and $\mathbb{S}^n$ denote the set of all $(n\times m)$ real matrices and the set of all $(n\times n)$ symmetric matrices, respectively. We denote $\|\cdot\|$ as the standard Euclidean norm and $\langle\cdot,\cdot\rangle$ as the standard Euclidean inner product. For a vector $x$ and a symmetric matrix $S$, $\|x\|_S^2= \langle Sx,x\rangle=x^TSx$, where $x^T$ is the transpose of $x$. For a matrix $M$, the norm $\|M\|=\sqrt{\text{Tr}(M^TM)}$, and the max-norm, which is equivalent to the maximum absolute value of all elements, is denoted by $\|M\|_{\max}$. $M> 0 \ (\geq 0)$ means that $M$ is positive (semi-positive) definite and $M\gg 0$ means that, $M - \varepsilon I \geq 0$, for some $\varepsilon>0$. $\lambda_{\max}(M)$ denotes the maximum eigenvalue of matrix $M$, while $\lambda_{\min}(M)$ denotes the minimum eigenvalue of matrix $M$.
Let $\sigma$-algebra $\mathcal{F}_t^i=\sigma(W_i(s), 0\leq s\leq t)$ for $0\leq i\leq N$, $\mathcal{F}_t=\sigma(W_i(s), 0\leq s\leq t, 1\leq i\leq N)$,
 $\mathbb{F}^i=\{\mathcal{F}_t^i\}_{0\leq t\leq T}$, $0\leq i\leq N$ is the natural filtration generated by $W_i(\cdot)$. Correspondingly, we denote filtration
  $\mathbb{F}=\{\mathcal{F}_t\}_{0\leq t\leq T}$.
  Next, for Euclidean space $\mathbb{H}$, we introduce the following space:
\begin{equation*}
\begin{aligned}
L^{\infty}(0,T;\mathbb{H})=&\big\{\varphi : [0,T]\rightarrow \mathbb{H}\big| \ \varphi(\cdot) \text{ is bounded and measurable}\big\},\\
L_{\mathbb{F}}^2(\Omega;\mathbb{H})=&\big\{\xi : \Omega\rightarrow \mathbb{H}\big| \ \xi \text{ is }\mathbb{F}\text{-measurable, }\mathbb{E}\|\xi\|^2<\infty\big\},\\
L_{\mathbb{F}}^2(0,T;\mathbb{H})=&\big\{x : [0,T]\times\Omega\rightarrow \mathbb{H}\big| \ x(\cdot) \text{ is }\mathbb{F}\text{-progressively measurable, }\\
&\hspace{0.5cm}\|x(t)\|_{L^2}^2\triangleq\mathbb{E}\int_{0}^{T}\|x(t)\|^2dt<\infty\big\},\\
L_{\mathbb{F}}^2(\Omega;C([0,T];\mathbb{H}))=&\big\{x : [0,T]\times\Omega\rightarrow \mathbb{H}\big| \ x(\cdot) \text{ is }\mathbb{F}\text{-progressively measurable,} \\
&\hspace{0.5cm}\text{continuous, } \mathbb{E}\sup_{t\in[0,T]}\|x(t)\|^2<\infty\big\}.\\
\end{aligned}
\end{equation*}

 Further, base on the information structure, two types of admissible control sets are defined as follows. The centralized admissible control set is given by
\begin{equation*}
  \mathcal{U}_c = \Big\{(u_0,u_1,\cdots,u_N)|u_i(t)\in L_{\mathbb{F}}^2(0,T;\mathbb{R}^m), 0\leq i \leq N\Big\}.
\end{equation*}
Correspondingly, the decentralized admissible control set for $\mathcal A^i$ is given by
\begin{equation*}
\mathcal{U}_i=\left\{u_i|u_i(t)\in L_{\mathbb{F}^i}^2(0,T;\mathbb{R}^m)\right\},\quad 1\leq i \leq N.
\end{equation*}
Now, we introduce the following two assumptions
\begin{assumption}\label{A1}
 {\rm\textbf{(A1)}} The coefficients of the state equation satisfy the following:
 \[A, C, F, \tilde{F}\in L^\infty(0,T;\mathbb{R}^{n\times n}),\quad B, D \in L^\infty(0,T;\mathbb{R}^{n\times m}).\] 
\end{assumption}
\begin{assumption}\label{A2} {\rm\textbf{(A2)}}
The weighting coefficients in the cost functional satisfy the following:
\begin{equation*}\left\{
\begin{aligned}
&Q\in L^{\infty}(0,T;\mathbb{S}^n),\ \Gamma\in L^\infty(0,T;\mathbb{R}^{n\times n}),\ R\in L^{\infty}(0,T;\mathbb{S}^m),\ \eta\in L^{2}(0,T;\mathbb{R}^{n}),\\
&\bar{\Gamma}\in \mathbb{R}^{n\times n},\quad G\in\mathbb{S}^n,\ \bar{\eta}\in \mathbb{R}^{n}.
\end{aligned}\right.
\end{equation*}
\end{assumption}
 Under \textbf{(A\ref{A1})}, for any given $(u_1,\cdots,u_N)\in\mathcal{U}_c$, \eqref{Eq1} admits a unique solution $(x_1,\cdots,x_N)$.  Under \textbf{(A\ref{A2})}, the cost functional \eqref{Eq2} is well-posed. We aim to work out the optimal control for each agent to minimize the social cost functional. Thus we propose the following social optimal problem:
\begin{problem}\label{P1}
Minimize $\mathcal{J}^{(N)}_{soc}(u)$ over $\{u = (u_1,\cdots,u_N)|u_i\in\mathcal{U}_c\}$.
\end{problem}
However, in the practical application, usually, the agent can only access its own information (i.e., $\sigma\{\mathcal{F}_t^i \cup \sigma(x_i(s),s\leq t)\}$) and the information of the others may be unavailable for it. For this reason, we try to work out a so-called decentralized control, which only depends on the {agent's own} information, and we propose another problem.
\begin{problem}\label{P2}
Minimize $\mathcal{J}^{(N)}_{soc}(u)$ over $\{u = (u_1,\cdots,u_N)|u_i\in\mathcal{U}_i\}$.
\end{problem}

\section{Convexity}

{In this section, the convexity of the social cost functional will be studied. The weight coefficients $Q$, $R$ and $G$ profoundly influence the convexity of the cost functional. We start with positive semi-definite weight case, which is relatively simple, and then further consider indefinite weight case. Indefinite weight setting has some  interesting mathematical finance background (see \cite{yz1999}, \cite{zl2000}).}
Before that, the {state dynamics and the social cost functional} could be rewritten {in a high-dimensional form}.
\begin{equation}\label{Eq4}
 \begin{aligned}
 & d\mathbf{x} = (\mathbf{A}\mathbf{x} + \mathbf{B}\mathbf{u})dt + \sum_{i = 1}^{N}(\mathbf{C}_i\mathbf{x} + \mathbf{D}_i\mathbf{u})dW_i, \quad \mathbf{x}(0) = {\Xi}.\\
 \end{aligned}
\end{equation}
\begin{equation}\label{Eq6}
 \begin{aligned}
 &\mathcal{J}^{(N)}_{soc}(u,\xi_0) = \frac{1}{2}\mathbb{E}\int_{0}^{T} 
 \bigg\{\mathbf{x}^T\mathbf{Q}\mathbf{x} + 2\mathbf{S}_1^T\mathbf{x} + N\eta^TQ\eta + \mathbf{u}^T\mathbf{R}\mathbf{u} dt\\ &\hspace{4cm}+\mathbf{x}(T)^T\mathbf{G}\mathbf{x}(T) + 2\mathbf{S}_2^T\mathbf{x}(T) + N\bar{\eta}^TG\bar{\eta}\bigg\}, \\
 \end{aligned}
\end{equation}
where
\begin{equation}\label{Eq5}\resizebox{\textwidth}{!}{$
 \begin{aligned}
 & \mathbf{A}=
 \left(\begin{smallmatrix}
 A + \frac{F}{N} & \frac{F}{N} & \cdots & \frac{F}{N} \\
 \frac{F}{N} & A + \frac{F}{N} & \cdots & \frac{F}{N} \\
 \vdots & \vdots & \ddots &\vdots\\
 \frac{F}{N} & \frac{F}{N} & \cdots & A + \frac{F}{N} \\
 \end{smallmatrix}\right)_{(Nn\times Nn)},
 \mathbf{B}=
 \left(\begin{smallmatrix}
 B & 0 & \cdots & 0 \\
 0 & B & \cdots & 0 \\
 \vdots & \vdots & \ddots &\vdots\\
 0 & 0 & \cdots & B \\
 \end{smallmatrix}\right)_{(Nn\times Nm)},\ \ \Xi = \left(\begin{smallmatrix}
 \xi_0 \\
 \vdots\\
 \xi_0
 \end{smallmatrix}\right)_{(Nn\times 1)}, \ \ \mathbf{x}=
 \left(\begin{smallmatrix}
 x_1 \\
 \vdots\\
 x_N
 \end{smallmatrix}\right)_{(Nn\times 1)},\\
 & \mathbf{C}_i=\begin{smallmatrix}
 1 \\
 \vdots \\
 i^{\text{th}} \\
 \vdots \\
 N
 \end{smallmatrix}
 \left(\begin{smallmatrix}
 0 & \cdots & 0 & 0 & 0 & \cdots & 0\\
 \vdots & \ddots & \vdots & \vdots & \vdots & \ddots & \vdots\\
 \frac{\tilde{F}}{N} & \cdots &\frac{\tilde{F}}{N} & \frac{\tilde{F}}{N} + C &\frac{\tilde{F}}{N} &\cdots & \frac{\tilde{F}}{N}\\
 \vdots & \ddots & \vdots & \vdots & \vdots & \ddots & \vdots\\
 0 & \cdots & 0 & 0 & 0 & \cdots & 0\\
 \end{smallmatrix}\right)_{(Nn\times Nn)},\ \mathbf{D}_i=\begin{smallmatrix}
 1 \\
 \vdots \\
 i^{\text{th}} \\
 \vdots \\
 N
 \end{smallmatrix}\left(\begin{smallmatrix}
 0 & \cdots &  0  & \cdots & 0\\
 \vdots & \ddots  & \vdots  & \ddots & \vdots\\
 0 & \cdots  & D  & \cdots & 0\\
 \vdots & \ddots  & \vdots  & \ddots & \vdots\\
 0 & \cdots & 0  & \cdots & 0\\
 \end{smallmatrix}\right)_{(Nn\times Nm)},\ \ \mathbf{u}=
 \left(\begin{smallmatrix}
 u_1 \\
 \vdots\\
 u_N
 \end{smallmatrix}\right)_{(Nm\times 1)},
 \end{aligned} $}
\end{equation}
and
\begin{equation}\label{Eq7}\resizebox{\textwidth}{!}{$
 \begin{aligned}
 \mathbf{Q}=&
 \left(\begin{smallmatrix}
 Q + \frac{1}{N}(\Gamma^TQ\Gamma - Q\Gamma - \Gamma^TQ) & \frac{1}{N}(\Gamma^TQ\Gamma - Q\Gamma - \Gamma^TQ) & \cdots & \frac{1}{N}(\Gamma^TQ\Gamma - Q\Gamma - \Gamma^TQ) \\
 \frac{1}{N}(\Gamma^TQ\Gamma - Q\Gamma - \Gamma^TQ) & Q + \frac{1}{N}(\Gamma^TQ\Gamma - Q\Gamma - \Gamma^TQ) & \cdots & \frac{1}{N}(\Gamma^TQ\Gamma - Q\Gamma - \Gamma^TQ) \\
 \vdots & \vdots & \ddots &\vdots\\
 \frac{1}{N}(\Gamma^TQ\Gamma - Q\Gamma - \Gamma^TQ) & \frac{1}{N}(\Gamma^TQ\Gamma - Q\Gamma - \Gamma^TQ) & \cdots & Q + \frac{1}{N}(\Gamma^TQ\Gamma - Q\Gamma - \Gamma^TQ) \\
 \end{smallmatrix}\right)_{(Nn\times Nn)}
 =\left(\begin{smallmatrix}
 Q & 0 & \cdots & 0 \\
 0 & Q & \cdots & 0 \\
 \vdots & \vdots & \ddots &\vdots\\
 0 & 0 & \cdots & Q \\
 \end{smallmatrix}\right) + \frac{1}{N}\left(\begin{smallmatrix}
 \hat{Q}  & \cdots & \hat{Q} \\
 \vdots & \ddots &\vdots\\
  \hat{Q} & \cdots & \hat{Q} \\
 \end{smallmatrix}\right) - \frac{1}{N}\left(\begin{smallmatrix}
 {Q}  & \cdots & {Q} \\
 \vdots & \ddots &\vdots\\
  {Q} & \cdots & {Q} \\
 \end{smallmatrix}\right),\\
 \mathbf{G}=&\left(\begin{smallmatrix}
 G & 0 & \cdots & 0 \\
 0 & G & \cdots & 0 \\
 \vdots & \vdots & \ddots &\vdots\\
 0 & 0 & \cdots & G \\
 \end{smallmatrix}\right) + \frac{1}{N}\left(\begin{smallmatrix}
 \hat{G}  & \cdots & \hat{G} \\
 \vdots & \ddots &\vdots\\
  \hat{G} & \cdots & \hat{G} \\
 \end{smallmatrix}\right) - \frac{1}{N}\left(\begin{smallmatrix}
 {G}  & \cdots & {G} \\
 \vdots & \ddots &\vdots\\
  {G} & \cdots & {G} \\
 \end{smallmatrix}\right),\mathbf{S}_1 = -\left(\begin{smallmatrix}
 \Gamma^TQ\eta - Q\eta\\
 \vdots\\
 \Gamma^TQ\eta - Q\eta\\
 \end{smallmatrix}\right)_{(Nn\times 1)},
 \mathbf{S}_2 = -\left(\begin{smallmatrix}
 \bar{\Gamma}^TG\bar{\eta} - G\bar{\eta}\\
 \vdots\\
 \bar{\Gamma}^TG\bar{\eta} - G\bar{\eta}\\
 \end{smallmatrix}\right)_{(Nn\times 1)}, \mathbf{R} = \left(\begin{smallmatrix}
 R & 0 & \cdots & 0\\
 0 & R & \cdots & 0\\
 \vdots & \vdots & \ddots & \vdots\\
 0 & 0 & \cdots & R\\
 \end{smallmatrix}\right)_{(Nn\times Nn)},
 \end{aligned} $}
\end{equation}
where $\hat Q \triangleq (\Gamma - I)^TQ(\Gamma - I)$, $\hat G \triangleq (\bar{\Gamma} - I)^TG(\bar{\Gamma} - I)$.


Through \eqref{Eq4} and \eqref{Eq6}, we know that Problem \ref{P1} is actually a $nN$-dimensional standard control problem. Using the method in \cite{sly2016}, the solvability of Problem \ref{P1} and the optimal control can be derived. However,  the population $N$ is {usually large in realistic application, which would bring great computational complexity.} Thus, in what follows, some low-dimensional criteria for the convexity of the cost functional will be studied.
\subsection{Case 1: For Q, R, G are positive semi-definite }
We start with the simplest case that the weighting coefficients are all positive semi-definite. For the convexity of the cost functional, it follows that
\begin{proposition}
 Under {\rm\textbf{(A\ref{A1})}-\textbf{(A\ref{A2})}} and $Q, R,  G\geq0$, $\mathcal{J}^{(N)}_{soc}(u,\xi_0)$ is convex w.r.t (with respect to) $u$. Moreover, if $R\gg0$, then $\mathcal{J}^{(N)}_{soc}(u,\xi_0)$ is uniformly convex.
\end{proposition}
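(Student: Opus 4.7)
The proof will rest on two structural observations. First, by the linearity of \eqref{Eq1} in $(x, u, x^{(N)})$ together with uniqueness of strong solutions under (A1), the map $u \mapsto (x_1^u, \dots, x_N^u)$ is affine on $\mathcal{U}_c$. Concretely, for $u, v \in \mathcal{U}_c$ and $\theta \in [0,1]$, the process $z := \theta x^u + (1-\theta) x^v$ solves \eqref{Eq1} with control $\theta u + (1-\theta) v$ and the same initial condition $\xi_0$, so by uniqueness $x^{\theta u + (1-\theta) v} = \theta x^u + (1-\theta) x^v$. In particular, the quantities $\phi_i(u)(t) := x_i^u(t) - \Gamma(t) x^{u,(N)}(t) - \eta(t)$ and $\psi_i(u) := x_i^u(T) - \bar\Gamma x^{u,(N)}(T) - \bar\eta$ are affine in $u$.

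Second, under $Q \geq 0$, $G \geq 0$ and $R \geq 0$, each of $y \mapsto \|y\|_{Q(t)}^2$, $y \mapsto \|y\|_G^2$ and $u_i \mapsto \|u_i\|_{R(t)}^2$ is a convex quadratic form. A convex function composed with an affine map is convex, so every integrand in \eqref{Eq2} is convex in $u$. Summing over $i$ as in \eqref{Eq3} and then applying the linear, order-preserving operations $\int_0^T dt$ and $\mathbb{E}[\cdot]$ preserves convexity and yields convexity of $\mathcal{J}^{(N)}_{soc}(u,\xi_0)$.

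For uniform convexity under $R \gg 0$, the plan is to invoke the quadratic identity
\[
\|\theta a + (1-\theta) b\|_{R(t)}^2 = \theta \|a\|_{R(t)}^2 + (1-\theta) \|b\|_{R(t)}^2 - \theta(1-\theta) \|a - b\|_{R(t)}^2
\]
with $a = u_i$, $b = v_i$. Since $R(t) - \varepsilon I \geq 0$ uniformly in $t$ for some $\varepsilon > 0$, the last term dominates $\varepsilon \theta(1-\theta) \|u_i - v_i\|^2$. Combining with the non-negative $Q$- and $G$-contributions obtained above produces
\[
\mathcal{J}^{(N)}_{soc}(\theta u + (1-\theta) v) \leq \theta \mathcal{J}^{(N)}_{soc}(u) + (1-\theta) \mathcal{J}^{(N)}_{soc}(v) - \frac{\varepsilon}{2} \theta(1-\theta) \sum_{i=1}^{N} \mathbb{E}\int_0^T \|u_i - v_i\|^2 dt,
\]
which is uniform convexity of $\mathcal{J}^{(N)}_{soc}$ on $\mathcal{U}_c$ in the natural $L^2$ norm.

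The whole argument reduces to the affine dependence of $x$ on $u$, and I do not expect any genuine obstacle there: it follows at once from linearity of \eqref{Eq1} and the uniqueness theorem. Notably, the high-dimensional reformulation \eqref{Eq4}--\eqref{Eq7} is not required for this positive semi-definite case; its role is reserved for Case 2, where pointwise convexity of the integrands breaks down and one must manipulate $\mathbf{Q}$ and $\mathbf{G}$ algebraically.
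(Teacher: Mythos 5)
Your proof is correct, but it follows a different route from the paper's. The paper works entirely in the lifted $Nn$-dimensional formulation \eqref{Eq4}--\eqref{Eq7}: it decomposes $\mathbf{Q}$ (resp.\ $\mathbf{G}$) into the diagonal block $\mathrm{diag}(Q,\dots,Q)$, the all-$\hat Q$ block, and the all-$Q$ block, observes that the first two are positive semi-definite when $Q,G\geq 0$, and verifies by an explicit block-matrix inequality that $\mathrm{diag}(Q,\dots,Q)-\frac1N(\text{all-}Q\text{ block})\geq 0$ (essentially the Cauchy--Schwarz bound $\frac1N\|\sum_i x_i\|_Q^2\leq\sum_i\|x_i\|_Q^2$); convexity of the quadratic functional then follows from $\mathbf{Q},\mathbf{G},\mathbf{R}\geq 0$, and $R\gg 0$ gives $\mathbf{R}\gg 0$ hence uniform convexity. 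You instead stay with the original per-agent cost, use affineness of $u\mapsto x^u$ (linearity plus uniqueness of the strong solution) and the fact that a convex quadratic composed with an affine map is convex, and you extract the uniform-convexity modulus explicitly from the polarization identity for $\|\cdot\|_R^2$. The two arguments encode the same underlying fact, but yours avoids the block-matrix bookkeeping and makes the convexity modulus explicit, whereas the paper's computation establishes the pointwise matrix inequalities $\mathbf{Q}\geq 0$, $\mathbf{G}\geq 0$ in a form that is then reused and perturbed in the indefinite-weight Case 2 (Propositions \ref{prop2}--\ref{prop3.5}), which is why the paper sets things up that way. Your closing remark that the high-dimensional reformulation is not needed for this case is accurate.
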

\begin{proof}
 Under the assumption $Q, R, G\geq0$, we know
$\left(\begin{smallmatrix}
 \hat{Q}  & \cdots & \hat{Q} \\
 \vdots & \ddots &\vdots\\
  \hat{Q} & \cdots & \hat{Q} \\
 \end{smallmatrix}\right)\geq 0$, $\left(\begin{smallmatrix}
 \hat{G}  & \cdots & \hat{G} \\
 \vdots & \ddots &\vdots\\
  \hat{G} & \cdots & \hat{G} \\
 \end{smallmatrix}\right)\geq 0$ and $\mathbf{R}\geq 0$.
 By the definition of positive semi-definiteness, we can obtain the following two inequalities
 \begin{equation*}
 \begin{aligned}
 &\left(\begin{smallmatrix}
 Q & 0 & \cdots & 0 \\
 0 & Q & \cdots & 0 \\
 \vdots & \vdots & \ddots &\vdots\\
 0 & 0 & \cdots & Q \\
 \end{smallmatrix}\right) - \frac{1}{N}\left(\begin{smallmatrix}
 {Q}  & \cdots & {Q} \\
 \vdots & \ddots &\vdots\\
  {Q} & \cdots & {Q} \\
 \end{smallmatrix}\right)\geq 0, &\left(\begin{smallmatrix}
 G & 0 & \cdots & 0 \\
 0 & G & \cdots & 0 \\
 \vdots & \vdots & \ddots &\vdots\\
 0 & 0 & \cdots & G \\
 \end{smallmatrix}\right) - \frac{1}{N}\left(\begin{smallmatrix}
 {G}  & \cdots & {G} \\
 \vdots & \ddots &\vdots\\
  {G} & \cdots & {G} \\
 \end{smallmatrix}\right)\geq 0.
 \end{aligned}
\end{equation*}
Further, the convexity would follow. Moreover, if $R\gg 0$, then $\mathbf{R}\gg 0$ and $\mathcal{J}^{(N)}_{soc}(u,\xi_0)$ is uniformly convex.
\end{proof}
Next, we consider a more general situation.
\subsection{Case 2: For Q, R and G are all indefinite}
In this case, there are two situations need to be discussed: $F = \tilde{F} = 0$ and $F,\tilde{F}\neq 0$.
Now, we consider the first situation: $F = \tilde{F} = 0$.
\subsubsection{For $F = \tilde{F} = 0$}
{In this situation}, the agent's state will not be influenced by others (see \cite{hcm2012}).   In this situation, the {agents' state dynamics}  are decoupled. For the cost functional, we {have} the following proposition.
\begin{proposition}\label{prop2}
 Suppose that  $Q-\hat{Q}\geq 0$ and $G-\hat{G}\geq 0$, then for any $\Delta Q$, $\Delta G\in\mathbb{S}^n$ such that $\Delta Q \geq Q - \hat{Q}$ and $\Delta G \geq G - \hat{G}$, we have $\mathbf{Q} - \mathbf{Q}_2 \geq 0$ and $\mathbf{G} - \mathbf{G}_2 \geq 0$ where
 \begin{equation*}
 \begin{aligned}
 &\mathbf{Q}_2 = \left(\begin{smallmatrix}
 Q - \Delta Q & 0 & \cdots & 0 \\
 0 & Q - \Delta Q & \cdots & 0 \\
 \vdots & \vdots & \ddots &\vdots\\
 0 & 0 & \cdots & Q - \Delta Q \\
 \end{smallmatrix}\right),\ \ \mathbf{G}_2 = \left(\begin{smallmatrix}
 G - \Delta G & 0 & \cdots & 0 \\
 0 & G - \Delta G & \cdots & 0 \\
 \vdots & \vdots & \ddots &\vdots\\
 0 & 0 & \cdots & G - \Delta G \\
 \end{smallmatrix}\right).
 \end{aligned}
\end{equation*}
\end{proposition}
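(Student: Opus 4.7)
The plan is to reduce both semi-definiteness claims to direct quadratic-form estimates by exploiting the ``constant-block plus block-diagonal'' structure visible in~\eqref{Eq7}. Writing $\mathbf{Q}-\mathbf{Q}_2$ block by block, its diagonal blocks equal $\Delta Q+\tfrac1N(\hat Q-Q)$ and every off-diagonal block equals $\tfrac1N(\hat Q-Q)$, so it decomposes as
\[
\mathbf{Q}-\mathbf{Q}_2 \;=\; \mathrm{diag}(\Delta Q,\ldots,\Delta Q) \;+\; \tfrac{1}{N}\bigl(\mathbf{1}_N\mathbf{1}_N^T\bigr)\otimes(\hat Q-Q),
\]
which is exactly the structure I want to exploit.

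Next I would test this operator against an arbitrary block vector $\mathbf{x}=(x_1^T,\ldots,x_N^T)^T$ with $x_i\in\mathbb{R}^n$, and introduce the block-mean $\bar x=\tfrac1N\sum_i x_i$. Collapsing the double sum over the all-ones block gives
\[
\mathbf{x}^T(\mathbf{Q}-\mathbf{Q}_2)\mathbf{x}=\sum_{i=1}^{N} x_i^T\,\Delta Q\, x_i \;+\; N\,\bar x^T(\hat Q-Q)\bar x.
\]
I would then apply the standard variance-type identity
$\sum_i x_i^T M x_i=\sum_i (x_i-\bar x)^T M(x_i-\bar x)+N\bar x^T M\bar x$ with $M=\Delta Q$ to regroup the right-hand side as
\[
\mathbf{x}^T(\mathbf{Q}-\mathbf{Q}_2)\mathbf{x}=\sum_{i=1}^{N}(x_i-\bar x)^T\Delta Q(x_i-\bar x) \;+\; N\,\bar x^T\bigl[\Delta Q+\hat Q-Q\bigr]\bar x.
\]

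The conclusion then follows directly from the two hypotheses: $\Delta Q\ge Q-\hat Q\ge 0$ forces $\Delta Q\ge 0$, so the first sum is non-negative, while the same inequality rearranged gives $\Delta Q+\hat Q-Q\ge 0$, so the second term is non-negative. The claim $\mathbf{G}-\mathbf{G}_2\ge 0$ follows by the identical argument after substituting $G$, $\hat G$, $\Delta G$ for $Q$, $\hat Q$, $\Delta Q$ and invoking the terminal-time hypothesis $G-\hat G\ge 0$. I do not anticipate a serious obstacle; the only point requiring care is recognising that the rank-one piece $\mathbf{1}\mathbf{1}^T\otimes(\cdot)$ interacts with $\mathbf{x}$ only through $\bar x$, which is precisely why the variance decomposition cleanly splits the problem into the two assumed semi-definiteness conditions on $\Delta Q$ and on $\Delta Q+\hat Q-Q$.
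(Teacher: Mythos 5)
Your proof is correct and follows essentially the same route as the paper's: both evaluate the quadratic form of $\mathbf{Q}-\mathbf{Q}_2$ against an arbitrary block vector $(x_1^T,\ldots,x_N^T)^T$ and reduce nonnegativity to the two hypotheses $\Delta Q \ge Q-\hat{Q}\ge 0$ (and likewise for $G$). The only difference is organizational: you apply the mean--fluctuation identity to $\Delta Q$ so that $\Delta Q+\hat{Q}-Q$ appears on the mean $\bar x$, whereas the paper instead bounds $\tfrac{1}{N}\big(\sum_i x_i\big)^T(Q-\hat{Q})\big(\sum_i x_i\big)$ by $\sum_i x_i^T(Q-\hat{Q})x_i$, which is the same identity applied to $Q-\hat{Q}$.
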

\begin{proof}
 Consider matrix $\mathbf{Q}_2$, $\mathbf{G}_2$, $\mathbf{Q} - \mathbf{Q}_2$ and $\mathbf{G} - \mathbf{G}_2$ which are
\begin{equation*}
\resizebox{\textwidth}{!}{$
 \begin{aligned}
 &\mathbf{Q}_2 \triangleq \left(\begin{smallmatrix}
 Q - \Delta Q & 0 & \cdots & 0 \\
 0 & Q - \Delta Q & \cdots & 0 \\
 \vdots & \vdots & \ddots &\vdots\\
 0 & 0 & \cdots & Q - \Delta Q \\
 \end{smallmatrix}\right),\ \
 \mathbf{Q} - \mathbf{Q}_2 = \left(\begin{smallmatrix}
 \Delta Q + \frac{1}{N}\hat{Q} - \frac{1}{N}Q & \frac{1}{N}\hat{Q} - \frac{1}{N}Q & \cdots & \frac{1}{N}\hat{Q} - \frac{1}{N}Q \\
 \frac{1}{N}\hat{Q} - \frac{1}{N}Q& \Delta Q +\frac{1}{N}\hat{Q} - \frac{1}{N}Q & \cdots & \frac{1}{N}\hat{Q} - \frac{1}{N}Q \\
 \vdots & \vdots & \ddots &\vdots\\
 \frac{1}{N}\hat{Q} - \frac{1}{N}Q & \frac{1}{N}\hat{Q} - \frac{1}{N}Q& \cdots & \Delta Q +\frac{1}{N}\hat{Q} - \frac{1}{N}Q\\
 \end{smallmatrix}\right),\\
 &\mathbf{G}_2 \triangleq \left(\begin{smallmatrix}
 G - \Delta G & 0 & \cdots & 0 \\
 0 & G - \Delta G & \cdots & 0 \\
 \vdots & \vdots & \ddots &\vdots\\
 0 & 0 & \cdots & G - \Delta G \\
 \end{smallmatrix}\right),\ \
 \mathbf{G} - \mathbf{G}_2 = \left(\begin{smallmatrix}
 \Delta G + \frac{1}{N}\hat{G} - \frac{1}{N}G & \frac{1}{N}\hat{G} - \frac{1}{N}G & \cdots & \frac{1}{N}\hat{G} - \frac{1}{N}G \\
 \frac{1}{N}\hat{G} - \frac{1}{N}G& \Delta G +\frac{1}{N}\hat{G} - \frac{1}{N}G & \cdots & \frac{1}{N}\hat{G} - \frac{1}{N}G \\
 \vdots & \vdots & \ddots &\vdots\\
 \frac{1}{N}\hat{G} - \frac{1}{N}G & \frac{1}{N}\hat{G} - \frac{1}{N}G& \cdots & \Delta G +\frac{1}{N}\hat{G} - \frac{1}{N}G\\
 \end{smallmatrix}\right).
 \end{aligned}$}
\end{equation*}
If $Q-\hat{Q}\geq 0$ holds, then for any non-zeros vector $(x_1^T,\cdots,x_N^T)^T\in\mathbb{R}^{Nn\times 1}$ and any $\Delta Q \geq Q - \hat{Q}$,
\begin{equation*}
 \begin{aligned}
 &\left(\begin{smallmatrix}
 x_1 \\
 x_2 \\
 \vdots \\
 x_N
 \end{smallmatrix}\right)^T \left(\begin{smallmatrix}
 \Delta Q + \frac{1}{N}\hat{Q} - \frac{1}{N}Q & \frac{1}{N}\hat{Q} - \frac{1}{N}Q & \cdots & \frac{1}{N}\hat{Q} - \frac{1}{N}Q \\
 \frac{1}{N}\hat{Q} - \frac{1}{N}Q& \Delta Q +\frac{1}{N}\hat{Q} - \frac{1}{N}Q & \cdots & \frac{1}{N}\hat{Q} - \frac{1}{N}Q \\
 \vdots & \vdots & \ddots &\vdots\\
 \frac{1}{N}\hat{Q} - \frac{1}{N}Q & \frac{1}{N}\hat{Q} - \frac{1}{N}Q& \cdots & \Delta Q +\frac{1}{N}\hat{Q} - \frac{1}{N}Q\\
 \end{smallmatrix}\right)\left(\begin{smallmatrix}
 x_1 \\
 x_2 \\
 \vdots \\
 x_N
 \end{smallmatrix}\right)\\
 =&(x_1^T\Delta Q x_1 + \cdots + x_N^T\Delta Q x_N) + \frac{1}{N}(x_1 + \cdots + x_N)^T\hat{Q}(x_1 + \cdots + x_N)\\
 &- \frac{1}{N}(x_1 + \cdots + x_N)^T{Q}(x_1 + \cdots + x_N)\\
 =&(x_1^T\Delta Q x_1 + \cdots + x_N^T\Delta Q x_N) - \frac{1}{N}(x_1 + \cdots + x_N)^T({Q} - \hat{Q})(x_1 + \cdots + x_N)\\
 \geq&(x_1^T\Delta Q x_1 + \cdots + x_N^T\Delta Q x_N) - (x_1^T({Q}-\hat{Q})x_1 + \cdots + x_N^T({Q}-\hat{Q})x_N)\\
 =&x_1^T(\Delta Q - ({Q}-\hat{Q}))x_1 + \cdots + x_N^T(\Delta Q - ({Q}-\hat{Q}))x_N \geq 0.
 \end{aligned}
\end{equation*}
By the same argument, one can also obtain
\begin{equation*}
 \begin{aligned}
 &\left(\begin{smallmatrix}
 x_1 \\
 x_2 \\
 \vdots \\
 x_N
 \end{smallmatrix}\right)^T \left(\begin{smallmatrix}
 \Delta G + \frac{1}{N}\hat{G} - \frac{1}{N}G & \frac{1}{N}\hat{G} - \frac{1}{N}G & \cdots & \frac{1}{N}\hat{G} - \frac{1}{N}G \\
 \frac{1}{N}\hat{G} - \frac{1}{N}G& \Delta G +\frac{1}{N}\hat{G} - \frac{1}{N}G & \cdots & \frac{1}{N}\hat{G} - \frac{1}{N}G \\
 \vdots & \vdots & \ddots &\vdots\\
 \frac{1}{N}\hat{G} - \frac{1}{N}G & \frac{1}{N}\hat{G} - \frac{1}{N}G& \cdots & \Delta G +\frac{1}{N}\hat{G} - \frac{1}{N}G\\
 \end{smallmatrix}\right)\left(\begin{smallmatrix}
 x_1 \\
 x_2 \\
 \vdots \\
 x_N
 \end{smallmatrix}\right)\geq 0.
 \end{aligned}
\end{equation*}
Thus, $\mathbf{Q} - \mathbf{Q}_2 \geq 0$ and $\mathbf{G} - \mathbf{G}_2 \geq 0$ and the proposition is proved.
\end{proof}
Consequently, Proposition \ref{prop2} implies
\begin{equation}\label{Eq8}
\resizebox{\textwidth}{!}{$
 \begin{aligned}
 &\frac{1}{2}\mathbb{E}\bigg\{\int_{0}^{T} \mathbf{x}^T\mathbf{Q}\mathbf{x} + \mathbf{u}^T\mathbf{R}\mathbf{u} dt +\mathbf{x}(T)^T\mathbf{G}\mathbf{x}(T)\bigg\} \geq \frac{1}{2}\mathbb{E}\bigg\{\int_{0}^{T} \mathbf{x}^T\mathbf{Q}_2\mathbf{x} + \mathbf{u}^T\mathbf{R}\mathbf{u} dt+\mathbf{x}(T)^T\mathbf{G}_2\mathbf{x}(T)\bigg\}.
 \end{aligned}$}
\end{equation}
Motivated by \eqref{Eq8},
we have the following result:
\begin{proposition}\label{prop3.3}
 Under \text{\rm\textbf{(A1)}-\textbf{(A2)}}, $F = \tilde{F} = 0$, $Q-\hat{Q} \geq 0$ and $G-\hat{G} \geq 0$, if there exist some $\Delta Q$, $\Delta G\in\mathbb{S}^n$ such that $\Delta Q \geq Q - \hat{Q}$, $\Delta G \geq G - \hat{G}$ and the following low-dimensional control problem
\begin{equation}\label{Eq9}
\left\{
 \begin{aligned}
 &\min \frac{1}{2}\mathbb{E}\bigg\{\int_{0}^{T} {x}^T(Q - \Delta Q){x} + {u}^T{R}{u} dt+{x}^T(T)(G - \Delta G){x}(T)\bigg\},\\
 &s.t.\quad 
 \begin{aligned}
 & d{x} = ({A}{x} + {B}{u})dt + ({C}{x} + {D}{u})dW, \quad {x}(0) = 0.\\
 \end{aligned}
 \end{aligned}
 \right.
\end{equation} is (uniformly) convex,  then $\mathcal{J}^{(N)}_{soc}(u)$ is (uniformly) convex.
\end{proposition}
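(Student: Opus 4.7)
The plan is to reduce the high-dimensional convexity of $\mathcal{J}^{(N)}_{soc}$ to the scalar-state convexity of problem \eqref{Eq9} by combining the matrix inequality of Proposition \ref{prop2} with the decoupling that $F=\tilde{F}=0$ induces on the dynamics \eqref{Eq4}.

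First, I would pass to the purely quadratic second-variation form. Given $\mathbf{u},\mathbf{v}\in\mathcal{U}_c$ and $\lambda\in\mathbb{R}$, linearity of the dynamics gives $\mathbf{x}^{\mathbf{u}+\lambda\mathbf{v}}=\mathbf{x}^{\mathbf{u}}+\lambda\mathbf{y}$, where $\mathbf{y}$ solves \eqref{Eq4} with zero initial data and control $\mathbf{v}$. Expanding $\mathcal{J}^{(N)}_{soc}$ as a polynomial in $\lambda$ exhibits the Hessian form
\begin{equation*}
H(\mathbf{v})\triangleq\mathbb{E}\left\{\int_0^T \mathbf{y}^T\mathbf{Q}\mathbf{y}+\mathbf{v}^T\mathbf{R}\mathbf{v}\,dt+\mathbf{y}(T)^T\mathbf{G}\mathbf{y}(T)\right\}.
\end{equation*}
Convexity of $\mathcal{J}^{(N)}_{soc}$ amounts to $H(\mathbf{v})\ge 0$ for every admissible $\mathbf{v}$, while uniform convexity corresponds to $H(\mathbf{v})\ge \varepsilon\|\mathbf{v}\|_{L^2}^2$ for some $\varepsilon>0$.

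Next, Proposition \ref{prop2} delivers $\mathbf{Q}-\mathbf{Q}_2\ge 0$ and $\mathbf{G}-\mathbf{G}_2\ge 0$, so by the same computation as \eqref{Eq8},
\begin{equation*}
H(\mathbf{v})\ge \mathbb{E}\left\{\int_0^T \mathbf{y}^T\mathbf{Q}_2\mathbf{y}+\mathbf{v}^T\mathbf{R}\mathbf{v}\,dt+\mathbf{y}(T)^T\mathbf{G}_2\mathbf{y}(T)\right\}.
\end{equation*}
Because $F=\tilde{F}=0$, the matrix $\mathbf{A}$ in \eqref{Eq5} reduces to a block-diagonal matrix with diagonal blocks $A$, and each $\mathbf{C}_i$ retains only the single nonzero block $C$ in position $(i,i)$. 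Together with $\mathbf{y}(0)=0$, this forces $\mathbf{y}=(y_1^T,\ldots,y_N^T)^T$ where, independently for each $i$,
\begin{equation*}
dy_i=(Ay_i+Bv_i)\,dt+(Cy_i+Dv_i)\,dW_i,\quad y_i(0)=0.
\end{equation*}
Since $\mathbf{Q}_2$, $\mathbf{G}_2$, $\mathbf{R}$ are block-diagonal with identical blocks $Q-\Delta Q$, $G-\Delta G$, $R$, the quadratic form splits as a sum of $N$ identical low-dimensional quadratics, each being twice the objective of \eqref{Eq9} evaluated at $v_i$ with zero initial state.

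Finally, the hypothesis that \eqref{Eq9} is (uniformly) convex says precisely that each summand is nonnegative, and bounded below by $\varepsilon\,\mathbb{E}\int_0^T\|v_i\|^2 dt$ in the uniform case. Summing over $i=1,\ldots,N$ yields $H(\mathbf{v})\ge 0$ (respectively $H(\mathbf{v})\ge \varepsilon\|\mathbf{v}\|_{L^2}^2$), which is the required (uniform) convexity of $\mathcal{J}^{(N)}_{soc}$. The main obstacle is a bookkeeping one rather than an analytic one: one must check carefully that with $F=\tilde{F}=0$ and zero initial condition, the high-dimensional system \eqref{Eq4} truly decomposes into $N$ independent copies of the state equation in \eqref{Eq9} driven by distinct Brownian motions $W_i$, and that the linear and constant terms in $\mathcal{J}^{(N)}_{soc}$ contribute nothing to $H(\mathbf{v})$; a direct inspection of \eqref{Eq5}--\eqref{Eq7} settles both points.
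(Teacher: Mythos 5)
Your proposal is correct and follows essentially the same route as the paper: reduce (uniform) convexity to nonnegativity of the homogeneous zero-initial-state quadratic functional (which the paper cites from \cite{sly2016} and you re-derive by expanding in $\lambda$), lower-bound it via Proposition \ref{prop2} as in \eqref{Eq8}, and then use the block-diagonal decoupling of dynamics and weights under $F=\tilde F=0$ to split the bound into $N$ copies of \eqref{Eq9}. Your write-up just makes the decoupling step more explicit than the paper's one-line remark.
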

\begin{proof}
 According to the result of \cite{sly2016}, $\mathcal{J}^{(N)}_{soc}(u,\xi_0)$ is (uniformly) convex if $\widetilde{\mathcal{J}}^{(N)}_{soc}(u,0)\geq 0$ ($\geq \varepsilon\mathbb{E}\int_{0}^{T}\|\mathbf{u}\|^2dt$ for some constant $\varepsilon$) where
\begin{equation*}
\left\{
 \begin{aligned}
 &\widetilde{\mathcal{J}}^{(N)}_{soc}(u,0) \triangleq \frac{1}{2}\mathbb{E}\bigg\{\int_{0}^{T} \mathbf{x}^T\mathbf{Q}\mathbf{x} + \mathbf{u}^T\mathbf{R}\mathbf{u} dt +\mathbf{x}(T)^T\mathbf{G}\mathbf{x}(T)\bigg\}, \\
 &s.t.\quad
 \begin{aligned}
 & d\mathbf{x} = (\mathbf{A}\mathbf{x} + \mathbf{B}\mathbf{u})dt + \sum_{i = 1}^{N}(\mathbf{C}_i\mathbf{x} + \mathbf{D}_i\mathbf{u})dW_i, \quad \mathbf{x}(0) = 0.\\
 \end{aligned}
 \end{aligned}
 \right.
\end{equation*}
By (A\ref{A1})-(A\ref{A2}) and  \eqref{Eq8}, for any $\Delta Q \geq Q - \hat{Q}$ and $\Delta G \geq G - \hat{G}$, if
$\frac{1}{2}\mathbb{E}\big\{\int_{0}^{T} \mathbf{x}^T\mathbf{Q}_2\mathbf{x} + \mathbf{u}^T\mathbf{R}\mathbf{u} dt +\mathbf{x}(T)^T\mathbf{G}_2\mathbf{x}(T)\big\} \geq 0$ ($\geq \varepsilon\mathbb{E}\int_{0}^{T}\|\mathbf{u}\|^2dt$) holds, then the (uniform) convexity of $\mathcal{J}^{(N)}_{soc}(u,\xi_0)$ would follow. Moreover, $\mathbf{Q}_2$ is diagonal block matrix and the following problem is decouple.
\begin{equation}\label{Eq10}
\left\{
 \begin{aligned}
 &\min \frac{1}{2}\mathbb{E} \bigg\{\int_{0}^{T} \mathbf{x}^T\mathbf{Q}_2\mathbf{x} + \mathbf{u}^T\mathbf{R}\mathbf{u} dt +\mathbf{x}(T)^T\mathbf{G}_2\mathbf{x}(T)\bigg\},\\
 &s.t.\quad 
 \begin{aligned}
 & d\mathbf{x} = (\mathbf{A}\mathbf{x} + \mathbf{B}\mathbf{u})dt + \sum_{i = 1}^{N}(\mathbf{C}_i\mathbf{x} + \mathbf{D}_i\mathbf{u})dW_i, \quad \mathbf{x}(0) = 0.\\
 \end{aligned}
 \end{aligned}
 \right.
\end{equation}
Thus, the convexity of \eqref{Eq10} is equivalent to the convexity of \eqref{Eq9}, and $\frac{1}{2}\mathbb{E}\big\{\int_{0}^{T} \mathbf{x}^T\mathbf{Q}_2\mathbf{x} + \mathbf{u}^T\mathbf{R}\mathbf{u} dt +\mathbf{x}(T)^T\mathbf{G}_2\mathbf{x}(T)\big\} \geq 0$ ($\geq \varepsilon\mathbb{E}\int_{0}^{T}\|\mathbf{u}\|^2dt$) if \eqref{Eq9} is (uniformly) convex. Theorem \ref{prop3.3} is proved.
\end{proof}

 Note that the convexity of a low-dimensional control problem has been well studied in \cite{sly2016} and \cite{sy2014}. Thus, we will not further discuss the convexity of \eqref{Eq9} here.

Next, the second situation: $F,\tilde{F}\neq 0$ will be studied. {In this situation, for discussion simplicity, we also assume that $G\geq 0$}.
\subsubsection{For $G\geq0$, $F,  \tilde{F} \neq 0$}
In this situation, { the state of each agents will be influenced by the others (see \cite{wang2019social} where $F\neq 0$ is assumed).} It is inaccessible to obtain a decoupled low-dimensional problem due to the coupling of the state dynamics. To analyse $\widetilde{\mathcal{J}}^{(N)}_{soc}(u,0)$, we firstly estimate the $L^2$ norm of $\mathbf{x}$ which is given by \eqref{Eq10}. Applying It\^{o}'s formula to $\mathbf{x}$ and taking expectation, one can obtain
\begin{equation}\label{Eq11}
\resizebox{\textwidth}{!}{$
 \begin{aligned}
 & \mathbb{E}\|\mathbf{x}(t)\|^2 = \mathbb{E}\int_{0}^{t}\Big[\mathbf{x}^T\Big(\mathbf{A}+\mathbf{A}^T+\sum_{i = 1}^{N}\mathbf{C}_i^T\mathbf{C}_i\Big)\mathbf{x} + \mathbf{x}^T\Big(2\mathbf{B}+2\sum_{i = 1}^{N}\mathbf{C}_i^T\mathbf{D}_i\Big)\mathbf{u} + \mathbf{u}^T\sum_{i = 1}^{N}\mathbf{D}_i^T\mathbf{D}_i\mathbf{u}\Big]ds,\\
 \end{aligned}$}
\end{equation}
where
\begin{equation*}
\resizebox{\textwidth}{!}{$
 \begin{aligned}
 &\sum_{i = 1}^{N}\mathbf{C}_i^T\mathbf{C}_i = \left(\begin{smallmatrix}
 C^TC & 0 & \cdots & 0 \\
 0 & C^TC & \cdots & 0 \\
 \vdots & \vdots & \ddots &\vdots\\
 0 & 0 & \cdots & C^TC \\
 \end{smallmatrix}\right) + \frac{1}{N}\left(\begin{smallmatrix}
 (\tilde{F}+C)^T(\tilde{F}+C) & \cdots & (\tilde{F}+C)^T(\tilde{F}+C) \\
 \vdots & \ddots &\vdots\\
 (\tilde{F}+C)^T(\tilde{F}+C) & \cdots & (\tilde{F}+C)^T(\tilde{F}+C) \\
 \end{smallmatrix}\right) - \frac{1}{N}\left(\begin{smallmatrix}
 C^TC & \cdots & C^TC \\
 \vdots & \ddots &\vdots\\
 C^TC & \cdots & C^TC \\
 \end{smallmatrix}\right)\geq 0,\\
 &\mathbf{A}^T + \mathbf{A} = \left(\begin{smallmatrix}
 A^T + A & 0 & \cdots & 0 \\
 0 & A^T + A & \cdots & 0 \\
 \vdots & \vdots & \ddots &\vdots\\
 0 & 0 & \cdots & A^T + A \\
 \end{smallmatrix}\right) + \frac{1}{N}\left(\begin{smallmatrix}
 F^T+F & \cdots & F^T+F \\
 \vdots & \ddots &\vdots\\
 F^T+F & \cdots & F^T+F \\
 \end{smallmatrix}\right)\\
 &\hspace{1.4cm}\leq\left(\begin{smallmatrix}
 A^T + A & 0 & \cdots & 0 \\
 0 & A^T + A & \cdots & 0 \\
 \vdots & \vdots & \ddots &\vdots\\
 0 & 0 & \cdots & A^T + A \\
 \end{smallmatrix}\right) +\left(\begin{smallmatrix}
 \lambda_{\max}(F^T+F)I & \cdots & 0 \\
 \vdots & \ddots &\vdots\\
 0 & \cdots & \lambda_{\max}(F^T+F)I \\
 \end{smallmatrix}\right),\\
 &\sum_{i = 1}^{N}\mathbf{D}_i^T\mathbf{D}_i = \left(\begin{smallmatrix}
 D^TD & 0 & \cdots & 0 \\
 0 & D^TD & \cdots & 0 \\
 \vdots & \vdots & \ddots &\vdots\\
 0 & 0 & \cdots & D^TD \\
 \end{smallmatrix}\right), \ \
 \sum_{i = 1}^{N}\mathbf{C}_i^T\mathbf{D}_i = \frac{1}{N}\left(\begin{smallmatrix}
 \tilde{F}^TD & \cdots & \tilde{F}^TD  \\
 \vdots & \ddots & \vdots  \\
 \tilde{F}^TD & \cdots & \tilde{F}^TD  \\
 \end{smallmatrix}\right) + \left(\begin{smallmatrix}
 C^TD & 0 & \cdots & 0 \\
 0 & C^TD & \cdots & 0 \\
 \vdots & \vdots & \ddots & \vdots \\
 0 & 0 & \cdots & C^TD \\
 \end{smallmatrix}\right),\\
  \end{aligned}$}
\end{equation*}
\begin{equation*}\resizebox{\textwidth}{!}{$
\begin{aligned}
 &\sum_{i = 1}^{N}\mathbf{D}_i^T\mathbf{C}_i\sum_{i = 1}^{N}\mathbf{C}_i^T\mathbf{D}_i = \frac{1}{N}\left(\begin{smallmatrix}
 D^T \tilde{F} & \cdots &D^T \tilde{F} \\
 \vdots & \ddots & \vdots \\
 D^T \tilde{F} & \cdots & D^T \tilde{F} \\
 \end{smallmatrix}\right)\frac{1}{N}\left(\begin{smallmatrix}
 \tilde{F}^TD & \cdots & \tilde{F}^TD \\
 \vdots & \ddots & \vdots  \\
 \tilde{F}^TD & \cdots & \tilde{F}^TD  \\
 \end{smallmatrix}\right)
 + \left(\begin{smallmatrix}
 D^TC & 0 & \cdots & 0 \\
 0 & D^TC & \cdots & 0 \\
 \vdots & \vdots & \ddots & \vdots \\
 0 & 0 & \cdots & D^TC \\
 \end{smallmatrix}\right)\frac{1}{N}\left(\begin{smallmatrix}
 \tilde{F}^TD & \cdots & \tilde{F}^TD \\
 \vdots & \ddots & \vdots \\
 \tilde{F}^TD & \cdots & \tilde{F}^TD \\
 \end{smallmatrix}\right)\\
 &\hspace{0cm} + \frac{1}{N}\left(\begin{smallmatrix}
 D^T \tilde{F} & \cdots &D^T \tilde{F}  \\
 \vdots & \ddots & \vdots  \\
 D^T \tilde{F} & \cdots & D^T \tilde{F}  \\
 \end{smallmatrix}\right)\left(\begin{smallmatrix}
 C^TD & 0 & \cdots & 0 \\
 0 & C^TD & \cdots & 0 \\
 \vdots & \vdots & \ddots & \vdots \\
 0 & 0 & \cdots & C^TD \\
 \end{smallmatrix}\right)+ \left(\begin{smallmatrix}
 D^TC & 0 & \cdots & 0 \\
 0 & D^TC & \cdots & 0 \\
 \vdots & \vdots & \ddots & \vdots \\
 0 & 0 & \cdots & D^TC \\
 \end{smallmatrix}\right)\left(\begin{smallmatrix}
 C^TD & 0 & \cdots & 0 \\
 0 & C^TD & \cdots & 0 \\
 \vdots & \vdots & \ddots & \vdots \\
 0 & 0 & \cdots & C^TD \\
 \end{smallmatrix}\right)\\
 &\hspace{0cm}= \left(\begin{smallmatrix}
 D^TCC^TD & 0 & \cdots & 0 \\
 0 & D^TCC^TD & \cdots & 0 \\
 \vdots & \vdots & \ddots & \vdots \\
 0 & 0 & \cdots & D^TCC^TD \\
 \end{smallmatrix}\right)+ \frac{1}{N}\left(\begin{smallmatrix}
 D^T (\tilde{F}\tilde{F}^T + C\tilde{F}^T + \tilde{F}C^T)D & \cdots & D^T (\tilde{F}\tilde{F}^T + C\tilde{F}^T + \tilde{F}C^T)D \\
 \vdots & \ddots & \vdots \\
 D^T (\tilde{F}\tilde{F}^T + C\tilde{F}^T + \tilde{F}C^T)D & \cdots & D^T (\tilde{F}\tilde{F}^T + C\tilde{F}^T + \tilde{F}C^T)D \\
 \end{smallmatrix}\right).
 \end{aligned} $}
\end{equation*}
Moreover, to estimate $\mathbf{x}$, we still need the following result:
\begin{proposition}\label{prop3.4}
 Suppose $S$ is a real symmetric matrix, then for any real vector $x$,
 \[\lambda_{\min}(S)\|x\|^2 \leq x^TSx \leq \lambda_{\max}(S)\|x\|^2.\]
\end{proposition}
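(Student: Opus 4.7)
The plan is to invoke the spectral theorem for real symmetric matrices and reduce the quadratic form to a weighted sum of squares of the coordinates in an orthonormal eigenbasis. Since $S \in \mathbb{S}^n$, there exists an orthogonal matrix $U \in \mathbb{R}^{n\times n}$ (with $U^T U = U U^T = I$) and a diagonal matrix $\Lambda = \mathrm{diag}(\lambda_1,\ldots,\lambda_n)$ whose entries are the real eigenvalues of $S$, such that $S = U \Lambda U^T$.

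Given $x \in \mathbb{R}^n$, I would set $y = U^T x$. Orthogonality of $U$ immediately yields $\|y\|^2 = y^T y = x^T U U^T x = \|x\|^2$, and substitution gives the identity
\begin{equation*}
x^T S x \;=\; x^T U \Lambda U^T x \;=\; y^T \Lambda y \;=\; \sum_{i=1}^n \lambda_i\, y_i^2.
\end{equation*}
Now I would bracket each term by replacing $\lambda_i$ with the extreme eigenvalues: since $y_i^2 \geq 0$,
\begin{equation*}
\lambda_{\min}(S) \sum_{i=1}^n y_i^2 \;\leq\; \sum_{i=1}^n \lambda_i y_i^2 \;\leq\; \lambda_{\max}(S) \sum_{i=1}^n y_i^2,
\end{equation*}
and then use $\sum_i y_i^2 = \|y\|^2 = \|x\|^2$ to conclude $\lambda_{\min}(S)\|x\|^2 \leq x^T S x \leq \lambda_{\max}(S)\|x\|^2$.

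There is no genuine obstacle here: the statement is a classical textbook consequence of the spectral theorem, and the only substantive ingredient is the orthogonal diagonalizability of real symmetric matrices, which is standard. The short proof above is self-contained and suffices.
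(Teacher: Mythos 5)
Your proposal is correct and follows essentially the same route as the paper: orthogonal diagonalization $S = U\Lambda U^T$, the substitution $y = U^T x$ turning the quadratic form into $\sum_i \lambda_i y_i^2$, bounding each eigenvalue by $\lambda_{\min}(S)$ and $\lambda_{\max}(S)$, and using $\|y\| = \|x\|$ from orthogonality. No differences worth noting.
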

\begin{proof}
 $S\in\mathbb{S}^{n}$ is a real symmetric matrix. There exists an orthogonal matrix $P$ such that $P^{-1}SP = \Lambda$, where $\Lambda$ is diagonal and the diagonal elements are the eigenvalues $\lambda _{1}$, $\dots$, $\lambda_{n}$ of $S$. Hence, by letting $P^{T}x = y = (y_1^T,\cdots,y_n^T)^T$, we have
 \[x^TSx = x^TP\Lambda P^{T}x=\lambda_1y_1^2+\cdots+\lambda_ny_n^2.\]
 This implies that $\lambda_{\min}(S)\|y\|^2 \leq x^TSx \leq \lambda_{\max}(S)\|y\|^2$. Noting the orthogonality of $P$, we have $\|y\|^2 = x^TPP^Tx = \|x\|^2$ and the result of Proposition \ref{prop3.4} follows.
\end{proof}

Based on \eqref{Eq11} and Proposition \ref{prop3.4}, the estimation of $\mathbb{E}\|\mathbf{x}(t)\|^2$ and the convexity of $\mathcal{J}^{(N)}_{soc}(u,\xi_0)$ can be obtained as follows.
\begin{proposition}\label{prop3.5}
 Under \text{\rm\textbf{({A1})}-\textbf{({A2})}}, $G\geq0$, $F,  \tilde{F} \neq 0$ and $Q-\hat{Q} \geq 0$, if there exist some $\Delta Q\in\mathbb{S}^n$  such that $\Delta Q \geq Q - \hat{Q}$, $\lambda_{\min}(Q - \Delta Q)\leq 0$ and $Ke^{2KT}\lambda_{\min}(Q - \Delta Q) + \frac{1}{2}\lambda_{\min}(R) \geq0$ (or $\geq \varepsilon I$ for some $\varepsilon>0$), then $\mathcal{J}^{(N)}_{soc}(u,\xi_0)$ is convex (or uniformly convex). $K$ is given by
\begin{equation*}
\resizebox{\textwidth}{!}{$
 \begin{aligned}
 K = \max\Big\{&\big(\lambda_{\max}(A^T + A) + \lambda_{\max}(F^T+F)\big),\lambda_{\max}\big(C^TC + (\tilde{F}+C)^T(\tilde{F}+C)\big),\sqrt{\lambda_{\max}(B^TB)},\\
 &\sqrt{\lambda_{\max}\big(D^T (\tilde{F}\tilde{F}^T + C\tilde{F}^T + \tilde{F}C^T)D\big) + \lambda_{\max}\big(D^TCC^TD\big)},\lambda_{\max}(D^TD)\Big\}.
 \end{aligned}$}
\end{equation*}
Moreover, if $Q\leq 0$ is assumed, then condition $\lambda_{\min}(Q - \Delta Q)\leq 0$ would follow.
\end{proposition}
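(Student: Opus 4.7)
My plan is to follow the same strategy as Proposition \ref{prop3.3}: by the characterization from \cite{sly2016}, (uniform) convexity of $\mathcal{J}_{soc}^{(N)}(\cdot,\xi_0)$ is equivalent to $\widetilde{\mathcal{J}}_{soc}^{(N)}(u,0)\ge 0$ (resp.\ $\ge\varepsilon\mathbb{E}\int_0^T\|\mathbf{u}\|^2dt$). Because $F,\tilde F\neq 0$ couples the agents' dynamics, the clean decoupling of Proposition \ref{prop3.3} is unavailable; instead, I would bound $\mathbf{x}$ directly in $L^2$ via the It\^o identity \eqref{Eq11} and Gronwall's inequality, then feed the result back into the cost.

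I would first dispose of the terminal quadratic term. Using the block form of $\mathbf{G}$ from \eqref{Eq7} and the Cauchy--Schwarz inequality $\tfrac{1}{N}(\sum_i y_i)^T G(\sum_i y_i)\le\sum_i y_i^T G y_i$ (valid since $G\ge 0$), one obtains $\mathbf{x}(T)^T\mathbf{G}\mathbf{x}(T)\ge\tfrac{1}{N}(\sum_i x_i(T))^T\hat{G}(\sum_i x_i(T))\ge 0$, so this term can be dropped in a lower bound. For the running quadratic, Proposition \ref{prop2} applies under $\Delta Q\ge Q-\hat Q\ge 0$ and gives $\mathbf{Q}\ge\mathbf{Q}_2$; Proposition \ref{prop3.4} then yields $\mathbf{x}^T\mathbf{Q}_2\mathbf{x}\ge\lambda_{\min}(Q-\Delta Q)\|\mathbf{x}\|^2$ and $\mathbf{u}^T\mathbf{R}\mathbf{u}\ge\lambda_{\min}(R)\|\mathbf{u}\|^2$.

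The core step is an $N$-uniform $L^2$ estimate for $\mathbf{x}$. Starting from \eqref{Eq11} and the block identities displayed just below it for $\mathbf{A}+\mathbf{A}^T$, $\sum_i\mathbf{C}_i^T\mathbf{C}_i$, $\mathbf{B}$, $\sum_i\mathbf{C}_i^T\mathbf{D}_i$ and $\sum_i\mathbf{D}_i^T\mathbf{D}_i$, Proposition \ref{prop3.4} bounds each quadratic form by $K$ times a power of $\|\mathbf{x}\|$ or $\|\mathbf{u}\|$ (the cross term handled via $2ab\le a^2+b^2$). The definition of $K$ is calibrated precisely so that the $N$-dependent coupling pieces---all of which appear as $\tfrac{1}{N}$ times a block matrix of the form $\mathbf{1}\mathbf{1}^T\otimes M$, hence with spectral norm equal to $\|M\|$---are dominated $N$-uniformly by the $N$-free spectral quantities listed inside the $\max$. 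Combined with Gronwall, this produces $\mathbb{E}\|\mathbf{x}(t)\|^2\le Ke^{2KT}\mathbb{E}\int_0^T\|\mathbf{u}\|^2 ds$ uniformly in $t\in[0,T]$ and $N$.

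Assembling the pieces, the sign condition $\lambda_{\min}(Q-\Delta Q)\le 0$ lets me substitute the Gronwall upper bound into the $\|\mathbf{x}\|^2$ term with the inequality \emph{reversed}, yielding
\[
\widetilde{\mathcal{J}}^{(N)}_{soc}(u,0)\ \ge\ \Big(Ke^{2KT}\lambda_{\min}(Q-\Delta Q)+\tfrac{1}{2}\lambda_{\min}(R)\Big)\mathbb{E}\int_0^T\|\mathbf{u}\|^2 dt,
\]
from which both conclusions follow under the hypothesis. For the final remark, $\Delta Q\ge Q-\hat Q$ gives $Q-\Delta Q\le\hat Q=(\Gamma-I)^TQ(\Gamma-I)$ by Weyl monotonicity; if $Q\le 0$ then $\hat Q\le 0$, hence $\lambda_{\min}(Q-\Delta Q)\le\lambda_{\min}(\hat Q)\le 0$. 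The principal obstacle is the third step: verifying, through the explicit block identities after \eqref{Eq11}, that none of the $Nn\times Nn$ spectral norms entering the Gronwall constant blow up with $N$ and that all of them fit under a single $N$-independent $K$; once this structural bookkeeping is done, the remaining algebra is routine.
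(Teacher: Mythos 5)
Your proposal is correct and follows essentially the same route as the paper: reduce to $\widetilde{\mathcal{J}}^{(N)}_{soc}(u,0)\ge 0$, drop the terminal term using $G\ge 0$, lower-bound the running cost via Proposition \ref{prop2} and Proposition \ref{prop3.4}, and obtain the $N$-uniform Gronwall estimate $\mathbb{E}\|\mathbf{x}(t)\|^2\lesssim Ke^{2KT}\|\mathbf{u}\|_{L^2}^2$ from \eqref{Eq11} by observing that the $\tfrac1N$ rank-one-type coupling blocks have the same nonzero spectrum as their base matrices. The closing argument for $Q\le 0\Rightarrow\lambda_{\min}(Q-\Delta Q)\le 0$ via $\hat Q=(\Gamma-I)^TQ(\Gamma-I)\le 0$ also matches the paper's.
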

\begin{proof}
By Proposition \ref{prop3.4} and noting that the non-zero eigenvalues of $\frac{1}{N}\left(\begin{smallmatrix}
 A & \cdots & A \\
 \vdots & \vdots &\vdots\\
 A & \cdots & A \\
 \end{smallmatrix}\right)$, $\left(\begin{smallmatrix}
 A & & \mathbf{0} \\
 & \ddots &\\
 \mathbf{0} & & A \\
 \end{smallmatrix}\right)$ and $A$ are the same, we have
 \begin{equation*}
 \resizebox{\textwidth}{!}{$
 \left\{
 \begin{aligned}
 &\mathbf{x}^T\left(\sum_{i = 1}^{N}\mathbf{C}_i^T\mathbf{C}_i\right)\mathbf{x} \leq \lambda_{\max}\Big(C^TC + (\tilde{F}+C)^T(\tilde{F}+C)\Big)\|\mathbf{x}\|^2,\\
 &\mathbf{x}^T\left(\mathbf{A}^T + \mathbf{A}\right)\mathbf{x} \leq \big[\lambda_{\max}(A^T + A) + \lambda_{\max}(F^T+F)\big] \|\mathbf{x}\|^2,\\
 &2\mathbf{x}^T\mathbf{B}\mathbf{u} = \langle\mathbf{B}\mathbf{u},\mathbf{x}\rangle + \langle\mathbf{u},\mathbf{B}^T\mathbf{x}\rangle\leq \sqrt{\langle\mathbf{B}\mathbf{u},\mathbf{B}\mathbf{u}\rangle \langle\mathbf{x},\mathbf{x}\rangle} + \sqrt{\langle\mathbf{u},\mathbf{u}\rangle \langle\mathbf{B}^T\mathbf{x},\mathbf{B}^T\mathbf{x}\rangle}\\
 &\hspace{4.55cm}\leq 2\sqrt{\lambda_{\max}(B^TB)}\|\mathbf{u}\|\|\mathbf{x}\|,\\
 &2\mathbf{x}^T\sum_{i = 1}^{N}\mathbf{C}_i^T\mathbf{D}_i\mathbf{u} \leq 2\sqrt{\lambda_{\max}\bigg(\sum_{i = 1}^{N}\mathbf{D}_i^T\mathbf{C}_i\sum_{i = 1}^{N}\mathbf{C}_i^T\mathbf{D}_i\bigg)}\|\mathbf{u}\|\|\mathbf{x}\|\\
 &\hspace{2.7cm}\leq 2\sqrt{\lambda_{\max}\big(D^T (\tilde{F}\tilde{F}^T + C\tilde{F}^T + \tilde{F}C^T)D\big)+\lambda_{\max}\big(D^TCC^TD\big)}\|\mathbf{u}\|\|\mathbf{x}\|,\\
 &\mathbf{u}^T\sum_{i = 1}^{N}\mathbf{D}_i^T\mathbf{D}_i\mathbf{u} \leq \lambda_{\max}(D^TD)\|\mathbf{u}\|^2.
 \end{aligned}
 \right.$}
 \end{equation*}
 Thus, the estimation below would follows
\begin{equation*}
\resizebox{\textwidth}{!}{$
 \begin{aligned}
 & \mathbb{E}\|\mathbf{x}(t)\|^2 = \mathbb{E}\int_{0}^{t}\Big[\mathbf{x}^T\Big(\mathbf{A}+\mathbf{A}^T+\sum_{i = 1}^{N}\mathbf{C}_i^T\mathbf{C}_i\Big)\mathbf{x} + \mathbf{x}^T\Big(2\mathbf{B}+2\sum_{i = 1}^{N}\mathbf{C}_i^T\mathbf{D}_i\Big)\mathbf{u} + \mathbf{u}^T\sum_{i = 1}^{N}\mathbf{D}_i^T\mathbf{D}_i\mathbf{u}\Big]ds\\
 &\hspace{1.5cm}\leq K\mathbb{E}\int_{0}^{t}\Big[\|\mathbf{x}\|^2 + 2\|\mathbf{u}\|\|\mathbf{x}\| + \|\mathbf{u}\|^2\Big]ds\leq 2K\int_{0}^{t}\Big[\mathbb{E}\|\mathbf{x}\|^2 + \mathbb{E}\|\mathbf{u}\|^2\Big]ds,
 \end{aligned}$}
\end{equation*}
where
\begin{equation*}
\resizebox{\textwidth}{!}{$
 \begin{aligned}
 K = \max\Big\{&\big(\lambda_{\max}(A^T + A) + \lambda_{\max}(F^T+F)\big),\lambda_{\max}\big(C^TC + (\tilde{F}+C)^T(\tilde{F}+C)\big),\sqrt{\lambda_{\max}(B^TB)},\\
 &\sqrt{\lambda_{\max}\big(D^T (\tilde{F}\tilde{F}^T + C\tilde{F}^T + \tilde{F}C^T)D\big) + \lambda_{\max}\big(D^TCC^TD\big)},\lambda_{\max}(D^TD)\Big\}\geq 0.
 \end{aligned}
 $}
\end{equation*}
Further, by Gronwall's inequality, $\mathbb{E}\|\mathbf{x}(t)\|^2\leq 2Ke^{2Kt}\|\mathbf{u}\|_{L^2}^2$. Noting that $G \geq 0$ and $\lambda_{\min}(Q - \Delta Q)\leq 0$, the estimation of $\widetilde{\mathcal{J}}^{(N)}_{soc}(u,0)$ can be derived as
\begin{equation*}
 \begin{aligned}
 &\frac{1}{2}\mathbb{E}\bigg\{\int_{0}^{T} \mathbf{x}^T\mathbf{Q}\mathbf{x} + \mathbf{u}^T\mathbf{R}\mathbf{u} dt +\mathbf{x}(T)^T\mathbf{G}\mathbf{x}(T)\bigg\}\\
 \geq &\frac{1}{2}\mathbb{E}\bigg\{\int_{0}^{T} \mathbf{x}^T\mathbf{Q}_2\mathbf{x} + \mathbf{u}^T\mathbf{R}\mathbf{u} dt\bigg\} \geq   Ke^{2KT}\lambda_{\min}(Q - \Delta Q) \|\mathbf{u}\|_{L^2}^2 + \frac{1}{2}\lambda_{\min}(R)\|\mathbf{u}\|^2_{L^2}, \\
 \end{aligned}
\end{equation*}
and {Proposition} \ref{prop3.5} can be obtained.

Moreover, if we further assume that $Q\leq 0$, then $\hat{Q} = (\Gamma - I)^TQ(\Gamma - I)\leq 0$. Thus, it follows that $ 0\geq \hat{Q} \geq Q - \Delta Q$ and condition $\lambda_{\min}(Q - \Delta Q)\leq 0$ will follow as well.
\end{proof}
\begin{remark}
  $\mathbf{x}(t)$ is driven by $\mathbf{u}(t)$, and $\|\mathbf{x}\|^2_{L^2}$ can be bounded by the multiple of $\|\mathbf{u}\|^2_{L^2}$ via the Gronwall's inequality. Thus, $\widetilde{\mathcal{J}}^{(N)}_{soc}(u,0)$ can also be estimated by $\|\mathbf{u}\|^2_{L^2}$. Though the conditions $\lambda_{\min}(Q - \Delta Q)\leq 0$ (or $Q\leq 0$) and $Ke^{2KT}\lambda_{\min}(Q - \Delta Q) + \frac{1}{2}\lambda_{\min}(R) \geq0$ in Proposition \ref{prop3.5}, we see that $R\geq 0$ should hold to neutralize the negative definiteness of $Q$.  However, {the opposite is not true}. $\|\mathbf{u}\|^2_{L^2}$ can not be bounded by $\|\mathbf{x}\|^2_{L^2}$.
  Thus, if $R\leq 0$, we can not estimate how ``large" $Q$ should be to neutralize the negative definiteness of $R$.
\end{remark}
Through the discussion above, we have studied the (uniform) convexity of the social cost functional. For the sake of simplicity, we introduce the following uniform convexity assumption.
\begin{assumption}\label{A3}{\rm\textbf{(A3)}}
  $\mathcal{J}^{(N)}_{soc}(u)$ is uniformly convex w.r.t. $\mathbf{u}$.
\end{assumption}
Under \textbf{(A\ref{A1})}-\textbf{(A\ref{A3})}, $\mathcal{J}^{(N)}_{soc}(u)$ is uniformly convex, and consequently Problem \ref{P1}, \ref{P2} are both uniquely solvable. In what follows, the problem will be discussed under {such} assumption. {Next, we will apply variational method to analyse the social cost functional, and derive some decentralized controls based on person-by-person optimality principle}.

\section{Person-by-person Optimality}
{ Person-by-person optimality is a critical technique in mean-field social optima scheme, which has been applied in many recent social optima literatures (e.g., \cite{bjj2017}, \cite{wang2019social}, \cite{bj2017b}). {It is very different from mean-field games scheme, where the auxiliary control problem is usually derived directly by fixing the state-average, and this would lead to some ineffective control in social optima scheme}. Thus, in this section, under the person-by-person optimality principle, variation method will be used to analysis the mean-field approximation.}

Consider the optimal centralized control $\bar{u}=(\bar{u}_1,\bar{u}_2,\cdots,\bar{u}_N)$  of the large population system. $\bar{x}=(\bar{x}_1,\bar{x}_2,\cdots,\bar{x}_N)$ denotes the associated optimal state. {We perturb $\bar{u}_i$ and keep $\bar{u}_{ - i}\triangleq(\bar{u}_1 , \cdots , \bar{u}_{i - 1} , \bar{u}_{i + 1} , \cdots , \bar{u}_N)$ fixed. The perturbed control is denoted by $\bar{u}_i+\delta u_i$, while the associated perturbed state is denoted by $(\bar{x}_1+\delta x_1,\cdots,\bar{x}_N+\delta x_N)$}. The perturbation of the cost functional is denoted by $\delta \mathcal{J}_j\triangleq \mathcal{J}_j(u_j,\bar{u}_{ - i}) - \mathcal{J}_j(\bar{u}_j,\bar{u}_{ - i})$, for $j = 1,\cdots,N$. Thus, the dynamic  of $\mathcal{A}_i$ state variation is
\begin{equation*}
 \begin{aligned}
 & d\delta x_i = (A\delta x_i + B\delta {u}_i + F\delta x^{(N)})dt + (C\delta x_i + D\delta {u}_i + \tilde{F}\delta x^{(N)})dW_i, \quad \delta x_i(0) = 0, \\
 \end{aligned}
\end{equation*}
and the dynamics of  the others' variations  are
\begin{equation}\label{Eq12}
 \begin{aligned}
 & d\delta {x}_j = (A\delta {x}_j + F\delta x^{(N)})dt + (C\delta {x}_j + \tilde{F}\delta x^{(N)})dW_j, \quad \delta {x}_j(0) = 0. \\
 \end{aligned}
\end{equation}
{ The summation of \eqref{Eq12} yield that}
\begin{equation}\label{Eq12.5}
\resizebox{\textwidth}{!}{$
 \left\{
 \begin{aligned}
 & d \delta x^{( - i)} = \Big[A\delta x^{( - i)} + F\frac{N - 1}{N}(\delta x^{( - i)} + \delta x_i)\Big]dt + \sum_{j\neq i}\Big[C\delta x_{j} + \frac{\tilde{F}}{N}(\delta x^{( - i)} + \delta x_i)\Big]dW_j, \\
 & \delta x^{ (- i)}(0) = 0, \\
 \end{aligned}
 \right.$}
\end{equation}
where $\delta x^{( - i)}\triangleq\sum_{j\neq i}\delta{x}_j$.
Further, by some elementary calculations, one can  obtain the variation of $\mathcal{A}_i$ cost functional
\begin{equation}\label{Eq13}
 \begin{aligned}
 \delta \mathcal J_i = & \mathbb{E}\int_{0}^{T}\langle Q(\bar{x}_i - \Gamma \bar{x}^{(N)} - \eta), \delta x_i - \Gamma \delta x^{(N)}\rangle + \langle R\bar{u}_i, \delta u_i\rangle dt\\
 &+\mathbb{E}\langle G(\bar{x}_i(T) -\bar{\Gamma}\bar{x}^{(N)}(T) - \bar{\eta}),\delta x_i(T) - \bar{\Gamma} \delta x^{(N)}(T)\rangle,
 \end{aligned}
\end{equation}
and correspondingly, for $j\neq i$,
\begin{equation}\label{Eq14}
 \begin{aligned}
 \delta \mathcal J_j = &\mathbb{E}\int_{0}^{T}\langle Q(\bar{x}_j - \Gamma \bar{x}^{(N)} - \eta), \delta x_j - \Gamma \delta x^{(N)}\rangle dt\\
 &+\mathbb{E}\langle G(\bar{x}_i(T) -\bar{\Gamma}\bar{x}^{(N)}(T) - \bar{\eta}),\delta x_j(T) - \bar{\Gamma} \delta x^{(N)}(T)\rangle.
 \end{aligned}
\end{equation}
Hence, by combining \eqref{Eq13} and \eqref{Eq14}, the variation of the social cost functional satisfies
\begin{equation*}\resizebox{\textwidth}{!}{$
 \begin{aligned}
 &\delta \mathcal J_{soc}^{(N)} \triangleq \delta \mathcal J_i + \sum_{j\neq i}\delta \mathcal J_j \\
  = & \mathbb{E}\bigg\{\int_{0}^{T}\langle Q(\bar{x}_i - \Gamma \bar{x}^{(N)} - \eta), \delta x_i\rangle - \langle \Gamma^TQ(\bar{x}_i - \Gamma \bar{x}^{(N)} - \eta), \delta x^{(N)}\rangle + \sum_{j\neq i}\langle Q(\bar{x}_j- \Gamma \bar{x}^{(N)}- \eta),\\
 & \delta x_j\rangle - \sum_{j\neq i}\langle \Gamma^TQ(\bar{x}_j - \Gamma \bar{x}^{(N)} - \eta), \delta x^{(N)}\rangle+ \langle R\bar{u}_i, \delta u_i\rangle dt+ \langle G(\bar{x}_i(T)-\bar{\Gamma}\bar{x}^{(N)}(T) - \bar{\eta}),\\
 &\delta x_i(T)\rangle - \langle \bar{\Gamma}^TG(\bar{x}_i(T)-\bar{\Gamma}\bar{x}^{(N)}(T) - \bar{\eta}), \delta x^{(N)}(T)\rangle + \sum_{i\neq j}^{}\langle G(\bar{x}_j(T)-\bar{\Gamma}\bar{x}^{(N)}(T) - \bar{\eta}),\delta x_j(T)\rangle\\
 &- \sum_{i\neq j}^{} \langle \bar{\Gamma}^TG(\bar{x}_j(T)-\bar{\Gamma}\bar{x}^{(N)}(T) - \bar{\eta}), \delta x^{(N)}(T)\rangle \bigg\}.
 \end{aligned} $}
 \end{equation*}
We use mean-field term $\hat{x}$ to replace $\bar{x}^{(N)}$. Thus, we have
 \begin{equation*}\resizebox{\textwidth}{!}{$
 \begin{aligned}
&\delta \mathcal J_{soc}^{(N)}
 = \mathbb{E}\bigg\{\int_{0}^{T}\langle Q(\bar{x}_i - \Gamma \hat{x} - \eta), \delta x_i\rangle + \sum_{j\neq i}\frac{1}{N}\langle Q(\bar{x}_j - \Gamma \hat{x} - \eta), N\delta x_j\rangle - \langle \frac{1}{N} \sum_{j\neq i} \Gamma^TQ(\bar{x}_j\\
 & - \Gamma \hat{x} - \eta), \delta x_i + \delta x^{( - i)}\rangle + \langle R\bar{u}_i, \delta u_i\rangle dt
  +\langle G(\bar{x}_i(T)-\bar{\Gamma}\hat{x}(T) - \bar{\eta}),\delta x_i(T)\rangle +\sum_{i\neq j}^{}\frac{1}{N}\langle G(\bar{x}_j(T)\\
 &-\bar{\Gamma}\hat{x}(T) - \bar{\eta}),N\delta x_j(T)\rangle
  - \sum_{i\neq j}^{}\frac{1}{N} \langle \bar{\Gamma}^TG(\bar{x}_j(T)-\bar{\Gamma}\hat{x}(T) - \bar{\eta}), \delta x_i(T) + \delta x^{( - i)}(T)\rangle+ \varepsilon_1 + \varepsilon_2\bigg\}, \\
 \end{aligned} $}
\end{equation*}
where
\begin{equation}\label{Eq15}\resizebox{\textwidth}{!}{$
\left\{
 \begin{aligned}
 &\varepsilon_1 = \mathbb{E}\bigg\{\int_{0}^{T}\langle(\Gamma^TQ \Gamma -Q \Gamma) (\bar{x}^{(N)} - \hat{x}) , N\delta x^{(N)}\rangle dt
 +\langle(\bar{\Gamma}^TQ \bar{\Gamma} -Q \bar{\Gamma}) (\bar{x}^{(N)}(T) - \hat{x}(T)) , N\delta x^{(N)}(T)\rangle\bigg\},\\
 &\varepsilon_2 =\mathbb{E}\bigg\{\int_{0}^{T}-\langle \Gamma^TQ(\bar{x}_i - \Gamma\bar{x}^{(N)} - \eta), \delta x^{(N)}\rangle dt
 - \langle \bar{\Gamma}^TG(\bar{x}_i(T)-\bar{\Gamma}\hat{x}(T) - \bar{\eta}), \delta x^{(N)}(T)\rangle\bigg\}.\\
 \end{aligned}
 \right. $}
\end{equation}
For the next step, we introduce limiting processes $x^{**}$ and $x_j^{*}$ satisfying
\begin{equation}\label{Eq15.5}
 \left\{
 \begin{aligned}
 & dx^{**} = (Ax^{**} + F\delta x_i + Fx^{**})dt,\quad x^{**}(0) = 0, \\
 & dx_j^{*} = (Ax_j^{*} + F\delta x_i + Fx^{**})dt + (Cx_j^{*} + \tilde{F}\delta x_i + \tilde{F}x^{**})dW_j,\quad x_j^{*}(0) = 0,
 \end{aligned}
 \right.
\end{equation}
to substitute $\delta x^{( - i)}$ and $N\delta x_j$.
This implies
\begin{equation}\label{Eq16}
\resizebox{\textwidth}{!}{$
 \begin{aligned}
 & \delta \mathcal J_{soc}^{(N)}
 = \mathbb{E}\bigg\{\int_{0}^{T}\langle Q(\bar{x}_i - \Gamma \hat{x} - \eta), \delta x_i\rangle + \frac{1}{N}\sum_{j\neq i}\langle Q(\bar{x}_j - \Gamma \hat{x} - \eta),  x_j^{*}\rangle- \langle \Gamma^TQ((I - \Gamma) \hat{x} - \eta),\delta x_i\rangle \\
 & - \langle \Gamma^TQ(\hat{x} - \Gamma \hat{x} - \eta), x^{**} \rangle +\langle R\bar{u}_i, \delta u_i\rangle dt+\langle G(\bar{x}_i(T)-\bar{\Gamma}\hat{x}(T) - \bar{\eta}),\delta x_i(T)\rangle +\frac{1}{N}\sum_{j\neq i}\langle G(\bar{x}_j(T)\\
 &-\bar{\Gamma}\hat{x}(T) - \bar{\eta}), x_j^{*}(T)\rangle- \langle \bar{\Gamma}^TG((I-\bar{\Gamma})\hat{x}(T) - \bar{\eta}), \delta x_i(T)\rangle -\langle \bar{\Gamma}^TG((I-\bar{\Gamma})\hat{x}(T) - \bar{\eta}), x^{**}(T)\rangle  + \sum_{i=1}^{4}\varepsilon_i\bigg\},
 \end{aligned}$}
\end{equation}
where
\begin{equation}\label{Eq17}
\resizebox{\textwidth}{!}{$
\begin{aligned}
&\varepsilon_3 = \mathbb{E}\bigg\{\int_{0}^{T} \frac{1}{N}\sum_{j\neq i}\langle Q(\bar{x}_j - \Gamma \hat{x} - \eta), N\delta x_j - x_j^{*}\rangle - \frac{1}{N}\sum_{j\neq i}\langle \Gamma^TQ(\bar{x}_j - \Gamma \hat{x} - \eta),\delta x^{( - i)} - x^{**} \rangle dt\\
  & + \frac{1}{N}\sum_{j\neq i}[\langle G(\bar{x}_j(T) - \bar{\Gamma} \hat{x}(T) - \bar{\eta}), N\delta x_j(T) - x_j^{*}(T)\rangle
  - \langle \bar{\Gamma}^TG(\bar{x}_j(T) - \bar{\Gamma} \hat{x}(T) - \bar{\eta}),\delta x^{( - i)}(T) - x^{**}(T) \rangle]\bigg\},\\
&\varepsilon_4 = \mathbb{E}\bigg\{\int_{0}^{T} - \Big\langle\Gamma^TQ\Big(\frac{\sum_{j\neq i}\bar{x}_j}{N} - \hat{x}\Big) , \sum_{i=1}^{N}\delta x_i\Big\rangle dt  - \Big\langle\bar{\Gamma}^TG\Big(\frac{\sum_{j\neq i}\bar{x}_j(T)}{N} - \hat{x}(T)\Big) , \sum_{i=1}^{N}\delta x_i(T)\Big\rangle\bigg\}.
\end{aligned}$}
\end{equation}
$\varepsilon_1$-$\varepsilon_4$ are actually $o(1)$ order and the rigorous proof will show in Section 6.
\begin{remark}
{ Because of $F, \tilde{F}, C \neq 0$, the limiting processes $x^{**}$ and $x_j^{*}$ driven by \eqref{Eq15.5} are stochastic,  and such feature will bring many technical difficulties. Mainly, when using dual method in the next step to substitute $x^{**}$ and $x_j^{*}$, the processes average will enter the drift term of the dual process.  This makes the dual process complicated and also bring difficulties in error estimation.

  By contrast, in some previous social optima works,  the limiting processes $x^{**}$, $x_j^{*}$ are usually deterministic or even unnecessary.
  If $ \tilde{F} = C = 0$ (e.g., \cite{bjj2017}), then the dynamics of $\delta x^{( - i)}$ \eqref{Eq12.5} and $ \delta x_j$ \eqref{Eq12} becomes}
\begin{equation*}
 \left\{
 \begin{aligned}
& d\delta {x}_j = (A\delta {x}_j + \frac{F}{N}\delta x^{i} + \frac{F}{N}\delta x^{(-i)})dt, \quad \delta {x}_j(0) = 0, \\
 & d \delta x^{( - i)} = \Big[A\delta x^{( - i)} + F\frac{N - 1}{N}(\delta x^{( - i)} + \delta x_i)\Big]dt,\quad \delta x^{ (- i)}(0) = 0. \\
 \end{aligned}
 \right.
\end{equation*}
Clearly, $\delta {x}_j$ and $\delta x^{( - i)}$ are all deterministic and so are $x^{**}$, $x_j^{*}$. {Besides, if $F = \tilde{F} = 0$ }(e.g., \cite{hcm2012}), then $\delta x^{( - i)}$ and $ \delta x_j$ becomes
\begin{equation*}
 \left\{
 \begin{aligned}
 & d\delta {x}_j = A\delta {x}_jdt + C\delta {x}_jdW_j, \quad \delta {x}_j(0) = 0, \\
 & d \delta x^{( - i)} = A\delta x^{( - i)} dt + \sum_{j\neq i} C\delta x_{j} dW_j, \quad \delta x^{ (- i)}(0) = 0. \\
 \end{aligned}
 \right.
\end{equation*}
By the homogeneity, $\delta {x}_j = \delta x^{( - i)} = 0$ and no limiting approximation is needed.
\end{remark}
Next, we will substitute  $x^{**}$ and $x_j^{*}$ by dual method.  It is very important to construct an auxiliary control problem for investigating decentralized control in socially optimal problem. An auxiliary control problem is usually a standard LQ control problem (see \cite{hcm2012}, \cite{bjj2017}). Since \eqref{Eq16} contains $x^{**}$ and $x_j^{*}$, we have to use a duality procedure (see \cite[Chapter 3]{yz1999}) to break away $\delta \mathcal J_{soc}^{(N)}$ from the dependence on $x^{**}$ and $x_j^{*}$. To this end, we introduce the following two auxiliary equations
\begin{equation}\label{Eq18}
 \left\{
 \begin{aligned}
 & dy_1^j = - [ A^Ty_1^j + C^T\beta_1^j+Q(\bar{x}_j - \Gamma \hat{x} - \eta)]dt + \beta_1^jdW_j + \sum_{j^{'}\neq j}\beta_1^{j'}dW_{j'}, \\
 & dy_2 = - [(A + F)^Ty_2+ F^T\mathbb{E}y_1^j + \tilde{F}^T\mathbb{E}\beta_1^j -\Gamma^TQ(\hat{x} - \Gamma \hat{x} - \eta) ]dt,\\
 &y_1^j(T) = G(\bar{x}_j(T)-\bar{\Gamma}\hat{x}(T) - \bar{\eta}),\quad y_2(T) = -\bar{\Gamma}^TG((I- \bar{\Gamma}) \hat{x}(T) - \bar{\eta}).
 \end{aligned}
 \right.
\end{equation}
By applying It\^{o} formula, we have the following duality relations
\begin{equation}\label{Eq19}
 \begin{aligned}
  &\mathbb{E}\langle G(\bar{x}_j(T)-\bar{\Gamma}\hat{x}(T) - \bar{\eta}), x_j^{*}(T)\rangle
  = \mathbb{E}\langle y_1^j(T),x_j^{*}(T)\rangle - \mathbb{E}\langle y_1^j(0),x_j^{*}(0)\rangle \\
  =& \mathbb{E}\int_{0}^{T}\langle - Q(\bar{x}_j - \Gamma \hat{x} - \eta), x_j^{*}\rangle + \langle \tilde{F}^T\beta_1^j + F^Ty_1^j, x^{**}\rangle + \langle \tilde{F}^T\beta_1^j + F^Ty_1^j, \delta x_i \rangle dt,
 \end{aligned}
\end{equation}
and
\begin{equation}\label{Eq20}
 \begin{aligned}
 &\mathbb{E}\langle -\bar{\Gamma}^TG((I- \bar{\Gamma}) \hat{x}(T) - \bar{\eta}), x^{**}(T)\rangle
 = \mathbb{E}\langle y_2(T),x^{**}(T)\rangle - \mathbb{E}\langle y_2(0),x^{**}(0)\rangle\\
 =& \mathbb{E}\int_{0}^{T}\langle \Gamma^TQ(\hat{x} - \Gamma \hat{x} - \eta) - F^T\mathbb{E}y_1^j - \tilde{F}^T\mathbb{E}\beta_1^j, x^{**}\rangle + \langle F^Ty_2, \delta x_i\rangle dt. \\
 \end{aligned}
\end{equation}
Combining \eqref{Eq16}, \eqref{Eq19} and \eqref{Eq20},
\begin{equation}\label{Eq21}\resizebox{\textwidth}{!}{$
 \begin{aligned}
 & \delta \mathcal J_{soc}^{(N)} = \mathbb E\int_{0}^{T} \langle Q\bar{x}_i, \delta x_i\rangle + \langle R\bar{u}_i, \delta u_i\rangle - \langle Q(\Gamma \hat{x} + \eta) + \Gamma^TQ((I - \Gamma) \hat{x} - \eta)- F^Ty_2 - \tilde{F}^T\mathbb{E}\beta_1^j\\
 & - F^T\mathbb{E}y_1^j, \delta x_i\rangle dt+\langle G\bar{x}_i(T),\delta x_i(T)\rangle
 -\langle G(\bar{\Gamma}\hat{x}(T) + \bar{\eta})+ \bar{\Gamma}^TG((I-\bar{\Gamma})\hat{x}(T) - \bar{\eta}), \delta x_i(T)\rangle + \sum_{i=1}^{5}\varepsilon_i,
 \end{aligned} $}
\end{equation}
where
\begin{equation}\label{Eq22}
  \varepsilon_5 =  \mathbb E\int_{0}^{T} \bigg\langle \tilde{F}^T\bigg(\mathbb{E}\beta_1^j - \frac{\sum_{j\neq i}^{N}\beta_1^j}{N}\bigg) + F^T\bigg(\mathbb{E}y_1^j - \frac{\sum_{j\neq i}^{N}y_1^j}{N}\bigg), \delta x_i\bigg\rangle dt.
\end{equation}
\begin{remark}
{In the discussion above, we introduce $N+1$ adjoint processes to break away $\delta \mathcal J_{soc}^{(N)}$ from the dependence on $x^{**}$ and $x_j^{*}$. This difficulty is brought by $F, \tilde{F}\neq 0$. By contrast, if  $F = \tilde{F} = 0$ (e.g., \cite{hcm2012}), then \eqref{Eq21} becomes
  \begin{equation*}
  \resizebox{\textwidth}{!}{$
 \begin{aligned}
 & \delta \mathcal J_{soc}^{(N)} = \mathbb E\int_{0}^{T} \langle Q\bar{x}_i, \delta x_i\rangle + \langle R\bar{u}_i, \delta u_i\rangle - \langle Q(\Gamma \hat{x} + \eta) + \Gamma^TQ((I - \Gamma) \hat{x} - \eta), \delta x_i\rangle dt\\
 & \hspace{2.3cm}+\langle G\bar{x}_i(T),\delta x_i(T)\rangle-\langle G(\bar{\Gamma}\hat{x}(T) + \bar{\eta})+ \bar{\Gamma}^TG((I-\bar{\Gamma})\hat{x}(T) - \bar{\eta}), \delta x_i(T)\rangle +  \varepsilon.
 \end{aligned}$}
\end{equation*}
Clearly, no additional adjoint process is needed to derive the auxiliary problem.

Moreover, due to $F, \tilde{F} \neq 0$, $\varepsilon_5$ is the difference between the average {and  expectation of $y_1^j$, $\beta_1^j$}. Generally, the dynamics of adapted terms $\beta_1^j$ is inaccessible. To estimate it, some decoupling method is applied via two Lyapunov equations in Section 6.}
\end{remark}

Therefore, by using mean-field term $\hat{y}_2$, $\hat{y}_1$, $\hat{\beta}_1$ to replace $y_2$, $\mathbb{E}y_1^j$, $\mathbb{E}\beta_1^j$ respectively,  { the variation of decentralized auxiliary cost functional  $\delta J_i$ yield that}
\begin{equation}\label{Eq23}\resizebox{\textwidth}{!}{$
 \begin{aligned}
 & \delta J_i = \mathbb{E}\int_{0}^{T} \langle Q\bar{\alpha}_i, \delta \alpha_i\rangle + \langle R\bar{v}_i, \delta v_i\rangle- \langle Q(\Gamma \hat{x} + \eta) + \Gamma^TQ[(I - \Gamma) \hat{x} - \eta]- F^T\hat{y}_2 - \tilde{F}^T \hat{\beta}_1\\
 & - F^T\hat{y}_1, \delta \alpha_i\rangle dt
 + \langle G\bar{\alpha}_i(T), \delta \alpha_i(T)\rangle-\langle G(\bar{\Gamma} \hat{x}(T) + \bar{\eta}) + \bar{\Gamma}^TG[(I - \bar{\Gamma}) \hat{x}(T) - \bar{\eta}],\delta \alpha_i(T)\rangle,
 \end{aligned} $}
\end{equation}
and
\begin{equation*}
 \begin{aligned}
  &\delta \mathcal J_{soc}^{(N)}  =  \delta J_i + \sum_{i=1}^{6} \varepsilon_i,
  \end{aligned}
\end{equation*}
where
\begin{equation}\label{Eq24}\resizebox{\textwidth}{!}{$
 \begin{aligned}
  &\varepsilon_6 = \mathbb{E}\int_{0}^{T} \langle Q(\bar{x}_i - \bar{\alpha}_i), \delta x_i\rangle \!+\!  \langle Q \bar{\alpha}_i, \delta x_i - \delta \alpha_i\rangle \!+\! \langle R(\bar{u}_i - \bar{v}_i), \delta u_i\rangle \!+\! \langle R\bar{v}_i, \delta u_i-\delta v_i\rangle \\
  & + \langle  F^T(y_2-\hat{y}_2) + \tilde{F}^T(\mathbb{E}\beta_1^j-\hat{\beta}_1) + F^T(\mathbb{E}y_1^j-\hat{y}_i), \delta x_i\rangle+ \langle  F^T\hat{y}_2 + \tilde{F}^T \hat{\beta}_1 + F^T\hat{y}_1- Q\Gamma \hat{x}\\
  & - Q\eta - \Gamma^TQ[(I - \Gamma) \hat{x} - \eta],  \delta x_i - \delta \alpha_i\rangle dt+\langle G(\bar{x}_i(T) - \bar{\alpha}_i(T)), \delta x_i(T)\rangle+  \langle G \bar{\alpha}_i(T), \\
  & \delta x_i(T) - \delta \alpha_i(T)\rangle
  -\langle G(\bar{\Gamma} \hat{x}(T) + \bar{\eta}) + \bar{\Gamma}^TG[(I - \bar{\Gamma}) \hat{x}(T) - \bar{\eta}],\delta x_i(T) - \delta \alpha_i(T)\rangle.
  \end{aligned} $}
\end{equation}
Similarly, $\varepsilon_5$, $\varepsilon_6$ are also $o(1)$ order. Thus if $\delta J_i = 0$, we have $\delta \mathcal J_{soc}^{(N)}\longrightarrow0$.

\section{Decentralized control design}
\subsection{Auxiliary problem}
Motivated by equation \eqref{Eq23}, for
achieving $\delta J_i = 0$, we introduce the following auxiliary control problem:
\begin{problem}\label{P3}
Minimize $J_i(v_i)$ over $v_i\in\mathcal{U}_i$, where
\begin{equation*}
 \left\{
 \begin{aligned}
 & J_i\triangleq\frac{1}{2}\mathbb{E}\bigg\{\int_{0}^{T} \|\alpha_i\|_Q^2 + \|v_i\|^2_R + 2\langle q_1, \alpha_i\rangle dt+\|\alpha_i(T)\|_G^2+2\langle q_2, \alpha_i(T)\rangle\bigg\} , \\
 & d{\alpha}_i = (A{\alpha}_i + B{v}_i + F\hat{x})dt + (C{\alpha}_i + D{v}_i + \tilde{F}\hat{x})dW_i,\quad{\alpha}_i(0) = \xi_0, \\
 \end{aligned}
 \right.
\end{equation*}
and
\[q_1\triangleq - Q(\Gamma \hat{x} + \eta) - \Gamma^TQ[(I - \Gamma) \hat{x} - \eta] + F^T\hat y_2 + F^T\hat{y}_1 + \tilde{F}^T\hat\beta_1,\]
\[q_2\triangleq - G(\bar{\Gamma} \hat{x}(T) + \bar{\eta}) - \bar{\Gamma}^TG[(I - \bar{\Gamma}) \hat{x}(T) - \bar{\eta}].\]
\end{problem}
This is a standard LQ control problem, where the mean-field terms $\hat{x}$, $\hat y_2$, $\hat y_1$, $\hat\beta_1$ will be determined by the CC system in Section 5.2. {By} referring \cite{yz1999}, we have the following result.
\begin{lemma}\label{Lem5.1}
Under the {\rm\textbf{(A\ref{A1})}-\textbf{(A\ref{A3})}}, the following Riccati equation
 \begin{equation}\label{Eq25}
 \resizebox{\textwidth}{!}{$
 \left\{
 \begin{aligned}
 &\dot{P} + PA + A^TP + C^TPC + Q - (PB + C^TPD )(R + D^TPD)^{ - 1}(B^TP + D^TPC) = 0,\\
 &P(T) = G.
 \end{aligned}
 \right.$}
 \end{equation}
 admits a unique regular solution such that $(R + D^TPD)\gg 0$, and for any given $\hat{x}$, $\hat y_2$, $\hat y_1$, $\hat\beta_1 \in L^1(s,T;\mathbb R^{n})$, the auxiliary control problem ({Problem \ref{P3}}) admits a unique feedback form optimal control $\bar{v}_i = \Theta_1\bar{\alpha}_i + \Theta_2$, where
 \begin{equation}\label{Eq26}
 \resizebox{\textwidth}{!}{$
 \begin{aligned}
 & \Theta_1 \triangleq - (R + D^TPD)^{ - 1}( B^TP + D^TPC),\quad   \Theta_2 \triangleq - (R + D^TPD)^{ - 1}( B^T\varphi + D^TP\tilde{F}\hat{x}),
 \end{aligned}$}
 \end{equation}
 and $P$ satisfies \eqref{Eq25} while $\varphi$ is the unique solution of
 \begin{equation}\label{Eq27}
 \left\{
 \begin{aligned}
 & \dot{\varphi} + [A^T - (PB + C^TPD)(R + D^TPD)^{ - 1}B^T]\varphi \\
 & \hspace{3mm} - [(PB + C^TPD)(R + D^TPD)^{ - 1}D^T - C^T]P\tilde{F}\hat{x} + PF\hat{x} + q_1 = 0, \\
 & \varphi(T) = q_2.
 \end{aligned}
 \right.
 \end{equation}
 $\bar{\alpha}_i$ is the corresponding optimal auxiliary state satisfying
 \begin{equation}\label{Eq28}
 \left\{
 \begin{aligned}
 & d\bar{\alpha}_i = \big[(A + B\Theta_1) \bar{\alpha}_i + B\Theta_2 + F\hat{x}\big]dt + \big[(C + D\Theta_1) \bar{\alpha}_i + D\Theta_2 + \tilde{F}\hat{x}\big]dW_i,\\
 &\bar{\alpha}_i(0) = \xi_0. \\
 \end{aligned}
 \right.
\end{equation}
\end{lemma}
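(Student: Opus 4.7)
The plan is to treat Problem \ref{P3} as a single-agent stochastic LQ problem with indefinite weights, deterministic inhomogeneous coefficients $F\hat{x}$, $\tilde{F}\hat{x}$ in the state dynamics, and deterministic linear sources $q_1$, $q_2$ in the cost. Because $\hat{x}, \hat{y}_1, \hat{y}_2, \hat{\beta}_1$ are given as $L^1(s,T;\mathbb{R}^n)$-functions, the problem falls within the framework of \cite{sly2016}, and the natural route is completion of squares.

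The first step is to establish Riccati solvability. Uniform convexity of $\mathcal{J}^{(N)}_{soc}$ in the joint control (assumption \textbf{(A\ref{A3})}) implies uniform convexity of $J_i$ in $v_i$: the homogeneous part of $J_i$ (setting $\hat{x}=0$, $q_1=q_2=0$, $\xi_0=0$) coincides with a single-agent block of the social quadratic form obtained by perturbing only $u_i$, so positivity of the $N$-dimensional Hessian forces a uniform lower bound $J_i^{0}(v_i)\geq \delta\,\mathbb{E}\int_0^T\|v_i\|^2\,dt$ for some $\delta>0$. By the indefinite LQ theory (see \cite{sly2016}), this uniform convexity is equivalent to the Riccati equation \eqref{Eq25} admitting a unique regular solution $P\in C([0,T];\mathbb{S}^n)$ with $R+D^TPD\gg 0$ on $[0,T]$.

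Given such $P$, equation \eqref{Eq27} is a linear backward ODE with bounded coefficients (since $(R+D^TPD)^{-1}$ is bounded) and a bounded deterministic source, so it admits a unique solution $\varphi\in C([0,T];\mathbb{R}^n)$. Applying It\^o's formula to $\langle P\alpha_i,\alpha_i\rangle + 2\langle\varphi,\alpha_i\rangle$ along any admissible trajectory and taking expectation, the Riccati equation absorbs the $\alpha_i$-quadratic terms, the $\varphi$-equation absorbs the $\alpha_i$-linear terms (including those generated by $F\hat{x}$, $\tilde{F}\hat{x}$ via the It\^o correction $C^TP\tilde{F}\hat{x}$), and one obtains the representation
\begin{equation*}
 J_i(v_i) \;=\; \tfrac{1}{2}\,\mathbb{E}\int_0^T \bigl\|v_i-\Theta_1\alpha_i-\Theta_2\bigr\|^2_{R+D^TPD}\,dt \;+\; J_i^{0},
\end{equation*}
where $J_i^{0}$ is independent of $v_i$. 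Because $R+D^TPD\gg 0$, the unique minimizer is the feedback $\bar{v}_i=\Theta_1\bar{\alpha}_i+\Theta_2$ with $\Theta_1,\Theta_2$ as in \eqref{Eq26}, and substitution into the state equation yields \eqref{Eq28}.

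The main obstacle is the first step: propagating uniform convexity from the high-dimensional social cost to the single-agent auxiliary problem with constants that do not degenerate, so that the indefinite-LQ solvability theorem of \cite{sly2016} yields a \emph{uniformly} positive $R+D^TPD$ on $[0,T]$. Once this is secured, the completion-of-squares calculation is routine; the only book-keeping needed is to verify that the inhomogeneous diffusion term $D\Theta_2+\tilde{F}\hat{x}$ enters the cross-term $\langle C^TP\cdot,\cdot\rangle$ correctly so as to match the $D^TP\tilde{F}\hat{x}$ contribution in $\Theta_2$ and the source $-[(PB+C^TPD)(R+D^TPD)^{-1}D^T-C^T]P\tilde{F}\hat{x}$ in the $\varphi$-equation.
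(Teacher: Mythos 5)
Your completion-of-squares portion is sound and is exactly the mechanism the paper implicitly relies on: the paper offers no proof of this lemma at all, simply writing ``By referring \cite{yz1999}, we have the following result,'' so your It\^o computation of $\langle P\alpha_i,\alpha_i\rangle + 2\langle\varphi,\alpha_i\rangle$, the bookkeeping of the $C^TP\tilde{F}\hat{x}$ and $D^TP\tilde{F}\hat{x}$ cross-terms, and the reduction of \eqref{Eq27} to a linear backward ODE with bounded coefficients are all a genuine (and welcome) elaboration of what the authors leave to the literature.

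The gap is in your first step, and you have correctly identified it as the load-bearing one. The claim that the homogeneous part of $J_i$ ``coincides with a single-agent block of the social quadratic form obtained by perturbing only $u_i$'' is not true for finite $N$. If you set $u_j=0$ for $j\neq i$ and drive only $u_i=v_i$ in the coupled system \eqref{Eq4}, the states $x_j$, $j\neq i$, are still excited through $Fx^{(N)}$ and $\tilde{F}x^{(N)}$, the weight $\mathbf{Q}$ contains the off-diagonal blocks $\frac{1}{N}(\hat{Q}-Q)$ acting on $\sum_j x_j = Nx^{(N)} = O(1)$, and the dynamics of $x_i$ itself differs from the decoupled auxiliary dynamics of $\alpha_i$ by the $Fx^{(N)}$ and $\tilde{F}x^{(N)}$ terms. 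So the restriction of the social Hessian to the $i$-th control coordinate and the auxiliary form $J_i^0$ are two \emph{different} quadratic forms, with no a priori ordering between them; they agree only up to corrections of order $O(1/N)\,\mathbb{E}\int_0^T\|v_i\|^2dt$. Consequently the transfer of uniform convexity only works asymptotically, and it requires the convexity modulus $\varepsilon$ in \textbf{(A\ref{A3})} to be bounded below uniformly in $N$ — something the assumption, as literally stated for a fixed population, does not provide. To close the argument you would need either to (i) make the $O(1/N)$ comparison between the two quadratic forms explicit (via a Gronwall estimate on $x_i-\alpha_i$ and on $x_j$, $j\neq i$, in the spirit of the paper's Lemmas \ref{lemma 9} and \ref{lemma 11}) and add a uniform-in-$N$ convexity hypothesis, or (ii) simply impose uniform convexity of the auxiliary functional $J_i^0$ (equivalently, strong regularity of \eqref{Eq25} in the sense of \cite{sly2016}) as a separate standing assumption, which is in effect what the paper does by citation.
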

\subsection{CC system}
Apply the decentralized control law \eqref{Eq26} to \eqref{Eq1}, {and let} $\tilde{x}_i$ be the realized state. By the limiting approximation, the conditions $\mathbb{E}\tilde{x}_i = \hat{x}$, $\mathbb{E}y_1^j = \hat{y}_1$, $\mathbb{E}\beta_1^j = \hat{\beta}_1$ should holds. Then, $\tilde{x}_i$ satisfies
\begin{equation}\label{Eq29}
\left\{
 \begin{aligned}
 & d\tilde{x}_i = (A\tilde{x}_i + B(\Theta_1\tilde{x}_i + \Theta_2) + F\tilde{x}^{(N)})dt + (C\tilde{x}_i + D(\Theta_1\tilde{x}_i + \Theta_2) + \tilde{F}\tilde{x}^{(N)})dW_i,\\
 &\tilde{x}_i(0) = \xi_0, \\
 \end{aligned}
 \right.
\end{equation}
and
\begin{equation*}
\resizebox{\textwidth}{!}{$
 \begin{aligned}
 & \Theta_1 = - (R + D^TPD)^{ - 1}( B^TP + D^TPC),\quad   \Theta_2 = - (R + D^TPD)^{ - 1}( B^T\varphi + D^TP\tilde{F}\mathbb{E}\tilde{x}_i).
 \end{aligned}$}
 \end{equation*}
 {Here}
 \begin{equation*}
 \left\{
 \begin{aligned}
 & \dot{\varphi} + [A^T - (PB + C^TPD)(R + D^TPD)^{ - 1}B^T]\varphi \\
 & \hspace{3mm} - [(PB + C^TPD)(R + D^TPD)^{ - 1}D^T - C^T]P\tilde{F}\mathbb{E}\tilde{x}_i + PF\mathbb{E}\tilde{x}_i\\
 &\hspace{3mm} - Q(\Gamma \mathbb{E}\tilde{x}_i + \eta) - \Gamma^TQ[(I - \Gamma) \mathbb{E}\tilde{x}_i - \eta] + F^T\hat y_2 + F^T\hat{y}_1 + \tilde{F}^T\hat\beta_1 = 0, \\
 & \varphi(T) = - G(\bar{\Gamma} \mathbb{E}\tilde{x}_i(T) + \bar{\eta}) - \bar{\Gamma}^TG[(I - \bar{\Gamma}) \mathbb{E}\tilde{x}_i(T) - \bar{\eta}].
 \end{aligned}
 \right.
 \end{equation*}
 The auxiliary processes \eqref{Eq18} becomes
 \begin{equation*}
 \left\{
 \begin{aligned}
 & dy_1^j = - [ A^Ty_1^j + C^T\beta_1^j+Q(\tilde{x}_j - \Gamma \mathbb{E}\tilde{x}_i - \eta)]dt + \beta_1^jdW_j + \sum_{j^{'}\neq j}\beta_1^{j'}dW_{j'}, \\
 & dy_2 = - [(A + F)^Ty_2+ F^T\mathbb{E}y_1^j + \tilde{F}^T\mathbb{E}\beta_1^j -\Gamma^TQ(\mathbb{E}\tilde{x}_i - \Gamma \mathbb{E}\tilde{x}_i - \eta) ]dt,\\
 &y_1^j(T) = G(\tilde{x}_j(T)-\bar{\Gamma}\mathbb{E}\tilde{x}_i(T) - \bar{\eta}),\quad y_2(T) = -\bar{\Gamma}^TG((I- \bar{\Gamma}) \mathbb{E}\tilde{x}_i(T) - \bar{\eta}).
 \end{aligned}
 \right.
\end{equation*}
By taking summation and expectation of \eqref{Eq29}, we have
\begin{equation*}
\left\{
 \begin{aligned}
 & d\tilde{x}^{(N)} = (A\tilde{x}^{(N)}  +  B(\Theta_1\tilde{x}^{(N)}  +  \Theta_2)  +  F\tilde{x}^{(N)})dt\\
 &\hspace{1.5cm}+  \frac{1}{N}\sum_{i=1}^{N}(C\tilde{x}_i  +  D(\Theta_1\tilde{x}_i +  \Theta_2)  +  \tilde{F}\tilde{x}^{(N)})dW_i, \quad \tilde{x}^{(N)}(0) = \xi_0,\\
 & d\mathbb{E}\tilde{x}_i = (A\mathbb{E}\tilde{x}_i   +  B(\Theta_1\mathbb{E}\tilde{x}   +  \Theta_2)  +  F\mathbb{E}\tilde{x}_i)dt, \quad \mathbb{E}\tilde{x}_i(0)=\xi_0.\\
 \end{aligned}
 \right.
\end{equation*}
On the other hand, by  basic property of It\^{o} integral, for $i\neq j$, we have
\[\resizebox{\textwidth}{!}{$\left<\int_{0}^{t}(C\tilde{x}_i  +  D(\Theta_1\tilde{x}_i +  \Theta_2)  +  \tilde{F}\tilde{x}^{(N)})dW_i,\int_{0}^{t}(C\tilde{x}_j  +  D(\Theta_1\tilde{x}_j +  \Theta_2)  +  \tilde{F}\tilde{x}^{(N)})dW_j\right>=0.$}\]
Thus, it follows that
\[\lim_{N\rightarrow+\infty}\mathbb{E}\bigg\|\frac{1}{N}\sum_{i=1}^{N}\int_{0}^{t}(C\tilde{x}_i  +  D(\Theta_1\tilde{x}_i +  \Theta_2)  +  \tilde{F}\tilde{x}^{(N)})dW_i\bigg\|^2 = 0,\ \text{in }L^2(0,T;\mathbb{R}^n).\]
This implies that $\tilde{x}^{(N)}\rightarrow\mathbb{E}\tilde{x}_i$  when $N\rightarrow\infty$, and correspondingly we denotes $\tilde{x}_i\rightarrow\check{x}$, $y_1^j\rightarrow \check y_1$, $y_2\rightarrow \check y_2$. By letting
\begin{equation}\label{Eq30}
\resizebox{\textwidth}{!}{$
 \left\{
 \begin{aligned}
 & \Pi_1 = A - B(R + D^TPD)^{ - 1}( B^TP + D^TPC),\quad \Pi_2 = F - B(R + D^TPD)^{ - 1}D^TP\tilde{F},\\
 & \Pi_3 = - B(R + D^TPD)^{ - 1}B^T,\quad \Pi_1' = C - D(R + D^TPD)^{ - 1}( B^TP + D^TPC),\\
 & \Pi_4 = (PB + C^TPD)(R + D^TPD)^{ - 1}D^TP\tilde{F} - C^TP\tilde{F} + PF + Q\Gamma + \Gamma^TQ(I - \Gamma),\\
 & \Pi_2' = \tilde{F} - D(R + D^TPD)^{ - 1}D^TP\tilde{F},\quad \Pi_3' = - D(R + D^TPD)^{ - 1}B^T,\\
 \end{aligned}
 \right.$}
 \end{equation}
 we have the following CC system:
 \begin{equation}\label{Eq31}
 \left\{
 \begin{aligned}
 & d\check{x} = (\Pi_1\check{x} + \Pi_2\mathbb{E}\check{x} + \Pi_3\check\varphi)dt + (\Pi_1'\check{x} + \Pi_2'\mathbb{E}\check{x} + \Pi_3'\check\varphi)dB(t), \\
 & d\check{\varphi} = ( - \Pi_1^T\check\varphi + \Pi_4\mathbb{E}\check{x} - F^T\check{y}_2 - F^T\mathbb{E}\check{y}_1 - \tilde{F}^T\mathbb{E}\check\beta_1 + Q\eta - \Gamma^TQ\eta )dt, \\
 & d\check y_1 = ( - Q\check{x} + Q\Gamma \mathbb{E}\check{x} - A^T\check y_1 - C^T\check\beta_1 + Q\eta )dt + \check\beta_1dB(t), \\
 & d\check{y}_2 = [\Gamma^TQ(I - \Gamma) \mathbb{E}\check{x} - F^T\mathbb{E}\check y_1 - \tilde{F}^T\mathbb{E}\check\beta_1 - (A + F)^T\check{y}_2 -\Gamma^TQ\eta]dt,\\
 &\check{x}(0) = \xi_0,\quad \check\varphi(T) = - G(\bar{\Gamma} \mathbb{E}\check{x}(T) + \bar{\eta}) - \bar{\Gamma}^TG[(I - \bar{\Gamma}) \mathbb{E}\check{x}(T) - \bar{\eta}],\\
 &\check{y}_1(T) = G(\check{x}(T)-\bar{\Gamma}\mathbb{E}\check{x}(T) - \bar{\eta}),\quad \check{y}_2(T) = -\bar{\Gamma}^TG((I- \bar{\Gamma}) \mathbb{E}\check{x}(T) - \bar{\eta}),
 \end{aligned}
 \right.
 \end{equation}
 where $B(t)$ is a generic Brownian motion. Due to the symmetry and decentralization, only one generic Brownian motion $B(t)$ is needed to characterize the CC system here. Moreover, the mean-field terms $\hat{x}$, $\hat y_1$, $\hat\beta_1$ can be determined by $\hat{x} = \mathbb{E}\check{x}$, $\hat y_1 = \mathbb{E}\check y_1$, $\hat\beta_1 = \mathbb{E}\check\beta_1$, $\hat y_2 = \mathbb{E}\check y_2 = \check y_2$.
\begin{remark}
 {CC system \eqref{Eq31}} is a highly coupled MF-FBSDEs. It is different {from}  a general CC system (e.g., \cite{bensoussan2016linear}, \cite{jm2017}, \cite{h2010}), since the adapted terms (i.e., $C^T\check\beta_1$, $\tilde{F}^T\mathbb{E}\check\beta_1$) enter the drift term. {In general situation, by taking expectation on the realized state, an ODEs type CC system would be obtained. However, in our system here, $C^T\check\beta_1$, $\tilde{F}^T\mathbb{E}\check\beta_1$ can not be determined. Thus, the dynamics of $\hat{x}$, $\hat{y}_1$ and $\hat{\beta}_1$ cannot be obtained directly, and we can only represent them in an embedded way.}
\end{remark}
\subsection{Solvability of CC system}
 {In what follows, to simplify CC system \eqref{Eq31}, we apply some decentralizing method, which is firstly proposed by  Jiongmin Yong in \cite{y2013}.  This method would bring us a ``global solution'', while the traditional contraction mapping (see \cite{bensoussan2016linear}, \cite{nie}) can only lead to a ``local  solution''. Here ``global solution" means the solution is admitted in the whole time duration $[0,T]$, and ``local solution" means the solution could only be admitted  in a small time duration $[T_0,T]$.

  To apply the decentralizing method, for the first step, we rewrite \eqref{Eq31} as follows}
\begin{equation}\label{Eq32}
 \left\{
 \begin{aligned}
 & dX = \left(A_1X + \bar{A}_1\mathbb{E}X + B_1{Y}\right)dt + \left(A_1'X + \bar{A}_1'\mathbb{E}X + B_1'{Y}\right)dB(t), \\
 & d{Y} = \left(A_2X + \bar{A}_2\mathbb{E}X + B_2{Y} + \bar{B}_2\mathbb{E}Y + C_2{Z} + \bar{C}_2\mathbb{E}Z + f\right)dt +{Z}dB(t), \\
 & X(0) = \bar\xi_0,\quad {Y}(T) = \bar{G}X(T) + \bar{G}'\mathbb{E}X(T) +{g},
 \end{aligned}
 \right.
\end{equation}
where
\begin{equation*}
\resizebox{\textwidth}{!}{$
 \left\{
 \begin{aligned}
 & X \triangleq \left(\begin{smallmatrix}
 \check x \\
 0 \\
 0
 \end{smallmatrix}\right),
 Y \triangleq \left(\begin{smallmatrix}
 \check\varphi \\
 \check y_1 \\
 \check{y}_2
 \end{smallmatrix}\right), Z \triangleq \left(\begin{smallmatrix}
 0 \\
 \beta_1 \\
 0
 \end{smallmatrix}\right),\bar\xi_0\triangleq \left(\begin{smallmatrix}
 \xi_0 \\
 0 \\
 0\\
 \end{smallmatrix}\right),\\
 &A_1 \triangleq \left(\begin{smallmatrix}
 \Pi_1 & 0 & 0 \\
 0 & 0 & 0 \\
 0 & 0 & 0
 \end{smallmatrix}\right), \bar{A}_1\triangleq \left(\begin{smallmatrix}
 \Pi_2 & 0 & 0 \\
 0 & 0 & 0 \\
 0 & 0 & 0
 \end{smallmatrix}\right), B_1\triangleq\left(\begin{smallmatrix}
 \Pi_3 & 0 & 0 \\
 0 & 0 & 0 \\
 0 & 0 & 0
 \end{smallmatrix}\right), A_1' \triangleq \left(\begin{smallmatrix}
 \Pi_1' & 0 & 0 \\
 0 & 0 & 0 \\
 0 & 0 & 0
 \end{smallmatrix}\right), \bar{A}_1'\triangleq \left(\begin{smallmatrix}
 \Pi_2' & 0 & 0 \\
 0 & 0 & 0 \\
 0 & 0 & 0
 \end{smallmatrix}\right),\\
 &B_1'\triangleq\left(\begin{smallmatrix}
 \Pi_3' & 0 & 0 \\
 0 & 0 & 0 \\
 0 & 0 & 0
 \end{smallmatrix}\right),
 A_2 \triangleq \left(\begin{smallmatrix}
 0 & 0 & 0\\
 -Q & 0 & 0\\
 0 & 0 & 0\\
 \end{smallmatrix}\right) , \bar{A}_2 \triangleq \left(\begin{smallmatrix}
 \Pi_4 & 0 & 0\\
 Q\Gamma & 0 & 0\\
 - \Gamma^TQ(I - \Gamma)& 0 & 0
 \end{smallmatrix}\right),\\
 &B_2 \triangleq \left(\begin{smallmatrix}
 -\Pi_1^T & 0 & F^T \\
 0 & -A^T & 0 \\
 0 & 0 & -(A+F)^T
 \end{smallmatrix}\right),
 \bar{B}_2 \triangleq \left(\begin{smallmatrix}
 0 & -F^T & 0 \\
 0 & 0 & 0\\
 0 & -F^T & 0
 \end{smallmatrix}\right),
  C_2 \triangleq \left(\begin{smallmatrix}
 0 & 0 & 0 \\
 0 & -C^T & 0 \\
 0 & 0 & 0
 \end{smallmatrix}\right), \bar{C}_2 \triangleq \left(\begin{smallmatrix}
 0 & -\tilde{F}^T & 0 \\
 0 & 0 & 0 \\
 0 & -\tilde{F}^T & 0
 \end{smallmatrix}\right),\\
 & f \triangleq \left(\begin{smallmatrix}
 Q\eta - \Gamma^TQ\eta \\
 Q\eta \\
 \Gamma^TQ\eta\\
 \end{smallmatrix}\right),
 \bar{G} \triangleq \left(\begin{smallmatrix}
 0 & 0 & 0\\
 G & 0 & 0\\
 0 & 0 & 0\\
 \end{smallmatrix}\right) ,
 \bar{G}'\triangleq \left(\begin{smallmatrix}
 [- G\bar{\Gamma}   - \bar{\Gamma}^TG(I - \bar{\Gamma})]& 0& 0\\
 -G\bar{\Gamma}& 0& 0\\
 -\bar{\Gamma}^TG(I- \bar{\Gamma}) & 0& 0\\
 \end{smallmatrix}\right),
 g\triangleq \left(\begin{smallmatrix}
  \bar{\Gamma}^TG \bar{\eta} - G \bar{\eta}\\
  -G \bar{\eta} \\
  \bar{\Gamma}^TG \bar{\eta}\\
 \end{smallmatrix}\right).
 \end{aligned}
 \right.$}
\end{equation*}
 By taking expectation of \eqref{Eq32}, one can obtain the following decentralized functions
\begin{equation}\label{Eq33}
\resizebox{\textwidth}{!}{$
 \left\{
 \begin{aligned}
 & d\mathbb{E}X = \left[(A_1 + \bar{A}_1)\mathbb{E}X + B_1\mathbb{E}{Y}\right]dt,\ \ \mathbb{E}X(0) = \bar{\xi}_0, \\
 & d(X-\mathbb{E}X) = \left[A_1(X-\mathbb{E}X) + B_1(Y-\mathbb{E}{Y})\right]dt \\
 & \hspace{2.5cm}+\left[A_1'(X-\mathbb{E}X) + (A_1'+\bar{A}_1')\mathbb{E}X + B_1'(Y-\mathbb{E}Y) + B_1'\mathbb{E}Y \right]dB(t), \\
 & d\mathbb{E}{Y} = \left[(A_2 + \bar{A}_2)\mathbb{E}X + (B_2 + \bar{B}_2)\mathbb{E}Y + (C_2 + \bar{C}_2)\mathbb{E}Z + f\right]dt , \\
 & d(Y-\mathbb{E}Y) = \left[A_2(X-\mathbb{E}X) + B_2(Y-\mathbb{E}Y) + C_2(Z-\mathbb{E}Z) \right]dt + {Z}dB(t),\\
 &(X-\mathbb{E}X)(0) = 0,\ \ \mathbb{E}{Y}(T) = (\bar{G}+\bar{G}')\mathbb{E}X(T)+{g},\ \ Y(T)-\mathbb{E}Y(T) = \bar{G}(X-\mathbb{E}X)(T).
 \end{aligned}
 \right.$}
\end{equation}
Motivated by \eqref{Eq33}, we introduce the following FBSDEs
\begin{equation}\label{Eq34}
 \left\{
 \begin{aligned}
 & dX_1 = \left[(A_1 + \bar{A}_1)X_1 + B_1Y_1\right]dt, \\
 & dX_2 = \left[A_1X_2 + B_1Y_2\right]dt+\left[A_1'X_2 + (A_1'+\bar{A}_1')X_1 + B_1'Y_2 + B_1'Y_1 \right]dB(t),\\
 & dY_1 = \left[(A_2 + \bar{A}_2)X_1 + (B_2 + \bar{B}_2)Y_1 + (C_2 + \bar{C}_2)\mathbb{E}Z + f\right]dt ,  \\
 & dY_2 = \left[A_2X_2 + B_2Y_2 + C_2(Z-\mathbb{E}Z) \right]dt + {Z}dB(t),  \\
 &X_1(0) = \bar{\xi}_0,\ X_2(0) = 0,\ Y_1(T) = (\bar{G}+\bar{G}')X_1(T)+{g},\ Y_2(T) = \bar{G}X_2(T).
 \end{aligned}
 \right.
\end{equation}
By comparing  \eqref{Eq32} and \eqref{Eq34}, we have the following result.
\begin{proposition}
 If
 \begin{equation}\label{Eq37}
   \det\left((0,I) \Phi(T,0) \binom{0}{I}\right)\neq 0,
 \end{equation}
 where $\Phi$ is the transmission matrix w.r.t.  $\left(\begin{smallmatrix}
   A_1 & B_1 \\
   A_2-\bar{G}A_1+ (B_2 - \bar{G}B_1)\bar{G} & B_2 - \bar{G}B_1
 \end{smallmatrix}\right)$, then the MF-FBSDEs system \eqref{Eq32} is equivalent to the FBSDEs system \eqref{Eq34}.
\end{proposition}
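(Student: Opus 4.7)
The equivalence is to be established through the mean/deviation decomposition
$X_1 \leftrightarrow \mathbb{E}X$, $X_2 \leftrightarrow X-\mathbb{E}X$, $Y_1 \leftrightarrow \mathbb{E}Y$, $Y_2 \leftrightarrow Y-\mathbb{E}Y$, keeping $Z$ unchanged. The forward direction is routine: starting from an adapted solution $(X,Y,Z)$ of \eqref{Eq32}, define $X_1,X_2,Y_1,Y_2$ by the formulas above. Taking expectation of the two equations in \eqref{Eq32} and then subtracting the mean from each, one obtains exactly the four equations in \eqref{Eq34} together with the correct boundary data (the terminal $(\bar G+\bar G')\mathbb{E}X(T)+g$ splits as $(\bar G+\bar G')X_1(T)+g$ for $Y_1(T)$ and $\bar G X_2(T)$ for $Y_2(T)$, using $\mathbb{E}X_2(T)=0$).

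The nontrivial direction is the converse. Given a solution $(X_1,X_2,Y_1,Y_2,Z)$ of \eqref{Eq34}, set $X:=X_1+X_2$ and $Y:=Y_1+Y_2$; the plan is to verify $X_1=\mathbb{E}X$ and $Y_1=\mathbb{E}Y$, after which summing the equations of \eqref{Eq34} reproduces \eqref{Eq32} line by line. Since the $X_1$ equation is a deterministic forward ODE with deterministic initial data, $X_1$ is deterministic; the $Y_1$ equation is a backward ODE whose coefficients involve only $X_1$, $Y_1$, the (deterministic) function $\mathbb{E}Z$, and $f$, with deterministic terminal value, so $Y_1$ is deterministic as well. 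Therefore proving $X_1=\mathbb{E}X$ and $Y_1=\mathbb{E}Y$ reduces to proving $\mathbb{E}X_2 = 0$ and $\mathbb{E}Y_2=0$.

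Taking expectation in the $X_2$ and $Y_2$ equations — the stochastic integrals vanish and $\mathbb{E}(Z-\mathbb{E}Z)=0$ — gives the homogeneous two-point boundary value problem
\begin{equation*}
\begin{aligned}
&\tfrac{d}{dt}\mathbb{E}X_2 = A_1\,\mathbb{E}X_2 + B_1\,\mathbb{E}Y_2,\qquad \mathbb{E}X_2(0)=0,\\
&\tfrac{d}{dt}\mathbb{E}Y_2 = A_2\,\mathbb{E}X_2 + B_2\,\mathbb{E}Y_2,\qquad \mathbb{E}Y_2(T)=\bar G\,\mathbb{E}X_2(T).
\end{aligned}
\end{equation*}
The key step is the Riccati-type substitution $W:=\mathbb{E}Y_2-\bar G\,\mathbb{E}X_2$. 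A direct computation yields $\tfrac{d}{dt}W = (A_2-\bar G A_1 + (B_2-\bar G B_1)\bar G)\,\mathbb{E}X_2 + (B_2-\bar G B_1)\,W$, and for the forward equation one uses the crucial block-structural identity $B_1\bar G=0$ (visible by inspection of the definitions of $B_1$ and $\bar G$) so that $\tfrac{d}{dt}\mathbb{E}X_2 = A_1\,\mathbb{E}X_2 + B_1\,W$. Hence $(\mathbb{E}X_2,W)$ is driven by the matrix appearing in the statement, with boundary data $\mathbb{E}X_2(0)=0$ and $W(T)=0$. Propagating by the transition matrix $\Phi$ gives $W(T)=(0,I)\Phi(T,0)\binom{0}{I}W(0)$, and the determinant condition \eqref{Eq37} forces $W(0)=0$, so $W\equiv 0$ and consequently $\mathbb{E}X_2\equiv 0$, $\mathbb{E}Y_2\equiv 0$.

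The remaining work is bookkeeping: the identity $\mathbb{E}X_2=\mathbb{E}Y_2=0$ upgrades $X_1=\mathbb{E}X$ and $Y_1=\mathbb{E}Y$, and adding the $X_1$ and $X_2$ equations (and the $Y_1$ and $Y_2$ equations) of \eqref{Eq34} recovers \eqref{Eq32} with the same $Z$ and the correct boundary data. The main obstacle is the third paragraph — identifying the right Riccati substitution, spotting the simplification $B_1\bar G=0$ that makes the transmission matrix in the statement match the decoupled system, and arguing rigorously that $Y_1$ is deterministic despite the presence of the term $\mathbb{E}Z$.
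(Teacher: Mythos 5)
Your proof is correct and follows essentially the same route as the paper: the mean/deviation decomposition, the reduction of the converse to showing $\mathbb{E}X_2=\mathbb{E}Y_2\equiv 0$, and the substitution $W=\mathbb{E}Y_2-\bar{G}\mathbb{E}X_2$ leading to the homogeneous two-point boundary value problem resolved by the determinant condition. Your explicit observation that $B_1\bar{G}=0$ (so that the forward equation reads $\tfrac{d}{dt}\mathbb{E}X_2=A_1\mathbb{E}X_2+B_1W$ and the transmission matrix in the statement is the right one) is a detail the paper leaves implicit, but it is not a different approach.
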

\begin{proof}
 Suppose that $(X,Y,Z)$ is the adapted solution of system \eqref{Eq32}. Let $X_1 = \mathbb{E}X$, $Y_1 = \mathbb{E}Y$, ${X_2} = X - \mathbb{E}X$, ${Y_2} = Y - \mathbb{E}Y$, and it is easy to verify that $X_1$, $Y_1$, ${X_2}$, ${Y_2}$ satisfy the system \eqref{Eq34}. Conversely, if $(X_1, Y_1, X_2, Y_2)$ is the solutions of system \eqref{Eq34}, then let $X = X_1 + X_2$, $Y = Y_1 + Y_2$, and we have
 \begin{equation*}
 \left\{
 \begin{aligned}
 & dX = \left(A_1X + \bar{A}_1X_1 + B_1{Y}\right)dt + \left(A_1'X + \bar{A}_1'X_1 + B_1'{Y}\right)dB(t), \\
 & d{Y} = \left(A_2X + \bar{A}_2X_1 + B_2{Y} + \bar{B}_2Y_1 + C_2{Z} + \bar{C}_2\mathbb{E}Z + f\right)dt +{Z}dB(t), \\
 & X(0) = \bar{\xi}_0,\quad {Y}(T) = \bar{G}X(T) + \bar{G}'X_1(T) +{g}.
 \end{aligned}
 \right.
 \end{equation*}
 Thus, we just need to verify that $X_1 = \mathbb{E}(X_1 + X_2)$ and $Y_1 = \mathbb{E}(Y_1 + Y_2)$. Considering the expectation of $X_2$ and $Y_2$, it follows that
 \begin{equation*}
 \resizebox{\textwidth}{!}{$
 \left\{
 \begin{aligned}
 & d\mathbb{E}X_2 = \left[A_1\mathbb{E}X_2 + B_1\mathbb{E}Y_2\right]dt,\\
 & d(\mathbb{E}Y_2- \bar{G}\mathbb{E}X_2) = \left[(A_2-\bar{G}A_1+ (B_2 - \bar{G}B_1)\bar{G})\mathbb{E}X_2 + (B_2 - \bar{G}B_1)(\mathbb{E}Y_2- \bar{G}\mathbb{E}X_2) \right]dt,\\
 & \mathbb{E}X_2(0) = 0, \quad(\mathbb{E}Y_2 - \bar{G}\mathbb{E}X_2)(T) = 0. \\
 \end{aligned}
 \right.$}
 \end{equation*}
 Clearly,  if
 \[\det\left((0,I) \Phi(T,0) \binom{0}{I}\right)\neq 0,\]
   then $\mathbb{E}X_2 = \mathbb{E}{Y_2} \equiv 0$. Besides, $X_1$ and $Y_1$ are deterministic, which implies $X_1 = \mathbb{E}(X_1 + X_2)$ and $Y_1 = \mathbb{E}(Y_1 + Y_2)$.
\end{proof}

To study the solvability of \eqref{Eq34}, we firstly rewrite it as following form
\begin{equation}\label{cc3}
 \left\{
 \begin{aligned}
 & d \tilde{X} = (\tilde A_1 \tilde{X} + \tilde B_1 \tilde{Y})dt + ({\tilde A_1'} \tilde{X} + {\tilde B_1'} \tilde{Y})dB(t),\quad  \tilde{X}(0) = \tilde\xi_0,\\
 & d \tilde{Y} = ( \tilde A_2 \tilde{X} + \tilde B_2 \tilde{Y} + \tilde C_2 \tilde{Z} + \tilde{\bar{C}}_2\mathbb{E} \tilde{Z} + \tilde{f})dt + \tilde{Z} dB(t), \quad\tilde{Y}(T)=\tilde{G}\tilde{X}+\tilde{g},\\
 \end{aligned}
 \right.
\end{equation}
where $ \tilde{X} = \binom{X_1}{X_2}$, $ \tilde{Y} = \binom{Y_1}{Y_2}$, $ \tilde A_1 = \left(\begin{smallmatrix}
 A_1+\bar{A}_1 & 0 \\
 0 & A_1
 \end{smallmatrix}\right)$, $ \tilde B_1 = \left(\begin{smallmatrix}
 B_1 & 0 \\
 0 & B_1
 \end{smallmatrix}\right)$, $ {\tilde A_1'} = \left(\begin{smallmatrix}
 0 & 0 \\
 A_1' + \bar A_1' & A_1'
 \end{smallmatrix}\right)$, $ {\tilde B_1'} = \left(\begin{smallmatrix}
 0 & 0 \\
 B_1' & B_1'
 \end{smallmatrix}\right)$,
${\tilde A_2} = \left(\begin{smallmatrix}
 A_2+\bar{A}_2 & 0 \\
 0 & A_2
 \end{smallmatrix}\right)$, $ {\tilde B_2} = \left(\begin{smallmatrix}
 B_2+\bar{B}_2 & 0 \\
 0 & B_2
 \end{smallmatrix}\right)$, $ {\tilde C_2} = \left(\begin{smallmatrix}
 0 & 0 \\
 0 & C_2
 \end{smallmatrix}\right)$, $\tilde{\bar{C}}_2 = \left(\begin{smallmatrix}
 0 & C_2+\bar{C}_2 \\
 0 & -C_2
 \end{smallmatrix}\right)$, $\tilde{G} = \left(\begin{smallmatrix}
 \bar{G} + \bar{G}' & 0 \\
 0 & \bar{G}
 \end{smallmatrix}\right)$, $ \tilde{f} = \binom{f}{0}$, $\tilde\xi_0 = \binom{\bar{\xi}_0}{0}$, $\tilde{g} = \binom{g}{0}$.
{Since \eqref{cc3} is a FBSDE, we use Riccati equation method to decouple it, and its solvability yield that:}
\begin{lemma}\label{lemma 3}
 Under the {\rm\textbf{(A\ref{A1})}-\textbf{(A\ref{A2})}}, if the following Riccati equation
 \begin{equation}\label{RE2}
 \left\{
 \begin{aligned}
 & - \dot{K} + \tilde{B}_2{K} + ( \tilde{C}_2+\tilde{\bar{C}}_2){K}{\tilde A_1'} + ( \tilde C_2+\tilde{\bar{C}}_2){K}{\tilde B_1'}{K} - {K} \tilde A_1 - {K} \tilde B_1 + \tilde A_2 = 0, \\
 & {K}(T) = \tilde{G},
 \end{aligned}
 \right.
\end{equation}
 admits a unique solution and condition \eqref{Eq37} holds, then the system \eqref{cc3} admits a unique solution, and equivalently, the CC system \eqref{Eq31} admits a unique solution.
\end{lemma}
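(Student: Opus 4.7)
The plan is to apply the standard Riccati decoupling for linear (mean-field) FBSDEs. Motivated by the linearity of \eqref{cc3}, I would postulate the ansatz
\[
\tilde{Y}(t) = K(t)\tilde{X}(t) + \psi(t),
\]
where $K(\cdot)$ is the deterministic matrix-valued solution of \eqref{RE2} and $\psi(\cdot)$ is an adapted process to be determined. Writing $d\psi = \alpha\,dt + \beta\,dB(t)$ and applying It\^o's formula to $K\tilde{X}$, the forward dynamics for $\tilde{X}$ yield an expression for $d\tilde{Y}$ that I would match term-by-term with the BSDE in \eqref{cc3}. Matching the martingale parts gives the identification
\[
\tilde{Z} = K(\tilde{A}_1' + \tilde{B}_1'K)\tilde{X} + K\tilde{B}_1'\psi + \beta,
\]
and taking expectations provides $\mathbb{E}\tilde{Z}$ in the analogous affine form.

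Next I would match the drift parts and separate the coefficients of $\tilde{X}$ from the constant-in-$\tilde{X}$ remainder. The $\tilde{X}$-linear equation collapses, by direct computation, into precisely the Riccati equation \eqref{RE2}, while the remainder yields a linear (hence uniquely solvable) BSDE for $\psi$ with terminal condition $\psi(T) = \tilde{g}$ and coefficients that are fully determined once $K$ is known. Because the driving data of this BSDE are deterministic, the solution $\psi$ is itself deterministic and $\beta \equiv 0$, which justifies the cancellation needed to produce the combined coefficient $(\tilde{C}_2+\tilde{\bar{C}}_2)$ that appears in \eqref{RE2}. With $K$ and $\psi$ in hand, substituting $\tilde{Y} = K\tilde{X} + \psi$ into the forward SDE yields the closed linear SDE
\[
d\tilde{X} = \bigl[(\tilde{A}_1 + \tilde{B}_1 K)\tilde{X} + \tilde{B}_1\psi\bigr]dt + \bigl[(\tilde{A}_1' + \tilde{B}_1' K)\tilde{X} + \tilde{B}_1'\psi\bigr]dB(t),
\]
which has a unique strong solution by standard theory; $\tilde{Y}$ and $\tilde{Z}$ are then reconstructed from the identifications above, and one checks the terminal condition $\tilde{Y}(T) = \tilde{G}\tilde{X}(T) + \tilde{g}$ using $K(T) = \tilde{G}$ and $\psi(T) = \tilde{g}$.

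For uniqueness, given any adapted solution $(\tilde{X}^*, \tilde{Y}^*, \tilde{Z}^*)$ of \eqref{cc3}, I would apply It\^o's formula to $\tilde{Y}^* - K\tilde{X}^* - \psi$ and use \eqref{RE2} together with the BSDE for $\psi$ to show that this difference satisfies a linear BSDE with zero generator and zero terminal value, hence vanishes. This forces $\tilde{Y}^* = K\tilde{X}^* + \psi$, which reduces the forward dynamics of $\tilde{X}^*$ to the closed SDE above and therefore pins down $\tilde{X}^*$, and then $\tilde{Z}^*$, uniquely. Finally, combining this well-posedness of \eqref{cc3} with the equivalence between \eqref{Eq32} and \eqref{Eq34} established under condition \eqref{Eq37} in the preceding proposition, I conclude that the MF-FBSDE \eqref{Eq31} admits a unique solution.

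The main obstacle I anticipate is controlling the mean-field coupling $\tilde{\bar{C}}_2\mathbb{E}\tilde{Z}$ within the decoupling scheme: a naive ansatz would produce two distinct matrix coefficients (one multiplying $\tilde{X}$ and one multiplying $\mathbb{E}\tilde{X}$), whereas \eqref{RE2} features only the sum $\tilde{C}_2+\tilde{\bar{C}}_2$. The reconciliation relies on the block structure built into \eqref{Eq34}, where the deterministic component $(X_1,Y_1)$ carries $\mathbb{E}\tilde{X}$ and the centered component $(X_2,Y_2)$ carries $\tilde{X}-\mathbb{E}\tilde{X}$, so that on each block the mean-field coefficient either acts identically to or is absorbed by the pathwise one. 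Verifying this block-wise cancellation carefully, and checking that the resulting $\psi$ is truly deterministic (so that $\beta\equiv 0$ closes the self-consistency), is the technical crux of the argument.
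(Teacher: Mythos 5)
Your proposal follows essentially the same route as the paper's proof: the affine ansatz $\tilde{Y} = K\tilde{X} + \kappa$, It\^o's formula, identification of $\tilde{Z}$ from the diffusion terms, and a drift comparison that yields the Riccati equation \eqref{RE2} together with a linear terminal-value problem for the offset (the paper's $\kappa$, your $\psi$), whose unique solvability then closes the argument, with the equivalence under \eqref{Eq37} transferring well-posedness back to \eqref{Eq31}. You are in fact slightly more explicit than the paper on two points it passes over quickly --- checking that the offset is deterministic so that $\beta \equiv 0$, and the uniqueness argument via the difference $\tilde{Y}^* - K\tilde{X}^* - \psi$ --- and the mean-field subtlety you flag at the end (a single combined coefficient $\tilde{C}_2 + \tilde{\bar{C}}_2$ rather than separate coefficients for $\tilde{X}$ and $\mathbb{E}\tilde{X}$) is precisely the step the paper resolves by taking expectations before comparing coefficients in \eqref{cc drift coefficients}.
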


\begin{proof}
 Let $ \tilde{Y} = K \tilde{X} + \kappa $. By It\^{o} formula, we have
\begin{equation*}
\resizebox{\textwidth}{!}{$
 \begin{aligned}
 & ( \tilde{A}_2 \tilde{X} + \tilde{B}_2(K \tilde{X} + \kappa) + \tilde{C}_2 \tilde{Z} + {\tilde{\bar{C}}_2}\mathbb{E} \tilde{Z} + \tilde{f})dt + \tilde{Z} dB(t) = d \tilde{Y}\\
 = & dK \times \tilde{X} + K \times d \tilde{X} + d\kappa \\
 = & dK \times \tilde{X} + (K \tilde{A}_1 \tilde{X} + K \tilde{B}_1 K \tilde{X} + K \tilde{B}_1 \kappa)dt + (K\tilde{A}_1' \tilde{X} + K\tilde{B}_1'K \tilde{X} + K\tilde{B}_1'\kappa)dB(t) + d\kappa.
 \end{aligned}$}
\end{equation*}
Comparing the diffusion term, one can obtain
\[\tilde{Z} = K\tilde{A}_1' \tilde{X} + K\tilde{B}_1'K \tilde{X} + K\tilde{B}_1'\kappa,\]
which implies
\[\tilde{\bar{C}}_2\mathbb{E}\tilde{Z} = \tilde{\bar{C}}_2 K\tilde{A}_1'\mathbb{E} \tilde{X} + \tilde{\bar{C}}_2 K\tilde{B}_1'K \mathbb{E} \tilde{X} + \tilde{\bar{C}}_2 K\tilde{B}_1'\kappa.\]
Comparing the drift term and taking expectation, we obtain
\begin{equation}\label{cc drift coefficients}
\resizebox{\textwidth}{!}{$
 \begin{aligned}
 & \tilde{A}_2\mathbb{E} \tilde{X} + \tilde{B}_2 K \mathbb{E} \tilde{X} + \tilde{B}_2\kappa + (\tilde{C}_2 + \tilde{\bar{C}}_2)K\tilde{A}_1'\mathbb{E} \tilde{X} + (\tilde{C}_2 + \tilde{\bar{C}}_2)K\tilde{B}_1'K \mathbb{E} \tilde{X} + (\tilde{C}_2 + \tilde{\bar{C}}_2)K\tilde{B}_1'\kappa + \tilde{f} \\
 = & \dot K \mathbb{E} \tilde{X} + K \tilde{A}_1\mathbb{E} \tilde{X} + K \tilde{B}_1 K \mathbb{E} \tilde{X} + K \tilde{B}_1\kappa + \dot{{\kappa}}.
 \end{aligned}$}
\end{equation}
By comparing the coefficients of \eqref{cc drift coefficients}, $K$ should be the solution of \eqref{RE2}
and $\kappa$ satisfies
\begin{equation}\label{kappa}
 \begin{aligned}
 & - \dot{\kappa} + \big[\tilde{B}_2 + (\tilde{C}_2 + \tilde{\bar{C}}_2)K \tilde{B}_1' - K \tilde{B}_1\big]\kappa +\tilde{f} = 0,\quad\kappa(T) = \tilde{g}.
 \end{aligned}
\end{equation}
Under \textbf{(A\ref{A1})}-\textbf{(A\ref{A2})}, by \cite{sy2014}, equation \eqref{kappa} always admits a unique solution. Thus, if the Riccati equation \eqref{RE2} admits a unique solution, then the system \eqref{cc3} also admits a unique solution. {Correspondingly}, the CC system \eqref{Eq31}  admits a unique solution as well.
\end{proof}

The Riccati equation \eqref{RE2} can be rewritten as follows
\begin{equation}\label{RE3}
 \begin{aligned}
 \dot K = \tilde{A}_2 + \tilde{B}_2 K - K(\tilde{A}_1 + \tilde{B}_1 K) + (\tilde{C}_2 + \tilde{\bar{C}}_2)K(\tilde{A}_1' + \tilde{B}_1' K),\quad K(T) = \tilde{G},
 \end{aligned}
\end{equation}
which is not a general symmetric Riccati equation. Thus, it is not always solvable on $[0,T]$. {However, explicit solutions can still be obtained in some reduced but nontrivial cases. For example,} by \cite[Theorem 5.3]{y2006}, we have the following proposition.
\begin{proposition}\label{prop1}
 If we let $\tilde{C}_2 + \tilde{\bar{C}}_2 = 0$ (i.e., $C = \tilde{F} = 0$) and the following condition hold:
\begin{equation}
 \bigg[(0,I)\Psi(T,t)\binom{0}{I}\bigg]^{-1}\in L^1(0,T;\mathbb{R}^{n\times n}),
\end{equation}
then the Riccati equation \eqref{RE3} admit a unique solution $K$ which is given by the following:
\begin{equation*}
 K = -\bigg[(0,I)\Psi(T,t)\binom{0}{I}\bigg]^{-1}(0,I)\Psi(T,t)\binom{I}{0},\quad t\in[0,T],
\end{equation*}
where $\Psi(t,s)$ is the fundamental matrix w.r.t. $\left(\begin{smallmatrix}
\tilde{A}_1 & \tilde{B}_1 \\
\tilde{A}_2 & \tilde{B}_2
\end{smallmatrix}\right)$.

\end{proposition}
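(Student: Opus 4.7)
The strategy is classical: under the hypothesis $\tilde{C}_2+\tilde{\bar{C}}_2=0$, the Riccati equation \eqref{RE3} collapses to the nonsymmetric matrix Riccati
$$\dot K = \tilde A_2 + \tilde B_2 K - K\tilde A_1 - K\tilde B_1 K, \qquad K(T)=\tilde G,$$
which can be linearized via the Radon ansatz. The plan is to introduce the $2n$-dimensional Hamiltonian ODE associated with the block matrix $\left(\begin{smallmatrix}\tilde A_1&\tilde B_1\\ \tilde A_2&\tilde B_2\end{smallmatrix}\right)$, to represent $K$ as a quotient of two blocks obtained from its fundamental solution, and then to use the stated $L^1$ invertibility hypothesis to guarantee that the quotient is defined on the entire interval $[0,T]$ rather than only locally.

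First I would let $(X,Y)$ satisfy the linear Hamiltonian system driven by the block matrix above. A direct differentiation verifies that $K=YX^{-1}$ satisfies the reduced Riccati wherever $X$ is nonsingular, so the job is reduced to producing $(X,Y)$ with $X$ invertible on $[0,T]$ and with $Y(T)X(T)^{-1}=\tilde G$. Propagating terminal data backward through $\Psi(t,T)=\Psi(T,t)^{-1}$ and expanding by blocks yields $X(t)$ and $Y(t)$ in terms of the four blocks of $\Psi(T,t)$, from which the candidate $K(t)$ is read off.

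Second, I would identify this candidate with the stated formula
$$K(t) = -\bigl[(0,I)\Psi(T,t)\tbinom{0}{I}\bigr]^{-1}(0,I)\Psi(T,t)\tbinom{I}{0},$$
matching the representation in \cite[Thm.~5.3]{y2006}. Smoothness of $\Psi$ together with the integrability hypothesis on the inverse of $(0,I)\Psi(T,t)\binom{0}{I}$ then ensures that $K\in C([0,T];\mathbb R^{n\times n})$ without finite escape time, and the Riccati ODE is satisfied by construction. Uniqueness follows by the standard one-to-one correspondence between globally defined solutions of the Riccati and invariant subspaces of the Hamiltonian flow transverse to the ``vertical'' subspace $\{0\}\times \mathbb R^{n}$, the transversality being exactly the assumed invertibility.

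The main obstacle will be the careful book-keeping of how the terminal matrix $\tilde G$ enters through the block structure of $\Psi(T,t)$: a naive linearization produces a quotient of the form $(\Psi_{22}-\tilde G\Psi_{12})^{-1}(\tilde G\Psi_{11}-\Psi_{21})$, and to reach the compact form stated in the proposition one must pass to an equivalent Hamiltonian in which $\tilde G$ is absorbed into the terminal data via the change of variable $\hat Y = Y-\tilde G X$, then check that the invertibility requirement for the resulting bottom-right block coincides with the hypothesis on $(0,I)\Psi(T,t)\binom{0}{I}$. Once this algebraic reduction is verified, existence, uniqueness, and the explicit formula follow by standard ODE arguments.
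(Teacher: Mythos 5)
The paper supplies no proof of this proposition at all --- it is imported as a direct citation of \cite[Theorem 5.3]{y2006} --- so there is no in-paper argument to compare against. Your Radon-linearization route is the standard (and essentially the only) way to establish such a representation: with $\tilde{C}_2+\tilde{\bar{C}}_2=0$ the equation reduces to the nonsymmetric Riccati $\dot K=\tilde{A}_2+\tilde{B}_2K-K\tilde{A}_1-K\tilde{B}_1K$, $K(T)=\tilde{G}$, the quotient $K=YX^{-1}$ along the Hamiltonian flow solves it wherever $X$ is invertible, and global existence is exactly the absence of escape of that quotient. (Uniqueness, by the way, needs none of the invariant-subspace machinery you invoke: the right-hand side is quadratic, hence locally Lipschitz, so the terminal value problem has at most one solution by Picard--Lindel\"of; only global existence is at stake.)

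The genuine gap is precisely at the step you defer to the end. Propagating the terminal data $(I,\tilde{G})$ backward through $\Psi$ gives, as you correctly write, $K(t)=\bigl(\Psi_{22}-\tilde{G}\Psi_{12}\bigr)^{-1}\bigl(\tilde{G}\Psi_{11}-\Psi_{21}\bigr)$ in the blocks of $\Psi(T,t)$, whereas the stated compact formula is $K=-\Psi_{22}^{-1}\Psi_{21}$; at $t=T$, where $\Psi(T,T)=I$, the latter yields $K(T)=0\neq\tilde{G}$. Your proposed reconciliation via $\hat Y=Y-\tilde{G}X$ does absorb $\tilde{G}$ into the terminal data, but it replaces the Hamiltonian by $\left(\begin{smallmatrix}\tilde{A}_1+\tilde{B}_1\tilde{G} & \tilde{B}_1\\ \tilde{A}_2-\tilde{G}\tilde{A}_1+(\tilde{B}_2-\tilde{G}\tilde{B}_1)\tilde{G} & \tilde{B}_2-\tilde{G}\tilde{B}_1\end{smallmatrix}\right)$, whose fundamental matrix is \emph{not} the $\Psi$ defined in the proposition, so the asserted coincidence of the two invertibility conditions fails unless $\tilde{G}=0$. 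To close the argument you must either keep the uncompacted quotient and place the $L^1$ hypothesis on $\bigl[(0,I)\Psi(T,t)\binom{0}{I}-\tilde{G}(0,I)\Psi(T,t)\binom{I}{0}\bigr]^{-1}$ (equivalently on $(\Psi_{22}-\tilde{G}\Psi_{12})^{-1}$), or take $\Psi$ to be the fundamental matrix of the $\tilde{G}$-shifted block matrix --- the same absorption of the terminal weight that the paper performs for $\Phi$ in its earlier equivalence proposition. Carrying out this block computation explicitly, rather than asserting the identification, is what is missing; as a by-product it shows the proposition's formula cannot be literally correct with $\Psi$ as defined there when $\tilde{G}\neq 0$.
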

Through the discussion above, we have studied the solvability of the CC system. Besides the Riccati equation method, the solvability of a FBSDE can still be studied through many other techniques, like four-step scheme (see \cite{yz1999}) monotone condition (see \cite{peng1999fully}) and contraction mapping method (see \cite{pardoux1999forward}). { However, it is not the main topic in this paper, and we would not make more discussion in detail. In what follows, for the sake of discussion simplicity, we introduce the following assumption directly.}
\begin{assumption}\label{a5}{\rm(\textbf{A4})}
  The CC system \eqref{Eq31} admit a unique solution $(\check{x}, \check{y}_1, \check{y}_2, \check{\beta}_1)$.
  \end{assumption}
By {Lemma \ref{Lem5.1}}, we have the following theorem.
\begin{theorem}\label{them1}
 Under {\rm\textbf{(A\ref{A1})}-\textbf{(A\ref{a5})}}, {Problem \ref{P2}} admits a feedback form mean-field decentralized control $\tilde{u}_i = \Theta_1\tilde{x}_i + \Theta_2$, where
 \begin{equation*}
 \resizebox{\textwidth}{!}{$
 \begin{aligned}
 \Theta_1 \triangleq - (R + D^TPD)^{ - 1}( B^TP + D^TPC),\ \Theta_2 \triangleq - (R + D^TPD)^{ - 1}( B^T\varphi + D^TP\tilde{F}\hat{x}),
 \end{aligned}$}
 \end{equation*}
 and $P$, $\varphi$ are the solution of
 \begin{equation*}
 \resizebox{\textwidth}{!}{$
 \left\{
 \begin{aligned}
 & \dot{P} + PA + A^TP + C^TPC + Q - (PB + C^TPD )(R + D^TPD)^{ - 1}(B^TP + D^TPC) = 0, \\
 & \dot{\varphi} + [A^T - (PB + C^TPD)(R + D^TPD)^{ - 1}B^T]\varphi \\
 & \hspace{3mm} - [(PB + C^TPD)(R + D^TPD)^{ - 1}D^T - C^T]P\tilde{F}\hat{x} + PF\hat{x} \\
 & \hspace{3mm} - Q(\Gamma \hat{x} + \eta) - \Gamma^TQ[(I - \Gamma) \hat{x} - \eta] + (F^T\hat y_2 + F^T\hat{y}_1 + \tilde{F}^T\hat\beta_1) = 0, \\
 &P(T) = G,\quad \varphi(T) = - G(\bar{\Gamma} \hat{x}(T) + \bar{\eta}) - \bar{\Gamma}^TG[(I - \bar{\Gamma}) \hat{x}(T) - \bar{\eta}].
 \end{aligned}
 \right.$}
 \end{equation*}
 Mean-field terms $\hat{x}$, $\hat{y}_2$, $\hat{y}_1$, $\hat{\beta}_1$ are determined by $\hat{x} = \mathbb{E}\check{x}$, $\hat y_2 = \check y_2$, $\hat y_1 = \mathbb{E}\check y_1$, $\hat\beta_1 = \mathbb{E}\check\beta_1$ and $(\check{x}, \check y_2, \check{y}_1, \check{\beta}_1)$ is the solution of CC system \eqref{Eq31}. $\tilde{x}_i$ is realized state  satisfying
 \begin{equation}\label{realized state}
 \left\{
 \begin{aligned}
 & d\tilde{x}_i = (A\tilde{x}_i \!+\! B(\Theta_1\tilde{x}_i \!+\! \Theta_2) \!+\! F\tilde{x}^{(N)})dt \!+\! (C\tilde{x}_i \!+\! D(\Theta_1\tilde{x}_i \!+\! \Theta_2) \!+\! \tilde{F}\tilde{x}^{(N)})dW_i, \\
 & \tilde{x}_i(0) = \xi_0, \\
 \end{aligned}
 \right.
\end{equation}
where $\tilde{x}^{(N)} \triangleq \frac{1}{N}\sum_{i=1}^{N}\tilde{x}_i $.
\end{theorem}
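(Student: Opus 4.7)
The plan is to treat Theorem \ref{them1} as the culmination of the bottom-up construction carried out in Sections 4 and 5 rather than as a freshly posed problem. In essence, every ingredient in the statement has already been produced: the Riccati equation and the backward ODE for $\varphi$ come from Lemma \ref{Lem5.1}, the mean-field data come from the CC system, and the realized dynamics \eqref{realized state} is just the closed-loop version of \eqref{Eq1}. The job is therefore to verify that these pieces fit together consistently.

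First I would invoke (A\ref{a5}) to obtain the unique quadruple $(\check{x}, \check{y}_1, \check{y}_2, \check{\beta}_1)$ solving the CC system \eqref{Eq31}, and then define the deterministic mean-field processes $\hat{x} = \mathbb{E}\check{x}$, $\hat{y}_2 = \check{y}_2$, $\hat{y}_1 = \mathbb{E}\check{y}_1$, $\hat{\beta}_1 = \mathbb{E}\check{\beta}_1$. Under (A\ref{A1})-(A\ref{A2}), these objects lie in $L^2(0,T;\mathbb{R}^n)$, so that the driving term $q_1$ in Problem \ref{P3} is well-defined. Next I would apply Lemma \ref{Lem5.1} with this frozen data: (A\ref{A3}) delivers the unique regular solution $P$ of the Riccati equation together with the nondegeneracy $R + D^TPD \gg 0$, and $\varphi$ is then the unique solution of a linear backward ODE with bounded coefficients and $L^2$ source. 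This yields the deterministic feedback gains $\Theta_1, \Theta_2$ in \eqref{Eq26}, and hence the candidate $\tilde{u}_i = \Theta_1 \tilde{x}_i + \Theta_2$.

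With $\Theta_1, \Theta_2$ in hand, the realized state equation \eqref{realized state} is a weakly-coupled linear SDE system with bounded deterministic coefficients, so under (A\ref{A1}) it admits a unique strong solution $(\tilde{x}_1,\dots,\tilde{x}_N)\in L^2_{\mathbb{F}}(0,T;\mathbb{R}^{Nn})$ by standard well-posedness theory. The closing step is the consistency check: the gain $\Theta_2$ was tuned against the \emph{limiting} mean-field $\hat{x} = \mathbb{E}\check{x}$, while the closed-loop dynamics are driven by the finite-population average $\tilde{x}^{(N)}$. The $L^2$ argument already performed just before \eqref{Eq30} shows $\tilde{x}^{(N)} \to \mathbb{E}\check{x}$ in $L^2(0,T;\mathbb{R}^n)$ as $N\to\infty$, which, combined with uniqueness in Lemma \ref{lemma 3}, closes the fixed-point loop at the mean-field level.

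The main obstacle I anticipate is the mismatch between the formally $\mathbb{F}$-adapted realized state $\tilde{x}_i$ (which through $\tilde{x}^{(N)}$ sees all $W_j$) and the decentralized admissibility requirement $\tilde{u}_i \in L^2_{\mathbb{F}^i}(0,T;\mathbb{R}^m)$. The cleanest resolution is to read the theorem as an existence statement for a feedback \emph{form} control rather than an exact-optimality statement in $\mathcal{U}_i$; true social optimality is asymptotic and is deferred to Section 6, where the Lyapunov-based error analysis handles the error terms $\varepsilon_1,\dots,\varepsilon_6$ generated by substituting $\hat{x}$ for $\tilde{x}^{(N)}$. Under this reading the proof reduces to chaining Lemma \ref{Lem5.1}, the ODE well-posedness for $\varphi$, and (A\ref{a5}); I would record this assembly explicitly and defer all error estimates to the subsequent section.
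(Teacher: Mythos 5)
Your proposal is correct and matches the paper's intent: the paper offers no argument for Theorem \ref{them1} beyond the single line ``By Lemma \ref{Lem5.1}, we have the following theorem,'' so your assembly of Lemma \ref{Lem5.1}, the linear backward ODE for $\varphi$, assumption (A\ref{a5}) for the CC system, and standard well-posedness of the closed-loop SDE is precisely the implicit proof. Your explicit flagging of the adaptedness mismatch (that $\tilde{x}_i$ sees all $W_j$ through $\tilde{x}^{(N)}$, so the theorem must be read as producing a feedback \emph{form} whose optimality is only asymptotic and is established in Section 6) is a point the paper glosses over, and your resolution is the correct one.
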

Through the discussion above, the mean-field decentralized control has been characterized in Theorem \ref{them1}. In what follows, the performance of this mean-field decentralized control will be studied. Specifically, the asymptotic optimality will be proved.

\section{Asymptotic social optimality }
In this section, we will prove that the mean-field decentralized control given by {Theorem \ref{them1}} is asymptotically optimal. We introduce a new generic approach, which is different from traditional mean-field games scheme (e.g., \cite{bensoussan2016linear}, \cite{nie}, \cite{h2010}), where the authors usually use the auxiliary cost functional as a bridge to obtain the asymptotic optimality. Also, our method is different from some reduced social optima models (e.g., \cite{hcm2012}, \cite{bjj2017}, \cite{wang2019social}), where the {optimality} loss can be calculated directly with completing square method. Specifically, we estimate the social optimality loss by studying the Fr\'{e}chet differential of the social cost functional.

Firstly, we
distinguish the realized state and the optimal state of auxiliary problem. Let $\tilde{u}\triangleq(\tilde{u}_1,\cdots,\tilde{u}_N)$ be the mean-field decentralized control given by {Theorem \ref{them1}}, where $\tilde{u}_i = \Theta_1\tilde{x}_i + \Theta_2$. The realized decentralized state $\tilde{x}_i$ satisfies \eqref{realized state} which depends on the state-average $\tilde{x}^{(N)}$, while the optimal auxiliary state $\bar{\alpha}_i$ satisfies \eqref{Eq28} which depends on the mean-field term $\hat{x}$, and the optimal auxiliary control is $\bar{v}_i = \Theta_1\bar{\alpha}_i + \Theta_2$.
Correspondingly, the social cost functional $\mathcal{J}_i$ w.r.t the mean-field decentralized control $\tilde{u}$ is
\begin{equation*}
\resizebox{\textwidth}{!}{$
 \begin{aligned} \mathcal{J}_i(\tilde{u}_i,\tilde{u}_{-i}) = \frac{1}{2}\mathbb{E}\bigg\{\int_{0}^{T}\|\tilde{x}_i - \Gamma \tilde x^{(N)} - \eta\|_{Q}^2 + \|\tilde{u}_i\|^2_{R} dt+\|\tilde{x}_i(T) - \bar{\Gamma} \tilde{x}^{(N)}(T) - \bar{\eta}\|_{G}^2\bigg\}, \\
 \end{aligned}$}
\end{equation*}
and the auxiliary cost functional ${J}_i$ w.r.t the optimal auxiliary control $\bar{v}_i$ is
\begin{equation*}
 \begin{aligned} {J}_i(\bar{v}_i) = \frac{1}{2}\mathbb{E}\bigg\{\int_{0}^{T}\|\bar{\alpha}_i - \Gamma \hat{x} - \eta\|_{Q}^2 + \|\bar{v}_i\|^2_{R} dt+\|\bar{\alpha}_i(T) - \bar{\Gamma} \hat{x}(T) - \bar{\eta}\|_{G}^2\bigg\}. \\
 \end{aligned}
\end{equation*}
Next, we present the definition of asymptotic optimality.
\begin{definition}
A decentralized control $u^\varepsilon\triangleq(u^\varepsilon_1,\cdots,u^\varepsilon_N)\in \mathcal{U}_1\times\cdots\times\mathcal{U}_N$ has asymptotic social optimality if
\begin{equation*}
 \frac{1}{N}\Big|\inf_{u\in \mathcal{U}_c}\mathcal{J}_{soc}^{(N)}({u}) - \mathcal{J}_{soc}^{(N)}({u}^\varepsilon)\Big| = \varepsilon(N),\quad \varepsilon(N)\rightarrow0 \text{, when } N\rightarrow\infty,
\end{equation*}
where $\mathcal{U}_c$ is defined in Section 2 as a set of centralized information-based control.
\end{definition}
\subsection{Preliminary estimations}
To study the asymptotic optimality, we first provide several prior lemmas which will play a significant role in future analysis. 
In the discussion below, for sake of notation simplicity, we will use $L$ to denote a generic constant whose value may change from line to line and only depend on the coefficients (i.e., $A$, $B$, $C$, $D$, $F$, $\tilde{F}$, $\Gamma$, $\eta$, $Q$, $R$, $\xi_0$). 
\begin{lemma}\label{lemma 4}
Under {\rm\textbf{(A\ref{A1})}-\textbf{(A\ref{a5})}}, there exists some constant $L$ such that
\begin{equation*}
\begin{aligned}
\sup_{1\leq i\leq N}\mathbb{E}\sup_{0\leq t\leq T}\|\tilde{x}_i(t)\|^2 \leq L,\quad
\sup_{0\leq t\leq T}\mathbb{E}\|\tilde x^{(N)}(t)\|^2 \leq L,\quad \mathbb{E}\int_{0}^{T}\|\tilde{u}_i\|^2dt\leq L,
\end{aligned}
\end{equation*}
where $\tilde{u}_i = \Theta_1\tilde{x}_i + \Theta_2$.
\end{lemma}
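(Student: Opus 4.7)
The plan is a routine a priori estimate for the closed-loop SDE \eqref{realized state}, split into four stages. First I would verify that the feedback gains are well-behaved. Under \textbf{(A\ref{A1})}--\textbf{(A\ref{A3})} the Riccati equation \eqref{Eq25} admits a unique regular solution $P\in L^\infty(0,T;\mathbb{S}^n)$ with $(R+D^TPD)\gg 0$ and hence $(R+D^TPD)^{-1}\in L^\infty$, so $\Theta_1\in L^\infty(0,T;\mathbb{R}^{m\times n})$. Under \textbf{(A\ref{a5})} the CC system \eqref{Eq31} has a unique solution, from which $\hat{x}=\mathbb{E}\check{x}$, $\hat{y}_1=\mathbb{E}\check{y}_1$, $\hat{\beta}_1=\mathbb{E}\check{\beta}_1$, $\hat{y}_2=\check{y}_2$ are fixed deterministic functions belonging to $L^2(0,T;\mathbb{R}^n)$; plugging them into the linear ODE \eqref{Eq27} gives $\varphi\in L^\infty(0,T;\mathbb{R}^n)$, so $\Theta_2\in L^2(0,T;\mathbb{R}^m)$. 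Thus the drift and diffusion of \eqref{realized state} take the form $(\widehat{A}\tilde{x}_i+F\tilde{x}^{(N)}+b)\,dt+(\widehat{C}\tilde{x}_i+\tilde{F}\tilde{x}^{(N)}+\sigma)\,dW_i$ with $\widehat{A},\widehat{C}\in L^\infty$ and $b,\sigma\in L^2$.

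Next I would derive the two second-moment bounds jointly, exploiting the exchangeability of the agents. By It\^o's formula applied to $\|\tilde{x}_i\|^2$ and $\|\tilde{x}^{(N)}\|^2$, together with the independence of the $W_i$'s (so that cross-terms vanish in the diffusion part of $\tilde{x}^{(N)}$), setting $a(t)\triangleq\mathbb{E}\|\tilde{x}_i(t)\|^2$ (symmetric in $i$) and $b(t)\triangleq\mathbb{E}\|\tilde{x}^{(N)}(t)\|^2$ I obtain
\begin{equation*}
a(t)\leq L+L\int_0^t\!\!\bigl(a(s)+b(s)\bigr)ds,\qquad b(t)\leq L+L\int_0^t\!\! b(s)\,ds+\tfrac{L}{N}\int_0^t\!\! a(s)\,ds.
\end{equation*}
Adding the two and applying Gronwall's inequality yields $a(t)+b(t)\leq L$ uniformly in $t\in[0,T]$ and (by exchangeability) uniformly in $i$. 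This gives the second bound $\sup_{t}\mathbb{E}\|\tilde{x}^{(N)}(t)\|^2\leq L$.

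To upgrade to the pathwise supremum I would apply the Burkholder--Davis--Gundy inequality to the stochastic integral in \eqref{realized state}: writing $M_i(t)=\int_0^t(\widehat{C}\tilde{x}_i+\tilde{F}\tilde{x}^{(N)}+\sigma)dW_i$ one has $\mathbb{E}\sup_{s\leq t}\|M_i(s)\|^2\leq L\int_0^t\bigl(\mathbb{E}\|\tilde{x}_i(s)\|^2+\mathbb{E}\|\tilde{x}^{(N)}(s)\|^2+\|\sigma(s)\|^2\bigr)ds$, which is already controlled by the previous step. Combined with the drift estimate via Cauchy--Schwarz one gets
\begin{equation*}
\mathbb{E}\sup_{s\leq t}\|\tilde{x}_i(s)\|^2\leq L+L\int_0^t\mathbb{E}\sup_{r\leq s}\|\tilde{x}_i(r)\|^2\,ds,
\end{equation*}
and Gronwall closes this to deliver $\sup_i\mathbb{E}\sup_{t\leq T}\|\tilde{x}_i(t)\|^2\leq L$. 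Finally, the control bound follows directly from $\tilde{u}_i=\Theta_1\tilde{x}_i+\Theta_2$, the $L^\infty$ bound on $\Theta_1$, the $L^2$ bound on $\Theta_2$, and the preceding moment estimate.

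The only genuinely delicate point is the coupling between $\tilde{x}_i$ and $\tilde{x}^{(N)}$: because of the presence of $F,\tilde{F}\neq 0$ one cannot bound $\tilde{x}_i$ without simultaneously controlling the empirical average. The symmetric ``add-and-Gronwall'' trick above is what closes the loop, and it relies crucially on the fact that the diffusion contribution of $\tilde{x}^{(N)}$ carries a factor $1/N$ after taking expectation of the square, so that $a$ and $b$ do not blow up through mutual feedback. Everything else is standard SDE machinery given the a priori regularity of $\Theta_1,\Theta_2$ guaranteed by \textbf{(A\ref{A1})}--\textbf{(A\ref{a5})}.
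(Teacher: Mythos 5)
Your proof is correct. For comparison: the paper does not actually write out an argument for this lemma --- it simply refers the first two bounds to Section 5 of the cited reference \cite{nie} and asserts that the third ``could be obtained easily'' from the boundedness of $\tilde{x}_i$ and $\varphi$. Your proposal supplies exactly the standard a priori estimate that such a citation implicitly invokes, and it handles correctly the one point that is genuinely specific to this model: because $F,\tilde F\neq 0$, the individual state and the empirical average feed back into each other, so neither $a(t)=\mathbb{E}\|\tilde x_i(t)\|^2$ nor $b(t)=\mathbb{E}\|\tilde x^{(N)}(t)\|^2$ can be closed alone; your symmetric ``add-and-Gronwall'' step, using exchangeability of the agents and the independence of the $W_i$ (which kills the cross-terms and puts the factor $1/N$ in front of the diffusion contribution to $b$), is the right way to close the loop. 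Two minor remarks: since $\varphi$ solves a linear ODE with integrable source and $\hat x=\mathbb{E}\check x$ is continuous on $[0,T]$, one in fact has $\Theta_2\in L^\infty$, though $L^2$ suffices as you use it; and once the moment bounds $a,b\leq L$ are in hand, the Burkholder--Davis--Gundy step already yields $\mathbb{E}\sup_{t\leq T}\|\tilde x_i\|^2\leq L$ directly, so the final Gronwall application there is harmless but redundant.
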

\begin{proof}
By referring \cite[Section 5]{nie}, we have the first and the second inequality. Based on the boundness of $\tilde{x}_i$ and $\varphi$, the third inequality could be obtained easily. The detailed proof is omitted here.
\end{proof}
Next, the estimation of the difference between the mean-field term $\hat{x}$ and the realized state-average $\tilde{x}^{(N)}$ is as follows.
\begin{lemma} \label{lemma 6}
Under {\rm\textbf{(A\ref{A1})}-\textbf{(A\ref{a5})}}, It follows that
 \begin{equation}
 \mathbb{E}\sup_{0\leq t \leq T}\|\tilde{x}^{(N)} - \hat{x}\|^2 = O\bigg(\frac{1}{N}\bigg).
 \end{equation}
\end{lemma}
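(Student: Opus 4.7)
The plan is to work directly with the SDE satisfied by the error $\delta(t) \triangleq \tilde{x}^{(N)}(t) - \hat{x}(t)$ and exploit the independence of the driving Brownian motions $W_i$ to gain the factor $1/N$ from the diffusion, then close by Gronwall.

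First I would write down the dynamics of both terms. Summing \eqref{realized state} over $i$ and dividing by $N$ gives
\begin{equation*}
d\tilde{x}^{(N)} = \bigl[(A+B\Theta_1+F)\tilde{x}^{(N)} + B\Theta_2\bigr]dt + \frac{1}{N}\sum_{i=1}^{N}\bigl[(C+D\Theta_1)\tilde{x}_i + D\Theta_2 + \tilde{F}\tilde{x}^{(N)}\bigr]dW_i,
\end{equation*}
while taking expectation in \eqref{Eq28} shows that $\hat{x}=\mathbb{E}\bar{\alpha}_i$ solves the deterministic ODE $d\hat{x} = [(A+B\Theta_1+F)\hat{x} + B\Theta_2]dt$, with $\hat{x}(0)=\xi_0$. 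Subtracting yields the key identity
\begin{equation*}
\delta(t) = \int_0^t (A+B\Theta_1+F)(s)\,\delta(s)\,ds + M(t), \qquad M(t) \triangleq \frac{1}{N}\sum_{i=1}^{N}\int_0^t \sigma_i(s)\,dW_i(s),
\end{equation*}
where $\sigma_i(s) = (C+D\Theta_1)\tilde{x}_i(s) + D\Theta_2(s) + \tilde{F}\tilde{x}^{(N)}(s)$. Notice that $\delta(0)=0$ and the drift in the error equation is linear in $\delta$ with bounded coefficients (under \textbf{(A1)}, since $\Theta_1$ is deterministic and bounded via Lemma \ref{Lem5.1}).

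Next I would estimate $M(t)$. Since the $W_i$ are mutually independent, the cross-quadratic-variation terms vanish, so by the Burkholder--Davis--Gundy inequality,
\begin{equation*}
\mathbb{E}\sup_{0\le t\le T}\|M(t)\|^2 \le \frac{L}{N^2}\sum_{i=1}^{N}\mathbb{E}\int_0^T \|\sigma_i(s)\|^2\,ds.
\end{equation*}
By Lemma \ref{lemma 4}, $\sup_i \mathbb{E}\sup_{0\le t\le T}\|\tilde{x}_i(t)\|^2 \le L$ and $\sup_t \mathbb{E}\|\tilde{x}^{(N)}(t)\|^2 \le L$, while $\Theta_2$ is bounded deterministically, so $\mathbb{E}\int_0^T \|\sigma_i\|^2\,ds \le L$ uniformly in $i$. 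Thus $\mathbb{E}\sup_{0\le t\le T}\|M(t)\|^2 = O(1/N)$.

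Finally I would apply a standard Gronwall argument. Using $\|a+b\|^2 \le 2\|a\|^2+2\|b\|^2$ and Cauchy--Schwarz on the drift term,
\begin{equation*}
\mathbb{E}\sup_{0\le s\le t}\|\delta(s)\|^2 \le 2L\int_0^t \mathbb{E}\sup_{0\le r\le s}\|\delta(r)\|^2\,ds + 2\mathbb{E}\sup_{0\le s\le T}\|M(s)\|^2,
\end{equation*}
and Gronwall yields $\mathbb{E}\sup_{0\le t\le T}\|\delta(t)\|^2 \le L e^{2LT}/N = O(1/N)$, as required. The only subtle point, and the step I would double-check, is the BDG bound: it is crucial that the averaging is over independent Brownian motions so that the variances add rather than the $L^2$ norms, producing the desired $1/N$ scaling rather than $O(1)$; the pointwise boundedness of the $\mathbb{E}\|\tilde{x}_i\|^2$ (already established in Lemma \ref{lemma 4}) is what makes the argument self-contained without any implicit dependence on $\delta$ on the right-hand side.
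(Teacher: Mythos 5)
Your proposal is correct and follows essentially the same route as the paper's proof in Appendix A: write the SDE for $\tilde{x}^{(N)}-\hat{x}$ (whose drift is linear in the error with bounded coefficients and whose diffusion is the $\frac{1}{N}$-average of $N$ integrals against independent Brownian motions), bound the martingale part by $O(1/N)$ via Burkholder--Davis--Gundy together with the uniform second-moment bounds of Lemma \ref{lemma 4}, and close with Gronwall's inequality. The only cosmetic difference is that you derive the ODE for $\hat{x}$ by taking expectations in \eqref{Eq28} rather than reading it off the CC system \eqref{Eq31}, which is equivalent under the consistency condition.
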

\begin{proof}
See Appendix A.
\end{proof}
Based on Lemma \ref{lemma 6} we have the following estimation.

\begin{lemma}\label{lemma 7}
Under {\rm\textbf{(A\ref{A1})}-\textbf{(A\ref{a5})}}, for some constant $L$ such that
\begin{equation*}
\begin{aligned}
\sup_{1\leq i\leq N}\mathbb{E}\sup_{0\leq t \leq T}\|\tilde{x}_i - \bar{\alpha}_i\|^2<L,\quad
\sup_{1\leq i\leq N}\mathbb{E}\sup_{0\leq t \leq T}\|\tilde{x}_i - \hat{x}\|^2<L.
 \end{aligned}
\end{equation*}
\end{lemma}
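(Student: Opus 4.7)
The plan is to exploit the fact that $\tilde x_i$ and $\bar\alpha_i$ satisfy identical linear SDEs except that $\tilde x_i$ is driven by the empirical average $\tilde x^{(N)}$ while $\bar\alpha_i$ is driven by the mean-field term $\hat x$. Subtracting \eqref{Eq28} from \eqref{realized state}, setting $e_i\triangleq \tilde x_i - \bar\alpha_i$, and noting that $\tilde u_i - \bar v_i = \Theta_1 e_i$, I obtain
\begin{equation*}
de_i = \bigl[(A+B\Theta_1)e_i + F(\tilde x^{(N)} - \hat x)\bigr]dt + \bigl[(C+D\Theta_1)e_i + \tilde F(\tilde x^{(N)} - \hat x)\bigr]dW_i,\quad e_i(0)=0.
\end{equation*}
By Lemma \ref{Lem5.1} and assumption \textbf{(A1)}, the coefficients $A+B\Theta_1$ and $C+D\Theta_1$ lie in $L^\infty(0,T;\mathbb{R}^{n\times n})$, so this is a standard linear SDE with bounded coefficients.

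First I would apply It\^o's formula to $\|e_i\|^2$, take supremum in $t$ and expectation, and use the Burkholder--Davis--Gundy inequality to control the stochastic integral. After absorbing the martingale term, this yields an inequality of the form
\begin{equation*}
\mathbb{E}\sup_{0\leq s\leq t}\|e_i(s)\|^2 \leq L\int_{0}^{t}\mathbb{E}\sup_{0\leq r\leq s}\|e_i(r)\|^2\,ds + L\int_{0}^{T}\mathbb{E}\|\tilde x^{(N)}(s) - \hat x(s)\|^2\,ds.
\end{equation*}
By Lemma \ref{lemma 6}, the second term is $O(1/N)$ and hence uniformly bounded. Gronwall's inequality then delivers $\mathbb{E}\sup_{0\leq t\leq T}\|e_i\|^2 \leq L$, with the constant independent of $i$ because the coefficients and the forcing $\tilde x^{(N)} - \hat x$ do not depend on $i$. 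This establishes the first inequality.

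For the second inequality I would use the triangle decomposition $\|\tilde x_i - \hat x\|^2 \leq 2\|\tilde x_i - \bar\alpha_i\|^2 + 2\|\bar\alpha_i - \hat x\|^2$. The first term is controlled by what was just proved. For the second term, $\bar\alpha_i$ satisfies the linear SDE \eqref{Eq28} with bounded coefficients driven by the deterministic function $\hat x$; since assumption \textbf{(A4)} gives well-posedness of the CC system \eqref{Eq31}, the function $\hat x = \mathbb{E}\check x$ lies in $L^\infty(0,T;\mathbb{R}^n)$, so classical SDE estimates give $\mathbb{E}\sup_{t}\|\bar\alpha_i\|^2 \leq L$ and consequently $\mathbb{E}\sup_{t}\|\bar\alpha_i - \hat x\|^2 \leq L$.

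The main subtlety is keeping the constant $L$ uniform in both $i$ and $N$. Uniformity in $i$ follows because the only $i$-dependent objects in the SDE for $e_i$ are the Brownian motion $W_i$ and the initial condition (which is the same $\xi_0$ for all agents); uniformity in $N$ hinges on Lemma \ref{lemma 6}, which provides an $N$-independent bound (in fact of order $1/N$) on $\mathbb{E}\sup_t\|\tilde x^{(N)} - \hat x\|^2$, preventing the Gronwall constant from degrading as the population grows.
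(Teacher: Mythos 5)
Your proof is correct, and for the first inequality it coincides with the paper's argument: the same SDE for $e_i=\tilde{x}_i-\bar{\alpha}_i$ obtained by subtracting \eqref{Eq28} from \eqref{realized state}, followed by Burkholder--Davis--Gundy, Lemma \ref{lemma 6} and Gronwall (in fact, since the forcing $\tilde{x}^{(N)}-\hat{x}$ is $O(1/N)$ in $L^2$, both you and the paper actually get the stronger bound $O(1/N)$ here). For the second inequality you take a slightly different route: you pass through the triangle decomposition $\|\tilde{x}_i-\hat{x}\|^2\leq 2\|\tilde{x}_i-\bar{\alpha}_i\|^2+2\|\bar{\alpha}_i-\hat{x}\|^2$ and then invoke a classical a priori bound on $\bar{\alpha}_i$ from \eqref{Eq28}, whereas the paper writes the SDE for $\tilde{x}_i-\hat{x}$ directly and controls its (non-small) diffusion term $(C+D\Theta_1)\tilde{x}_i+D\Theta_2+\tilde{F}\tilde{x}^{(N)}$ via the a priori bounds of Lemma \ref{lemma 4}. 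The two routes use the same ingredients (boundedness of the relevant states, BDG, Gronwall, Lemma \ref{lemma 6}) and deliver the same uniform-in-$i$ constant; yours avoids writing a second SDE at the cost of needing the moment bound for $\bar{\alpha}_i$, which indeed holds uniformly in $i$ since the $\bar{\alpha}_i$ are identically distributed. No gap.
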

\begin{proof}
See Appendix A.
\end{proof}
Moreover, based on Lemma \ref{lemma 4}, \ref{lemma 6}, \ref{lemma 7}, we have the following estimation of the social cost functional.
\begin{lemma}\label{lemma 8}
 Under {\rm\textbf{(A\ref{A1})}-\textbf{(A\ref{a5})}}, for some constant $L$ such that
 \[\mathcal{J}^{(N)}_{soc}(\tilde{u}_1,\cdots,\tilde{u}_N) \leq NL.\]
\end{lemma}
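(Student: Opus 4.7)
The plan is to bound each individual cost $\mathcal{J}_i(\tilde u)$ by a constant independent of $i$ and $N$, then sum over $i=1,\dots,N$. Concretely, I would start from the definition
\[
\mathcal{J}_i(\tilde u) = \tfrac{1}{2}\mathbb{E}\!\int_0^T \!\!\|\tilde x_i-\Gamma\tilde x^{(N)}-\eta\|_Q^2+\|\tilde u_i\|_R^2\,dt+\tfrac{1}{2}\mathbb{E}\|\tilde x_i(T)-\bar\Gamma\tilde x^{(N)}(T)-\bar\eta\|_G^2,
\]
apply the elementary inequality $\|a+b+c\|_M^2 \leq 3\|M\|(\|a\|^2+\|b\|^2+\|c\|^2)$ term by term, and invoke assumption (A2) so that $\|Q\|_\infty$, $\|R\|_\infty$, $\|G\|$, $\|\Gamma\|_\infty$, $\|\bar\Gamma\|$, $\|\eta\|_{L^2}$, $\|\bar\eta\|$ are all bounded by a constant $L$.

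This reduces the task to estimating the six quantities
\[
\mathbb{E}\!\int_0^T\!\!\|\tilde x_i\|^2 dt,\ \mathbb{E}\!\int_0^T\!\!\|\tilde x^{(N)}\|^2 dt,\ \mathbb{E}\!\int_0^T\!\!\|\tilde u_i\|^2 dt,\ \mathbb{E}\|\tilde x_i(T)\|^2,\ \mathbb{E}\|\tilde x^{(N)}(T)\|^2,
\]
each uniformly in $i$ and $N$. The first, third, fourth estimates follow immediately from Lemma~\ref{lemma 4} (the supremum-in-time bound $\sup_i\mathbb{E}\sup_t\|\tilde x_i\|^2\leq L$ trivially yields bounds on $\mathbb{E}\int_0^T\|\tilde x_i\|^2 dt$ and on $\mathbb{E}\|\tilde x_i(T)\|^2$, while the control bound is direct). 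The second and fifth follow from $\sup_t\mathbb{E}\|\tilde x^{(N)}\|^2\leq L$, also from Lemma~\ref{lemma 4}. Thus
\[
\mathcal{J}_i(\tilde u)\leq L,\qquad \forall\,1\leq i\leq N,
\]
with $L$ independent of both $i$ and $N$.

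Summing over $i$ gives $\mathcal{J}^{(N)}_{soc}(\tilde u)=\sum_{i=1}^N \mathcal{J}_i(\tilde u)\leq NL$, as required. There is no real obstacle here: the lemma is a direct accounting step that packages the uniform a priori estimates of Lemma~\ref{lemma 4} into the cost-functional scale. The only point requiring mild care is to verify that the constants extracted from (A2) (essentially $L^\infty$ norms of the weights) and from the Cauchy--Schwarz/triangle inequalities are genuinely $N$-free; this is clear because all the coefficient matrices and the inhomogeneous terms $\eta,\bar\eta$ are fixed data independent of $N$. Consequently, the scaling of the bound comes entirely from the summation $\sum_{i=1}^N$, giving the advertised $O(N)$ growth.
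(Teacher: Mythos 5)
Your proof is correct, and it reaches the bound by a slightly more economical route than the paper. Both arguments sum the individual costs and bound each $\mathcal{J}_i(\tilde u)$ by an $N$-free constant, but the paper first inserts $\pm\Gamma\hat{x}$ (resp.\ $\pm\bar\Gamma\hat{x}(T)$) into the quadratic terms and then invokes Lemma~\ref{lemma 6} for $\mathbb{E}\sup_t\|\tilde x^{(N)}-\hat x\|^2=O(1/N)$ and Lemma~\ref{lemma 7} for $\sup_i\mathbb{E}\sup_t\|\tilde x_i-\hat x\|^2\leq L$, whereas you expand $\|\tilde x_i-\Gamma\tilde x^{(N)}-\eta\|_Q^2$ directly and need only the raw a priori bounds of Lemma~\ref{lemma 4} on $\tilde x_i$, $\tilde x^{(N)}$ and $\tilde u_i$. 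Since only an upper bound of order $O(N)$ is needed here (the $O(1/N)$ refinement from Lemma~\ref{lemma 6} buys nothing at this stage), your version is logically sufficient and has fewer dependencies; the paper's decomposition is presumably chosen for uniformity with the later error analysis, where the splitting through $\hat x$ is essential. One point worth making explicit in your write-up: since $Q$ and $G$ may be indefinite here, the inequality $\|a+b+c\|_M^2\leq 3\|M\|\big(\|a\|^2+\|b\|^2+\|c\|^2\big)$ should be justified via $x^TMx\leq\lambda_{\max}(M)\|x\|^2\leq\|M\|\,\|x\|^2$ together with the nonnegativity of the right-hand side; this holds, so no gap results, but it is the one place where the indefinite-weight setting of the paper could trip up a purely formal estimate.
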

\begin{proof}
See Appendix A.
\end{proof}
{Since we are studying the asymptotic optimality of $\tilde{u}$, it is sufficient only to  consider those admissible control  perform better then $\tilde{u}$. Specifically, we only consider those admissible controls $\acute{u}$ satisfying
\begin{equation}\label{a3}
 \mathcal{J}_{soc}^{(N)}(\acute{u}) \leq \mathcal{J}^{N}_{soc}(\tilde{u})\leq NL.
\end{equation}
For these admissible controls satisfying \eqref{a3}, we have the following estimation.}
\begin{proposition}\label{coro 1}
Under {\rm\textbf{(A\ref{A1})}-\textbf{(A\ref{a5})}}, for any admissible control $\acute{u}\in\mathcal{U}_c$ satisfying \eqref{a3}, there exists a constant $L$ such that
 \begin{equation*}
 \begin{aligned}
 \sum_{i=1}^{N}\mathbb{E}\int_{0}^{T}\|\acute{u}_i\|^2dt \leq NL. \\
 \end{aligned}
 \end{equation*}
\end{proposition}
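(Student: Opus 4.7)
The plan is to leverage the uniform convexity of $\mathcal{J}^{(N)}_{soc}$ granted by Assumption (A3) as a coercivity estimate on the control norm. Since the high-dimensional state equation \eqref{Eq4} is linear in $(\mathbf{x},\mathbf{u})$ and the cost \eqref{Eq6} is quadratic, I would first decompose $\mathbf{x} = \mathbf{x}^{\mathbf{u},0} + \mathbf{x}^{0,\xi_0}$, where $\mathbf{x}^{\mathbf{u},0}$ solves \eqref{Eq4} with initial data $0$ and control $\mathbf{u}$, while $\mathbf{x}^{0,\xi_0}$ solves \eqref{Eq4} with zero control and initial data $\Xi$. Expanding the quadratic form \eqref{Eq6} along this splitting gives
\begin{equation*}
\mathcal{J}^{(N)}_{soc}(\mathbf{u},\xi_0) = \widetilde{\mathcal{J}}^{(N)}_{soc}(\mathbf{u},0) + \Lambda(\mathbf{u}) + c_0,
\end{equation*}
where $\widetilde{\mathcal{J}}^{(N)}_{soc}(\mathbf{u},0)$ is the homogeneous quadratic functional already used in Proposition \ref{prop3.3}, $\Lambda$ is a continuous linear functional of $\mathbf{u}$ encoding the cross terms between $\mathbf{x}^{\mathbf{u},0}$ and $\mathbf{x}^{0,\xi_0}$ (together with the $\mathbf{S}_1,\mathbf{S}_2$ contributions), and $c_0$ depends only on $\xi_0$, $\eta$, $\bar\eta$.

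Next, Assumption (A3) directly delivers, for some $\varepsilon>0$,
\begin{equation*}
\widetilde{\mathcal{J}}^{(N)}_{soc}(\mathbf{u},0) \;\geq\; \varepsilon\, \mathbb{E}\!\int_0^T\|\mathbf{u}(t)\|^2\,dt \;=\; \varepsilon\sum_{i=1}^N \mathbb{E}\!\int_0^T\|u_i\|^2\,dt.
\end{equation*}
For the linear and constant pieces, I would apply a standard Gronwall/It\^o $L^2$-estimate to $\mathbf{x}^{0,\xi_0}$ using \textbf{(A1)} to obtain $\sup_{t\in[0,T]}\mathbb{E}\|\mathbf{x}^{0,\xi_0}(t)\|^2 \leq L\|\Xi\|^2 = LN\|\xi_0\|^2$. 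Combined with the block structure of $\mathbf{Q},\mathbf{G},\mathbf{S}_1,\mathbf{S}_2$ in \eqref{Eq7} (whose diagonal blocks are $N$-independent and off-diagonal blocks carry a $\tfrac{1}{N}$ factor, so that $\|\mathbf{S}_1\|,\|\mathbf{S}_2\| = O(\sqrt{N})$), Cauchy-Schwarz yields
\begin{equation*}
|\Lambda(\mathbf{u})| \leq L\sqrt{N}\Bigl(\textstyle\sum_{i=1}^N\mathbb{E}\int_0^T\|u_i\|^2\,dt\Bigr)^{1/2}, \qquad |c_0|\leq LN.
\end{equation*}

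Denoting $\mathcal{N} := \sum_{i=1}^N\mathbb{E}\int_0^T\|\acute u_i\|^2\,dt$, the hypothesis $\mathcal{J}_{soc}^{(N)}(\acute{u})\leq NL$ together with the three bounds above gives
\begin{equation*}
\varepsilon \mathcal{N} \;\leq\; NL + L\sqrt{N}\sqrt{\mathcal{N}} + LN,
\end{equation*}
and applying $L\sqrt{N}\sqrt{\mathcal{N}}\leq \tfrac{\varepsilon}{2}\mathcal{N} + \tfrac{L^2}{2\varepsilon}N$ absorbs the middle term into the left-hand side to produce $\mathcal{N}\leq NL$ after renaming the constant. The main obstacle, and the only nontrivial point in the calculation, is verifying that the cross-coupling pieces from $\mathbf{S}_1,\mathbf{S}_2$ and from $\mathbf{x}^{0,\xi_0}$ genuinely scale like $\sqrt{N}$ rather than $N$; this is exactly what the $\tfrac{1}{N}$ normalization in the off-diagonal blocks of \eqref{Eq7} ensures, and it is precisely this scaling that makes the a priori bound $\mathcal{N}=O(N)$ (rather than $O(N^2)$) valid and hence useful for the subsequent asymptotic optimality analysis.
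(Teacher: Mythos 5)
Your proposal is correct, and it is the natural filling-in of the paper's own one-line proof, which simply asserts that the bound follows ``forthrightly'' from \eqref{a3}: the only mechanism available is exactly the one you use, namely the coercivity $\widetilde{\mathcal{J}}^{(N)}_{soc}(\mathbf{u},0)\geq\varepsilon\|\mathbf{u}\|_{L^2}^2$ implied by the uniform convexity assumption \textbf{(A3)}, combined with the $O(N)$ and $O(\sqrt{N}\sqrt{\mathcal{N}})$ bounds on the constant and linear parts of the quadratic cost. Your verification that the cross terms scale like $\sqrt{N}$ (via $\|\Xi\|^2=N\|\xi_0\|^2$, $\|\mathbf{S}_1\|,\|\mathbf{S}_2\|=O(\sqrt{N})$, and an $N$-independent Gronwall constant) is the substantive content the paper leaves implicit, with the only tacit point in both versions being that the convexity modulus $\varepsilon$ in \textbf{(A3)} is taken uniform in $N$.
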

\begin{proof}
By \eqref{a3}, this result can be obtained forthrightly.
\end{proof}
Next, we introduce a last lemma for single agent perturbation. Consider an admissible control $(\tilde{u}_1,\cdots,\tilde{u}_{i-1},\acute{u}_i,\tilde{u}_{i+1},\cdots, \tilde{u}_N)\in\mathcal{U}_c$. Note that here all the agents apply the mean-field decentralized control law given by Theorem \ref{them1} except $\mathcal{A}_i$. Correspondingly the agent state is denoted by $(\acute{x}_1, \cdots, \acute{x}_N)$. We denote $\delta u_i \triangleq \acute{u}_i - \tilde{u}_i$ and correspondingly $\delta x_j \triangleq \acute{x}_j - \tilde{x}_j$, $\acute{x}^{(N)} \triangleq \frac{1}{N}\sum_{j=1}^{N}\acute{x}_j$, $\delta x^{(N)} \triangleq \acute{x}^{(N)} - \tilde{x}^{(N)}$ for $j=1,\cdots,N$. Similarly, $\delta\alpha_i \triangleq \acute{\alpha}_i - \bar{\alpha}_i$, where $\bar{\alpha}_i$ is the optimal auxiliary state \eqref{Eq28} and $\acute{\alpha}_i $ is the $\delta u_i$-perturbed auxiliary state satisfying
\begin{equation*}
 \begin{aligned}
 d\acute{\alpha}_i = (A\acute{\alpha}_i + B\acute{v}_i + F\hat{x})dt + (C\acute{\alpha}_i + D\acute{v}_i + \tilde{F}\hat{x})dW_i,\quad\acute{\alpha}_i(0) = \xi_0. \\
 \end{aligned}
\end{equation*}
The $\delta u_i$-perturbed auxiliary control $\acute{v}_i = \Theta_1\bar{\alpha}_i + \Theta_2 + \delta u_i$.
\begin{lemma}\label{lemma 11}
Under {\rm\textbf{(A\ref{A1})}-\textbf{(A\ref{a5})}}, for some constant $L$ and any admissible control with form $(\tilde{u}_1$, $\cdots$, $\tilde{u}_{i-1}$, $\acute{u}_i$, $\tilde{u}_{i+1}$, $\cdots$, $\tilde{u}_N)\in\mathcal{U}_c$ satisfying $\mathbb{E}\int_{0}^{T}\|\acute{u}_i\|^2dt<L$, it follows that
\begin{equation*}
 \begin{aligned}
 \mathbb{E}\sup_{0\leq t\leq T}\|\delta x_i - \delta{a}_i\|^2 = O\Big(\frac{1}{N^2}\Big).
 \end{aligned}
 \end{equation*}
\end{lemma}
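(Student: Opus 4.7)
The plan is to reduce the estimate to a bound on $\mathbb{E}\int_0^T\|\delta x^{(N)}\|^2 dt$, and then extract the $1/N^2$ rate from the dynamics of the state average. First, subtracting the equation for $\bar{\alpha}_i$ from the one for $\acute{\alpha}_i$ we obtain
\[d\delta\alpha_i=(A\delta\alpha_i+B\delta u_i)dt+(C\delta\alpha_i+D\delta u_i)dW_i,\quad \delta\alpha_i(0)=0,\]
since $\acute v_i-\bar v_i=\delta u_i$. Subtracting the realized-state equation for $\tilde x_i$ from the one for $\acute x_i$ gives
\[d\delta x_i=(A\delta x_i+B\delta u_i+F\delta x^{(N)})dt+(C\delta x_i+D\delta u_i+\tilde F\delta x^{(N)})dW_i,\ \delta x_i(0)=0.\]
The key structural observation is that the $B\delta u_i,D\delta u_i$ terms cancel in the difference:
\[d(\delta x_i-\delta\alpha_i)=(A(\delta x_i-\delta\alpha_i)+F\delta x^{(N)})dt+(C(\delta x_i-\delta\alpha_i)+\tilde F\delta x^{(N)})dW_i,\]
with zero initial value. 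Applying Burkholder--Davis--Gundy and Gronwall in the standard way therefore gives
\[\mathbb{E}\sup_{0\le t\le T}\|\delta x_i-\delta\alpha_i\|^2\le L\,\mathbb{E}\int_0^T\|\delta x^{(N)}(s)\|^2ds,\]
so everything comes down to showing the right-hand side is $O(1/N^2)$.

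Next, I would derive the dynamics of $\delta x^{(N)}$ by summing over $j$: for $j\ne i$,
\[d\delta x_j=(A\delta x_j+F\delta x^{(N)})dt+(C\delta x_j+\tilde F\delta x^{(N)})dW_j,\quad \delta x_j(0)=0,\]
while $\delta x_i$ obeys the equation above. Averaging yields
\[d\delta x^{(N)}=\Bigl((A+F)\delta x^{(N)}+\tfrac{1}{N}B\delta u_i\Bigr)dt+\tfrac{1}{N}\sum_{j\ne i}(C\delta x_j+\tilde F\delta x^{(N)})dW_j+\tfrac{1}{N}(C\delta x_i+D\delta u_i+\tilde F\delta x^{(N)})dW_i.\]
The $\frac{1}{N}$ in the drift and, crucially, the independence of the $W_j$'s under It\^o's isometry contribute a factor $\frac{1}{N^2}\sum_j$ to the diffusion variance. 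Using $\mathbb{E}\int_0^T\|\delta u_i\|^2dt<L$ and the a priori bound $\mathbb{E}\sup_t\|\delta x_i\|^2\le L(1+\mathbb{E}\int_0^T\|\delta x^{(N)}\|^2ds)$, I would obtain, writing $\phi(t)=\mathbb{E}\|\delta x^{(N)}(t)\|^2$ and $I=\int_0^T\phi(s)ds$,
\[\phi(t)\le L\int_0^t\phi(s)ds+\frac{L}{N^2}+\frac{L}{N^2}\sum_{j\ne i}\int_0^T\mathbb{E}\|\delta x_j\|^2ds.\]

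For the last term, apply the standard SDE estimate to the decoupled equation for $\delta x_j$ (with $j\ne i$): $\mathbb{E}\|\delta x_j(t)\|^2\le L\int_0^T\phi(s)ds=LI$. Summing and using $(N-1)/N^2\le 1/N$ yields
\[\phi(t)\le L\int_0^t\phi(s)ds+\frac{L}{N^2}+\frac{L}{N}I.\]
Gronwall's lemma gives $\phi(t)\le(\frac{L}{N^2}+\frac{L}{N}I)e^{LT}$, hence $I\le\frac{LTe^{LT}}{N^2}+\frac{LTe^{LT}}{N}I$; for $N$ sufficiently large this bootstraps to $I=O(1/N^2)$, and consequently $\phi(t)=O(1/N^2)$ uniformly in $t$. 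A final BDG estimate on $\delta x^{(N)}$ (or directly on $\delta x_i-\delta\alpha_i$ combined with the $L^2$ bound already obtained on $\delta x^{(N)}$) promotes this to the supremum norm. Substituting into the first display completes the proof.

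The main obstacle is the third paragraph: one has to resist estimating $\mathbb{E}\|\delta x_j\|^2$ by a crude bound involving $\|\delta u_i\|^2$, because that would only deliver $O(1/N)$. The right bookkeeping is to exploit that $\delta x_j$ for $j\ne i$ has \emph{no} direct control forcing and is driven solely by $\delta x^{(N)}$, so that the coupling $(\phi,I)$ closes with a $\frac{1}{N}$-small self-coupling which, when solved for $I$, produces the extra factor of $1/N$ needed to reach $O(1/N^2)$.
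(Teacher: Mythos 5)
Your argument is correct and follows essentially the same route as the paper: derive the SDE for $\delta x_i-\delta\alpha_i$ (whose forcing is only $F\delta x^{(N)}$ and $\tilde F\delta x^{(N)}$ since the $B\delta u_i$, $D\delta u_i$ terms cancel), then apply Burkholder--Davis--Gundy and Gronwall together with the bound $\mathbb{E}\sup_t\|\delta x^{(N)}\|^2=O(1/N^2)$. The only difference is that the paper simply cites its Lemma \ref{lemma 9} for that $\delta x^{(N)}$ estimate, whereas you re-derive it inline by the same mechanism (no control forcing in $\delta x_j$ for $j\neq i$, plus the $1/N$ factors in the averaged dynamics), using a small bootstrap in $I=\int_0^T\phi$ where the paper absorbs the $\frac{L}{N}$-coupling directly into the Gronwall term.
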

\begin{proof}
See Appendix A.
\end{proof}

\subsection{Asymptotic optimality}
Next, we begin to estimate the optimality loss. Recalling Section 3, the large-population system can be rewritten as follows
\begin{equation}\label{eq 49}
 \left\{
 \begin{aligned}
 &\text{minimize: }\mathcal{J}^{(N)}_{soc}(\mathbf{u}) = \frac{1}{2}\mathbb{E}\bigg\{\int_{0}^{T} \mathbf{x}^T\mathbf{Q}\mathbf{x} + 2\mathbf{S}_1^T\mathbf{x} + N\eta^TQ\eta + \mathbf{u}^T\mathbf{R}\mathbf{u} dt\\
 &\hspace{4.7cm}+\mathbf{x}(T)^T\mathbf{G}\mathbf{x}(T) + 2\mathbf{S}_2^T\mathbf{x}(T) + N\bar{\eta}^TG\bar{\eta}\bigg\}, \\
 &\text{subject to: }\begin{aligned}
   &d\mathbf{x} = (\mathbf{A}\mathbf{x} + \mathbf{B}\mathbf{u})dt + \sum_{i = 1}^{N}(\mathbf{C}_i\mathbf{x} + \mathbf{D}_i\mathbf{u})dW_i, \quad \mathbf{x}(0) = {\Xi},\\
 \end{aligned}
 \end{aligned}
 \right.
\end{equation}
where $\mathbf{x}$, $\mathbf{u}$, $\mathbf{Q}$, $\mathbf{S}_1$, $\mathbf{S}_2$, $\mathbf{R}$, $\mathbf{G}$, $\mathbf{A}$, $\mathbf{B}$, $\mathbf{C}$, $\mathbf{D}$, $\Xi$ follow  \eqref{Eq5} and \eqref{Eq7}. For any given admissible $\mathbf{u}$, the state $\mathbf{x}$ can be determined by
\begin{equation}
\resizebox{\textwidth}{!}{$
 \begin{aligned}
 \mathbf{x}(t) = & \Phi(t)\Xi + \Phi(t)\int_{0}^{t} \Phi(s)^{-1}\big[(\mathbf{B} - \sum_{i = 1}^{N}\mathbf{C}_i\mathbf{D}_i)\mathbf{u}(s)\big]ds + \sum_{i = 1}^{N}\Phi(t) \int_{0}^{t}\Phi(s)^{-1}\mathbf{D}_i\mathbf{u}dW_i(s),
 \end{aligned}$}
\end{equation}
where
\begin{equation*}
 \begin{aligned}
 & d\Phi(t) = \mathbf{A}\Phi(t)dt + \sum_{i = 1}^{m}\mathbf{C}_i\Phi(t)dW_i, \quad \Phi(0) = I.
 \end{aligned}
\end{equation*}
Define the following operators
\begin{equation*}
\resizebox{\textwidth}{!}{$
 \left\{
 \begin{aligned}
 & (L\mathbf{u}(\cdot))(\cdot) \triangleq \Phi(\cdot)\Bigg\{\int_{0}^{\cdot}\Phi(s)^{-1}\big[(\mathbf{B} - \sum_{i = 1}^{N}\mathbf{C}_i\mathbf{D}_i)\mathbf{u}(s)\big]ds+ \sum_{i = 1}^{N} \int_{0}^{\cdot}\Phi(s)^{-1}\mathbf{D}_i\mathbf{u}dW_i(s)\Bigg\}, \\
 & \tilde{L}\mathbf{u}(\cdot) = (L\mathbf{u}(\cdot))(T),\quad \Gamma \Xi(\cdot) = \Phi(\cdot)\Phi^{-1}(0)\Xi,\quad \tilde{\Gamma}\Xi = (\Gamma \Xi)(T).
 \end{aligned}
 \right.$}
\end{equation*}
Correspondingly, $L^*$ is defined as the adjoint operator of $L$
(see \cite{yz1999}).
Given any admissible $\mathbf{u}$,  $\mathbf{x}$  can be represented as follows
\begin{equation*}
 \begin{aligned}
 & \mathbf{x}(\cdot) = (L\mathbf{u}(\cdot))(\cdot) + \Gamma \Xi(\cdot),\quad \mathbf{x}(T) = \tilde{L}\mathbf{u}(\cdot) + \tilde{\Gamma}\Xi,
 \end{aligned}
\end{equation*}
and the cost functional can be rewritten as
\begin{equation}
\begin{aligned}
2\mathcal{J}^{(N)}_{soc}(\mathbf{u}) =& \mathbb{E}\bigg\{\int_{0}^{T} \mathbf{x}^T\mathbf{Q}\mathbf{x} + 2\mathbf{S}_1^T\mathbf{x} + N\eta^TQ\eta + \mathbf{u}^T\mathbf{R}\mathbf{u} dt+\mathbf{x}(T)^T\mathbf{G}\mathbf{x}(T)\\
& + 2\mathbf{S}_2^T\mathbf{x}(T) + N\bar{\eta}^TG\bar{\eta}\bigg\}
\triangleq \langle M_2\mathbf{u}(\cdot),\mathbf{u}(\cdot)\rangle + 2\langle M_1,\mathbf{u}(\cdot)\rangle + M_0. \\
 \end{aligned}
\end{equation}
where
\begin{equation*}
\left\{
 \begin{aligned}
& M_2\triangleq L^*\mathbf{Q}L + \tilde{L}^*\mathbf{G}\tilde{L} + \mathbf{R}, \quad M_1\triangleq L^*(\mathbf{Q}\Gamma \Xi(\cdot) + \mathbf{S}_1)+\tilde{L}^*(\mathbf{G}\tilde{\Gamma} \Xi(\cdot)+\mathbf{S}_2),\\
& M_0\triangleq \langle \mathbf{Q}\Gamma \Xi(\cdot),\Gamma \Xi(\cdot)\rangle+2\langle\mathbf{S}_1 , \Gamma \Xi(\cdot)\rangle+2\langle\mathbf{S}_2 , \tilde{\Gamma} \Xi(\cdot)\rangle + TN\eta^TQ\eta + N\bar{\eta}^TG\bar{\eta}.
  \end{aligned}
  \right.
\end{equation*}
Note that, $M_2$ is a bounded self-adjoint linear operator. Let $\tilde{\mathbf{u}} = [\tilde{{u}}_1^T, \cdots, \tilde{{u}}_n^T]^T$ be the decentralized control given by {Theorem \ref{them1}}. Consider a perturbation: ${\mathbf{u}} = \tilde{\mathbf{u}} + \delta\mathbf{u}$. Then
\begin{equation}\label{50}
 \begin{aligned}
2\mathcal{J}^{(N)}_{soc}(\tilde{\mathbf{u}} + \delta\mathbf{u}) &= \langle M_2(\tilde{\mathbf{u}} + \delta\mathbf{u}),\tilde{\mathbf{u}} + \delta\mathbf{u}\rangle + 2\langle M_1,\tilde{\mathbf{u}} + \delta\mathbf{u}\rangle + M_0\\
 & = 2\mathcal{J}^{(N)}_{soc}(\tilde{\mathbf{u}} ) + 2\langle M_2\tilde{\mathbf{u}} + M_1 , \delta\mathbf{u}\rangle + o(\delta \mathbf{u}).\\
 \end{aligned}
\end{equation}
Here, $\langle M_2\tilde{\mathbf{u}} + M_1 , \cdot\rangle$ is the Fr\'{e}chet differential of ${J}^{(N)}_{soc}$ on $\tilde{\mathbf{u}}$.
By the linearity, we also have
\begin{equation}
 \begin{aligned}
 \mathcal{J}^{(N)}_{soc}(\tilde{\mathbf{u}} + \delta\mathbf{u}) = \mathcal{J}^{(N)}_{soc}(\tilde{\mathbf{u}} ) + \sum_{i=1}^{N}\langle M_2\tilde{\mathbf{u}} + M_1 , \delta\mathbf{u}_i\rangle + o(\delta \mathbf{u}), \\
 \end{aligned}
\end{equation}
where $\delta \mathbf{u}_i \triangleq (0^T,\cdots,0^T,\delta u_i^T, 0^T,\cdots,0^T)^T$. Specifically,
\begin{equation}\label{eq 60}
 \begin{aligned}
 \mathcal{J}^{(N)}_{soc}(\tilde{\mathbf{u}} + \delta\mathbf{u}_i) = \mathcal{J}^{(N)}_{soc}(\tilde{\mathbf{u}} ) + \langle M_2\tilde{\mathbf{u}} + M_1 , \delta\mathbf{u}_i\rangle + o(\delta \mathbf{u}_i). \\
 \end{aligned}
\end{equation}
For the estimation of $M_2\tilde{\mathbf{u}} + M_1$, we have the following Lemma.
\begin{lemma}\label{lemma 8.9}
  Under {\rm\textbf{(A\ref{A1})}-\textbf{(A\ref{a5})}}, for the mean-field decentralized control $(\tilde{u}_1$, $\cdots$, $\tilde{u}_N)$ given by {Theorem \ref{them1}}, we have
\begin{equation*}
  \|M_2\tilde{\mathbf{u}} + M_1\| = O\Big(\frac{1}{\sqrt{N}}\Big).
\end{equation*}
\end{lemma}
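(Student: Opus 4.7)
The plan is to combine the variational calculation of Section 4 with the person-by-person optimality of $\tilde{u}_i$ for the auxiliary Problem \ref{P3}. Equation \eqref{eq 60} identifies $\langle M_2\tilde{\mathbf{u}} + M_1,\delta\mathbf{u}_i\rangle$ as the single-agent Fr\'{e}chet derivative of $\mathcal{J}_{soc}^{(N)}$ at $\tilde{\mathbf{u}}$, while equations \eqref{Eq21}--\eqref{Eq24} rewrite the same first-order variation as $\delta J_i + \sum_{k=1}^{6}\varepsilon_k$. Since $\tilde{u}_i = \Theta_1\bar{\alpha}_i + \Theta_2$ is, by Lemma \ref{Lem5.1} and the CC system \eqref{Eq31}, the unique optimizer of Problem \ref{P3}, the first-order condition $\delta J_i = 0$ holds for every admissible $\delta u_i$. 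Consequently,
\begin{equation*}
\langle M_2\tilde{\mathbf{u}} + M_1,\delta\mathbf{u}_i\rangle = \sum_{k=1}^{6}\varepsilon_k(\delta u_i) + o(\|\delta u_i\|),
\end{equation*}
and the task reduces to estimating each $\varepsilon_k$ by $CN^{-1/2}\|\delta u_i\|_{L^2}$.

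The error terms $\varepsilon_1,\varepsilon_2,\varepsilon_4$ in \eqref{Eq15} and \eqref{Eq17} each carry a factor $\tilde{x}^{(N)} - \hat{x}$ of order $O(N^{-1/2})$ by Lemma \ref{lemma 6}, paired with $N\delta x^{(N)}$ or $\sum_j\delta x_j$ which Gronwall's inequality bounds by $C\|\delta u_i\|_{L^2}$ via the perturbed dynamics \eqref{Eq12}--\eqref{Eq12.5}. For $\varepsilon_3$, I would show that the auxiliary limits $x_j^{*}, x^{**}$ of \eqref{Eq15.5} approximate $N\delta x_j$ and $\delta x^{(-i)}$ with $L^2$-error $O(N^{-1/2})$ uniformly in $j$, once again through a Gronwall argument built on Lemma \ref{lemma 6}. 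The term $\varepsilon_6$ follows from Lemma \ref{lemma 7} together with the identity $\tilde{u}_i - \bar{v}_i = \Theta_1(\tilde{x}_i - \bar{\alpha}_i)$, so that boundedness of $\Theta_1$ propagates the $O(N^{-1/2})$ rate.

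The main obstacle is $\varepsilon_5$ in \eqref{Eq22}, which is the empirical-average error for the adjoint coefficients $y_1^j$ and, more critically, the BSDE diffusion $\beta_1^j$. Since the dynamics of $\beta_1^j$ are not explicit, I would exploit the decoupling field associated with the Riccati equation \eqref{RE2} constructed in Section 5.3 to represent $(y_1^j,\beta_1^j)$ as an affine function of $(\tilde{x}_j,\mathbb{E}\tilde{x}_j)$. Then $y_1^j - \mathbb{E}y_1^j$ and $\beta_1^j - \mathbb{E}\beta_1^j$ become affine combinations of mean-zero exchangeable random variables across agents, and the standard variance bound on the empirical mean of $N-1$ such variables gives $O(N^{-1/2})$. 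Combining all six estimates yields $|\langle M_2\tilde{\mathbf{u}}+M_1,\delta\mathbf{u}_i\rangle|\leq CN^{-1/2}\|\delta u_i\|_{L^2}$ uniformly in $i$ by the exchangeability of agents under $\tilde{u}$, which delivers the stated estimate $\|M_2\tilde{\mathbf{u}} + M_1\| = O(N^{-1/2})$ on each block of the Fr\'{e}chet derivative.
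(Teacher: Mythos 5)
Your overall architecture coincides with the paper's Appendix~B proof: identify $\langle M_2\tilde{\mathbf{u}}+M_1,\delta\mathbf{u}_i\rangle$ with the single-agent first variation, invoke $\delta J_i=0$ from the optimality of $\bar v_i$ for Problem~\ref{P3} to reduce everything to $\sum_{k=1}^{6}\varepsilon_k$, and then estimate $\varepsilon_1$--$\varepsilon_4$ and $\varepsilon_6$ exactly as you describe, using Lemmas \ref{lemma 6}, \ref{lemma 7}, \ref{lemma 9} and \ref{lemma 11} together with Gronwall and Cauchy--Schwarz. Those parts of your plan are correct and match the paper.

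The gap is in $\varepsilon_5$, which is also where the paper spends most of its effort. You propose to represent $(y_1^j,\beta_1^j)$ through ``the decoupling field associated with the Riccati equation \eqref{RE2} constructed in Section~5.3,'' but \eqref{RE2} decouples the \emph{limiting} CC system \eqref{cc3} for a single representative agent; it says nothing about the $N$-agent adjoint BSDE system \eqref{Eq18}, whose solutions $y_1^j$ are driven by all $N$ Brownian motions and coupled through the realized states. The paper instead introduces a fresh $Nn\times Nn$ affine decoupling $\mathbf{y_1}=\mathbf{\Lambda}\tilde{\mathbf{x}}+\lambda$ with its own matrix ODE \eqref{Lambda}, exploits the symmetry $\Lambda_{ii}=\Lambda_1$, $\Lambda_{ij}=\Lambda_2$ ($i\neq j$), and then --- this is the step your sketch omits entirely --- proves that $\Lambda_1+(N-1)\Lambda_2$ is bounded \emph{uniformly in $N$} via a comparison with the majorant system \eqref{Lambda1 Lambda2*}. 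Without that uniform bound, your ``standard variance bound on the empirical mean of exchangeable variables'' does not close: the off-diagonal blocks $\Lambda_2$ are individually $O(1/N)$ but there are $N-1$ of them, so the coefficient multiplying the empirical fluctuation $\tilde{x}^{(N)}-\hat{x}$ could a priori grow with $N$. Also note that $y_1^j$ is an affine function of the whole vector $(\tilde{x}_1,\dots,\tilde{x}_N)$, not of $(\tilde{x}_j,\mathbb{E}\tilde{x}_j)$ alone, so the ``mean-zero exchangeable summands'' you want to average are not independent across $j$; the correct route is to reduce $\mathbb{E}y_1^j-\frac{1}{N}\sum_j y_1^j$ to $(\Lambda_1+(N-1)\Lambda_2)(\tilde{x}^{(N)}-\mathbb{E}\tilde{x})$ and then apply Lemma~\ref{lemma 6}. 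With that repair your argument becomes the paper's.
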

\begin{proof}
See Appendix B.
\end{proof}

Based on the discussion above, we can introduce the result of the asymptotic optimality.

\begin{theorem}\label{them2}
 Under {\rm\textbf{(A\ref{A1})}-\textbf{(A\ref{a5})}}, the mean-field decentralized control $\tilde{u}$ given by {Theorem \ref{them1}} has asymptotic social optimality such that
 \begin{equation*}
 \frac{1}{N}\Big|\inf_{{u}\in\mathcal{U}_c}\mathcal{J}_{soc}^{(N)}({u}) - \mathcal{J}_{soc}^{(N)}(\tilde{u})\Big| = O\Big(\frac{1}{\sqrt{N}}\Big).
\end{equation*}
\end{theorem}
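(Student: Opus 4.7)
My plan is to combine the quadratic--affine representation of $\mathcal{J}^{(N)}_{soc}$ developed in Section 6.2 with the uniform convexity supplied by (A\ref{A3}) and the prior estimates of Section 6.1. Let $\mathbf{u}^{*}\in\mathcal{U}_{c}$ be a centralized minimizer and set $\delta\mathbf{u}^{*}\triangleq\mathbf{u}^{*}-\tilde{\mathbf{u}}$. The exact Taylor expansion underlying \eqref{50} (which is in fact an identity because $\mathcal{J}^{(N)}_{soc}$ is quadratic--affine) gives
\begin{equation*}
\mathcal{J}^{(N)}_{soc}(\mathbf{u}^{*})-\mathcal{J}^{(N)}_{soc}(\tilde{\mathbf{u}})=\langle M_{2}\tilde{\mathbf{u}}+M_{1},\,\delta\mathbf{u}^{*}\rangle+\tfrac{1}{2}\langle M_{2}\delta\mathbf{u}^{*},\,\delta\mathbf{u}^{*}\rangle.
\end{equation*}
Uniform convexity forces $\langle M_{2}\delta\mathbf{u}^{*},\delta\mathbf{u}^{*}\rangle\geq 0$, while optimality of $\mathbf{u}^{*}$ forces the left-hand side to be non-positive. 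Rearranging and invoking Cauchy--Schwarz yields the one-line bound
\begin{equation*}
0\leq \mathcal{J}^{(N)}_{soc}(\tilde{\mathbf{u}})-\mathcal{J}^{(N)}_{soc}(\mathbf{u}^{*})\leq -\langle M_{2}\tilde{\mathbf{u}}+M_{1},\,\delta\mathbf{u}^{*}\rangle \leq \|M_{2}\tilde{\mathbf{u}}+M_{1}\|\cdot\|\delta\mathbf{u}^{*}\|.
\end{equation*}

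I would then estimate the two factors separately. For the displacement norm, Lemma \ref{lemma 4} gives $\|\tilde{\mathbf{u}}\|^{2}=\sum_{i=1}^{N}\mathbb{E}\int_{0}^{T}\|\tilde{u}_{i}\|^{2}dt\leq NL$. Since $\mathbf{u}^{*}$ is a minimizer over $\mathcal{U}_{c}\supset\mathcal{U}_{1}\times\cdots\times\mathcal{U}_{N}$, we have $\mathcal{J}_{soc}^{(N)}(\mathbf{u}^{*})\leq\mathcal{J}_{soc}^{(N)}(\tilde{\mathbf{u}})\leq NL$ by Lemma \ref{lemma 8}, which places $\mathbf{u}^{*}$ inside the regime of Proposition \ref{coro 1} and therefore gives $\|\mathbf{u}^{*}\|^{2}\leq NL$. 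The triangle inequality then yields $\|\delta\mathbf{u}^{*}\|=O(\sqrt{N})$. For the residual-gradient norm, Lemma \ref{lemma 8.9} is precisely the statement that $\|M_{2}\tilde{\mathbf{u}}+M_{1}\|=O(1/\sqrt{N})$. Multiplying the two bounds gives
\begin{equation*}
\Big|\mathcal{J}^{(N)}_{soc}(\tilde{\mathbf{u}})-\inf_{\mathbf{u}\in\mathcal{U}_{c}}\mathcal{J}^{(N)}_{soc}(\mathbf{u})\Big|\leq O(1/\sqrt{N})\cdot O(\sqrt{N})=O(1),
\end{equation*}
and dividing by $N$ delivers the claimed $O(1/\sqrt{N})$ rate (indeed even the stronger $O(1/N)$).

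The genuine obstacle is already absorbed into Lemma \ref{lemma 8.9}, and the Theorem itself is essentially its corollary via the inner-product manipulation above. What makes that lemma delicate is that $M_{2}\tilde{\mathbf{u}}+M_{1}$ must first be unpacked through the adjoint operators $L^{*}$ and $\tilde{L}^{*}$ into an explicit representation involving the realized state $\tilde{x}_{i}$, the state-average $\tilde{x}^{(N)}$, and the adjoint processes $y_{1}^{j},y_{2},\beta_{1}^{j}$ of \eqref{Eq18}; the person-by-person variational analysis of Section 4 then identifies this representation, modulo the residuals $\varepsilon_{1},\dots,\varepsilon_{6}$ appearing in \eqref{Eq15}, \eqref{Eq17}, \eqref{Eq22} and \eqref{Eq24}, with the gradient of the auxiliary cost at the auxiliary optimum $\bar{v}_{i}$, which vanishes by Lemma \ref{Lem5.1}. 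The required $O(1/\sqrt{N})$ decay of the residuals is supplied by the $O(1/N)$ and $O(1/N^{2})$ mean-square estimates of Lemmas \ref{lemma 6}, \ref{lemma 7} and \ref{lemma 11}; everything else reduces to Cauchy--Schwarz and the boundedness afforded by Lemma \ref{lemma 4}.
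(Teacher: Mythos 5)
Your overall route is the same as the paper's: write $\mathcal{J}^{(N)}_{soc}$ as the quadratic--affine functional $\langle M_2\mathbf{u},\mathbf{u}\rangle+2\langle M_1,\mathbf{u}\rangle+M_0$, expand exactly around $\tilde{\mathbf{u}}$, discard the nonnegative quadratic term by convexity, bound the linear term by Cauchy--Schwarz, and feed in Lemma \ref{lemma 8.9} together with Lemma \ref{lemma 8} and Proposition \ref{coro 1}. Up to the last multiplication your argument reproduces \eqref{eq 61}.

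The one step that does not hold up is the product $\|M_{2}\tilde{\mathbf{u}}+M_{1}\|\cdot\|\delta\mathbf{u}^{*}\|=O(1/\sqrt{N})\cdot O(\sqrt{N})=O(1)$, and hence your parenthetical claim of the ``stronger $O(1/N)$'' rate. Lemma \ref{lemma 8.9} is established by testing $M_{2}\tilde{\mathbf{u}}+M_{1}$ only against single-agent perturbations $\delta\mathbf{u}_{i}$ with $\mathbb{E}\int_{0}^{T}\|\delta u_{i}\|^{2}dt\le L$; what that duality argument actually controls is each agent's block of the gradient at $O(1/\sqrt{N})$, uniformly in $i$. The norm of the full $Nm$-dimensional process is then only $\bigl(\sum_{i=1}^{N}O(1/N)\bigr)^{1/2}=O(1)$, not $O(1/\sqrt{N})$, so pairing it directly with the full displacement $\delta\mathbf{u}^{*}$ gives $O(1)\cdot O(\sqrt{N})=O(\sqrt{N})$. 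This is precisely why the paper decomposes $\delta\mathbf{u}=\sum_{i}\delta\mathbf{u}_{i}$ and applies the discrete Cauchy--Schwarz bound $\sum_{i}\langle\cdot,\delta\mathbf{u}_{i}\rangle\le\sqrt{\sum_{i}\|\cdot\|^{2}}\sqrt{\sum_{i}\|\delta\mathbf{u}_{i}\|^{2}}=\sqrt{N\cdot O(1/N)}\cdot O(\sqrt{N})$ in \eqref{eq 61}, again obtaining $O(\sqrt{N})$. Either way the optimality loss is $O(\sqrt{N})$ and the stated $O(1/\sqrt{N})$ rate follows after dividing by $N$, so the theorem itself survives; but the $O(1/N)$ rate you announce is not justified by these lemmas. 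Your account of what goes into Lemma \ref{lemma 8.9} --- unpacking the gradient through the person-by-person variation, identifying it with $\delta J_{i}$ plus the residuals $\varepsilon_{1},\dots,\varepsilon_{6}$, and killing $\delta J_{i}$ by optimality of the auxiliary problem --- is accurate.
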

\begin{proof}
 By the representation of \eqref{eq 49}, $  \mathcal{J}^{(N)}_{soc}(\tilde{\mathbf{u}}) - \mathcal{J}^{(N)}_{soc}(\tilde{\mathbf{u}} + \delta\mathbf{u}) = o(N)$ is aimed to prove, for any admissible control $\tilde{\mathbf{u}} + \delta\mathbf{u}$ satisfying condition \eqref{a3}. By \eqref{50}, the following relation can be obtained
 \begin{equation}\label{eq 61}
 \begin{aligned}
  0\leq&\mathcal{J}^{(N)}_{soc}(\tilde{\mathbf{u}} ) - \mathcal{J}^{(N)}_{soc}(\tilde{\mathbf{u}} + \delta\mathbf{u}) = - \Bigg(\sum_{i=1}^{N}\langle M_2\tilde{\mathbf{u}} + M_1 , \delta\mathbf{u}_i\rangle\Bigg) - \frac{1}{2}\ \langle M_2 \delta\mathbf{u}, \delta\mathbf{u}\rangle \\
 \leq & \sqrt{\sum_{i=1}^{N}\| M_2\tilde{\mathbf{u}} + M_1\|^2 \sum_{i=1}^{N}\|\delta\mathbf{u}_i\|^2} - \frac{1}{2}\ \langle M_2 \delta\mathbf{u}, \delta\mathbf{u}\rangle \\
 = & \sqrt{N\| M_2\tilde{\mathbf{u}} + M_1\|^2} \times O(\sqrt{N}) - \frac{1}{2} \langle M_2 \delta\mathbf{u}, \delta\mathbf{u}\rangle.
 \end{aligned}
 \end{equation}
 Thus, by Lemma \ref{lemma 8.9} and noting the convexity (i.e., $\frac{1}{2} \langle M_2 \delta\mathbf{u}, \delta\mathbf{u}\rangle\geq 0$),  we have
 \begin{equation*}
 \begin{aligned}
  \mathcal{J}^{(N)}_{soc}(\tilde{\mathbf{u}} ) - \mathcal{J}^{(N)}_{soc}(\tilde{\mathbf{u}} + \delta\mathbf{u}) = &   O\Big({\sqrt{N}}\Big).
 \end{aligned}
 \end{equation*}
 Theorem \ref{them2} follows.
\end{proof}

\section{A numerical example}
Next, we will show an example by letting $n=m=2$,
$A=\left(\begin{smallmatrix}
	0.9723 & 0.9707 \\
	0.7409 & 0.0118
\end{smallmatrix}\right)$,
$B=\left(\begin{smallmatrix}
	0.7310 & 0.7980 \\
	0.2814 & 0.6108
\end{smallmatrix}\right)$,
$F=\left(\begin{smallmatrix}
	0.2077 & 0.4383 \\
	0.5265 & 0.2515
\end{smallmatrix}\right)$,
$C=\left(\begin{smallmatrix}
	0.5469 & 0.9669 \\
	0.3363 & 0.8207
\end{smallmatrix}\right)$,
$D=\left(\begin{smallmatrix}
	0.9051 & 0.8551 \\
	0.8856 & 0.4914
\end{smallmatrix}\right)$,
$\tilde{F}=\left(\begin{smallmatrix}
	0.4969 & 0.5103 \\
	0.4094 & 0.2017
\end{smallmatrix}\right)$,
$\xi_0=\left(\begin{smallmatrix}
	0.1627\\
	0.6570
\end{smallmatrix}\right)$,
$\Gamma=\left(\begin{smallmatrix}
	0.7420 & 0.9669 \\
	0.2016 & 0.1553
\end{smallmatrix}\right)$,
$\eta=\left(\begin{smallmatrix}
	0.8740\\
	0.7733
\end{smallmatrix}\right)$,
$Q=\left(\begin{smallmatrix}
	0.1845 & 0 \\
	0 & 0.1785
\end{smallmatrix}\right)$ and
$R=\left(\begin{smallmatrix}
	0.6587 & 0 \\
	0 & 0.8763
\end{smallmatrix}\right)$. Time interval $T = 1$ and the population $N=1000$.

 Firstly, by applying Runge-Kutta methods to \eqref{RE2} and \eqref{kappa}, we can obtain $K$ and $\kappa$ respectively. Then using the relation $ \tilde{Y} = K \tilde{X} + \kappa $ and $\tilde{Z} = K\tilde{A}_1' \tilde{X} + K\tilde{B}_1'K \tilde{X} + K\tilde{B}_1'\kappa$, \eqref{cc3} can be solved and we can determine $\tilde{X}$, $\tilde{Y}$, $\tilde{Z}$. Correspondingly, $\hat{x}$, $\hat{y_1}$, $\hat{\beta_1}$ can also be obtained. By calculating \eqref{Eq25}, \eqref{Eq26} and \eqref{Eq27}, we have $P$, $\varphi$, $\Theta_1$, $\Theta_2$, and then the realized state $\tilde{x}_i$ would follows by \eqref{realized state}.

 Through the calculation above, we can obtain the two coordinates of the trajectories of realized state-average $\tilde{x}^{(N)}$ and mean-field term $\hat{x}$ as follows.

\begin{figure}[htp]
\begin{center}
\includegraphics[height=9cm,width=13cm]{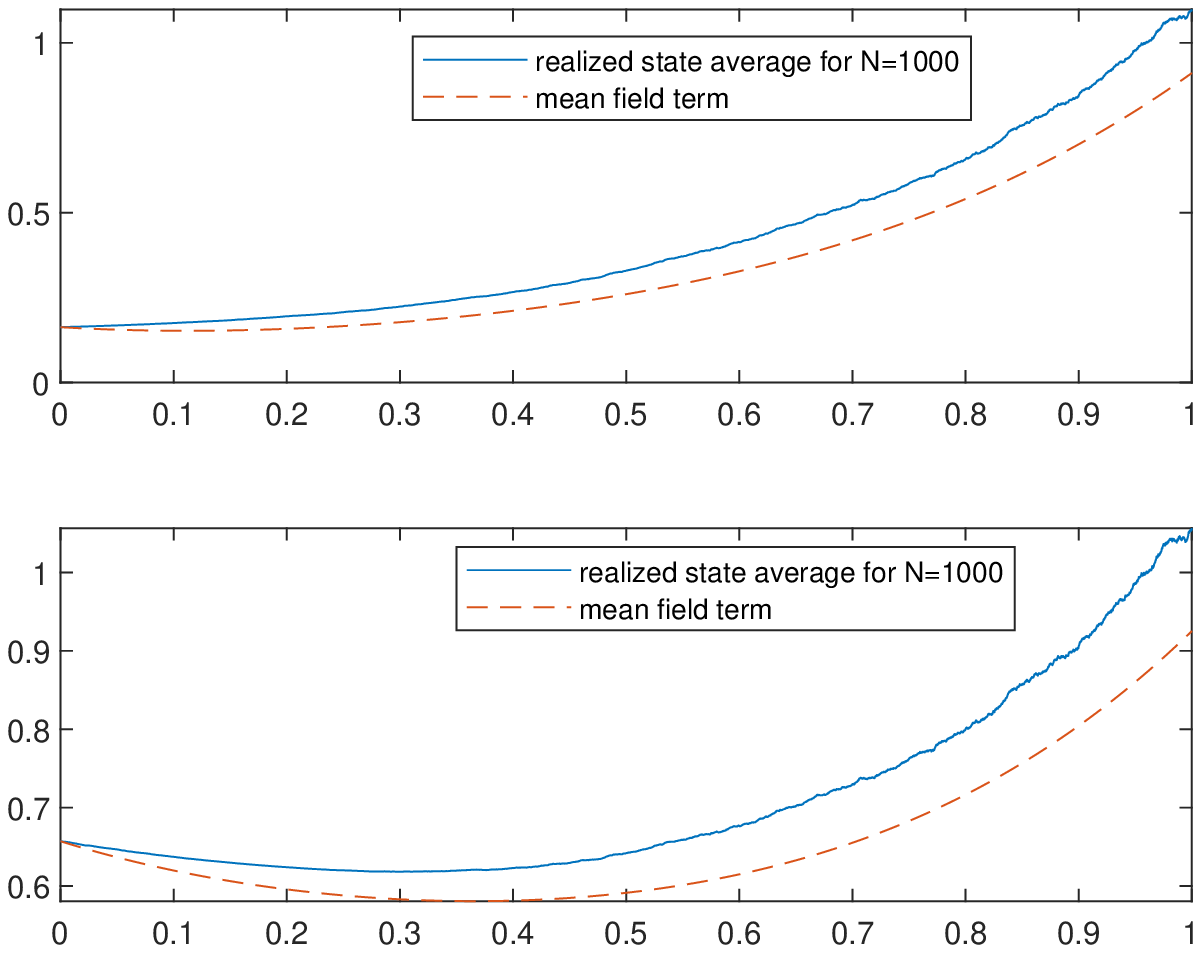}\\
  \end{center}
\end{figure}


\section{Conclusion}
In this paper, we investigate the social optimum of a generalized form of mean-field LQG control problem. First, we discuss the convexity of the social cost functional and summarize some conditions for some cases of indefinite weight coefficients. Then based on person by person optimality and duality procedures, a set of decentralized controls is designed by solving an auxiliary control problem which subject to a MF-FBSDEs system (CC system) in mean-field approximations. We study the well-posedness of the MF-FBSDEs system and obtain the conditions for its solvability. Finally, the corresponding decentralized controls is proved to has asymptotic social optimality.

\appendix
\section{Proofs of some preliminary estimations}
\begin{proof}[Proof of Lemma \ref{lemma 6}]
 By \eqref{Eq30}, \eqref{Eq31} and \eqref{realized state}, the dynamic of $\tilde{x}^{(N)}-\hat{x}$ follows
 \begin{equation*}
 \left\{
 \begin{aligned}
 & d(\tilde{x}^{(N)}-\hat{x}) = 
 (\Pi_1 + F)(\tilde{x}^{(N)}-\hat{x})dt + \frac{1}{N}\sum_{i=1}^{N}[(C + D\Theta_1 + \tilde{F})\tilde{x}_i + D\Theta_2]dW_i, \\
 & (\tilde{x}^{(N)}-\hat{x})(0) = 0.
 \end{aligned}
 \right.
 \end{equation*}
 By applying Cauchy-Schwartz inequality, Burkholder-Davis-Gundy's inequality and Lemma \ref{lemma 4}, there exist some constants $L$ such that
 \begin{equation*}
 \resizebox{\textwidth}{!}{$
 \begin{aligned}
 & \mathbb{E}\sup_{0\leq s \leq t}\|\tilde{x}^{(N)}(s) - \hat{x}(s)\|^2 \\
 \leq & 2\mathbb{E}\sup_{0\leq s\leq t}\left\|\int_{0}^{s}(\Pi_1 + F)(\tilde{x}^{(N)}-\hat{x})dr\right\|^2 + \frac{2}{N^2} \mathbb{E} \sup_{0\leq s\leq t} \left\|\sum_{i=1}^{N} \int_{0}^{s} [(C + D\Theta_1 + \tilde{F})\tilde{x}_i + D\Theta_2]dW_i\right\|^2 \\
 \leq & 2\mathbb{E}\int_{0}^{t}\left\|(\Pi_1 + F)(\tilde{x}^{(N)}-\hat{x})\right\|^2dr + \frac{L}{N^2}\mathbb{E}\sum_{i=1}^{N}\int_{0}^{t}\left\|(C + D\Theta_1 + \tilde{F})\tilde{x}_i + D\Theta_2\right\|^2dr \\
 = & L \mathbb{E}\int_{0}^{t}\left\|\tilde{x}^{(N)} - \hat{x}\right\|^2dr + O\Big(\frac{1}{N}\Big).
 \end{aligned}$}
 \end{equation*}
 Then, by Gronwall's inequality,
 \[\mathbb{E}\sup_{0\leq t \leq T}\|\tilde{x}^{(N)}(t) - \hat{x}(t)\|^2 = O\bigg(\frac{1}{N}\bigg).\]
\end{proof}

\begin{proof}[Proof of Lemma \ref{lemma 7}]
 By \eqref{realized state} and \eqref{Eq31}, we have
 \begin{equation*}\resizebox{\linewidth}{!}{$
 \left\{
 \begin{aligned}
 d(\tilde{x}_i - \hat{x}) 
 = & \Big\{\big[A - B(R + D^TPD)^{ - 1}( B^TP + D^TPC)\big](\tilde{x}_i-\hat{x}) + F(\tilde{x}^{(N)}-\hat{x}) \Big\}dt \\
 & + \big[(C + D\Theta_1)\tilde{x}_i + D\Theta_2 + \tilde{F}\tilde{x}^{(N)}\big]dW_i, \\
 \tilde{x}_i(0) - \hat{x}(0) & = 0.
 \end{aligned}
 \right.$}
 \end{equation*}
 By Cauchy-Schwarz inequality, Burkholder-Davis-Gundy's inequality and {Lemma \ref{lemma 6}}, there exist some constants $L$ such that
 \begin{equation*}\resizebox{\linewidth}{!}{$
 \begin{aligned}
 & \mathbb{E}\sup_{0\leq s\leq t}\|\tilde{x}_i(s) - \hat{x}(s)\|^2 \\
 \leq & 4\mathbb{E}\sup_{0\leq s\leq t}\int_{0}^{s}\Big\|\big[A - B(R + D^TPD)^{ - 1}( B^TP + D^TPC)\big](\tilde{x}_i - \hat{x})\Big\|^2 + \|F(\tilde{x}^{(N)}-\hat{x})\|^2dr \\
 & + 2\mathbb{E}\sup_{0\leq s\leq t}\Bigg\|\int_{0}^{s}\big[(C + D\Theta_1)\tilde{x}_i + D\Theta_2 + \tilde{F}\tilde{x}^{(N)}\big]dW_i\Bigg\|^2 \\
 \leq & L\Bigg\{\mathbb{E}\int_{0}^{t}\|\tilde{x}_i - \hat{x}\|^2dr + O\bigg(\frac{1}{N}\bigg) + \mathbb{E}\int_{0}^{t}\Big\|(C + D\Theta_1)\tilde{x}_i + D\Theta_2 + \tilde{F}\tilde{x}^{(N)}\Big\|^2dr\Bigg\} \\
 \leq & L\Bigg\{\mathbb{E}\int_{0}^{t}\|\tilde{x}_i - \hat{x}\|^2dr + O\bigg(\frac{1}{N}\bigg) + L\Bigg\}. \\
 \end{aligned}$}
 \end{equation*}
 Then, by Gronwall's inequality, one can obtain
 \[\mathbb{E}\sup_{0\leq t \leq T}\|\tilde{x}_i(t) - \hat{x}(t)\|^2\leq L,\]
 where $L$ is not related to $i$ and the lemma follows.
 Similarly, by \eqref{Eq28} and \eqref{realized state},  The dynamic of $\tilde{x}_i - \bar{\alpha}_i$ follows
  \begin{equation}
 \left\{
 \begin{aligned}
 & d(\tilde{x}_i - \bar{\alpha}_i) = [(A+ B\Theta_1)(\tilde{x}_i - \bar{\alpha}_i) + F(\tilde x^{(N)}-\hat{x})]dt\\
 &\hspace{2cm}+ [(C+ D\Theta_1)(\tilde{x}_i - \bar{\alpha}_i) + \tilde{F}(\tilde x^{(N)}-\hat{x})]dW_i, \\
 & (\tilde{x}_i - \bar{\alpha}_i)(0) = 0. \\
 \end{aligned}
 \right.
\end{equation}
Applying Burkholder-Davis-Gundy's inequality, Gronwall's inequality, and Lemma \ref{lemma 6}, the result can be obtained.
\end{proof}

\begin{proof}[Proof of Lemma \ref{lemma 8}]
 By \eqref{Eq2} and Lemma \ref{lemma 4}, \ref{lemma 6}, \ref{lemma 7}, for some constants $L$ we have
 \begin{equation*}
 \begin{aligned}
2\mathcal{J}^{(N)}_{soc}(\tilde{u}_1,\cdots,&\tilde{u}_N)
 =  \sum_{i = 1}^{N}\mathbb{E}\bigg\{\int_{0}^{T}\|\tilde{x}_i - \Gamma \hat{x} + \Gamma \hat{x} - \Gamma\tilde{x}^{(N)} -\eta\|_Q^2 + \|\tilde{u}_i\|^2_{R} \ dt \\
 &\hspace{1.5cm}+\|\tilde{x}_i(T)-\bar{\Gamma} \hat{x}(T)+\bar{\Gamma} \hat{x}(T) - \bar{\Gamma} \tilde{x}^{(N)}(T) - \bar{\eta}\|_{G}^2\bigg\}\\
 \leq& L\sum_{i = 1}^{N}\mathbb{E}\bigg\{\int_{0}^{T}\|\tilde{x}_i - \hat{x}\|^2 + \| \hat{x} - \tilde{x}^{(N)}\|^2 +\|\eta\|^2 + \|\tilde{u}_i\|^2 dt \\
 &\hspace{1.5cm}+\|\tilde{x}_i(T) - \hat{x}(T)\|^2 + \| \hat{x}(T) - \tilde{x}^{(N)}(T)\|^2 +\|\bar{\eta}\|^2\bigg\}\\
 \leq & L\sum_{i = 1}^{N}\mathbb{E}\int_{0}^{T}L + O\Big(\frac{1}{N}\Big) dt
 \leq  NL.
 \end{aligned}
 \end{equation*}
\end{proof}

To prove Lemma \ref{lemma 11}, we need the following lemmas.
\begin{lemma}\label{lemma 9}
Under {\rm\textbf{(A\ref{A1})}-\textbf{(A\ref{a5})}}, for some constant $L$ and any admissible control with form $(\tilde{u}_1$, $\cdots$, $\tilde{u}_{i-1}$, $\acute{u}_i$, $\tilde{u}_{i+1}$, $\cdots$, $\tilde{u}_N)\in\mathcal{U}_c$ satisfying $\mathbb{E}\int_{0}^{T}\|\acute{u}_i\|^2dt<L$, it follows that
\begin{equation*}
 \begin{aligned}
 \mathbb{E}\sup_{0\leq t\leq T}\|\delta x^{(N)}\|^2 = O\Big(\frac{1}{N^2}\Big).
 \end{aligned}
 \end{equation*}
\end{lemma}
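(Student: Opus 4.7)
The plan is to derive a closed-form SDE for $\delta x^{(N)}$ by summing the individual perturbation equations, exploit the fact that agents $j\neq i$ still use the feedback law $\tilde{u}_j=\Theta_1\tilde{x}_j+\Theta_2$, establish auxiliary $L^2$ bounds on the individual perturbations $\delta x_j$ in terms of $\delta x^{(N)}$, and close the loop via Gronwall's inequality.

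First, since the admissible control is $(\tilde{u}_1,\dots,\tilde{u}_{i-1},\acute{u}_i,\tilde{u}_{i+1},\dots,\tilde{u}_N)$ and agents $j\neq i$ apply the control \emph{law} $\Theta_1 x_j+\Theta_2$ (so the realised control adapts to the perturbed state), subtracting \eqref{realized state} from the corresponding $\acute{x}_j$-dynamics gives, for $j\neq i$,
\[
d\delta x_j=\bigl[(A+B\Theta_1)\delta x_j+F\,\delta x^{(N)}\bigr]dt+\bigl[(C+D\Theta_1)\delta x_j+\tilde F\,\delta x^{(N)}\bigr]dW_j,\quad\delta x_j(0)=0,
\]
while for agent $i$,
\[
d\delta x_i=\bigl[A\delta x_i+B\delta u_i+F\,\delta x^{(N)}\bigr]dt+\bigl[C\delta x_i+D\delta u_i+\tilde F\,\delta x^{(N)}\bigr]dW_i.
\]
Summing over $j$ and dividing by $N$, the drift collapses to $(A+B\Theta_1+F)\delta x^{(N)}-\frac{1}{N}B\Theta_1\delta x_i+\frac{1}{N}B\delta u_i$, so that the non-averaged individual term $\delta x_i$ and the perturbation $\delta u_i$ only enter the dynamics of $\delta x^{(N)}$ with a $1/N$ prefactor.

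Next, I would establish two preliminary estimates by the standard Cauchy--Schwarz, Burkholder--Davis--Gundy (BDG), and Gronwall machinery. For the individual agent $i$, using $\mathbb E\int_0^T\|\delta u_i\|^2 dt\le L$ (which follows from $\mathbb E\int\|\acute u_i\|^2 dt<L$ and Lemma \ref{lemma 4}), one obtains $\mathbb E\sup_{0\le t\le T}\|\delta x_i(t)\|^2\le L(1+\int_0^T\!\mathbb E\|\delta x^{(N)}\|^2 ds)$. For $j\neq i$, one gets $\mathbb E\sup_{0\le t\le T}\|\delta x_j(t)\|^2\le L\int_0^T\!\mathbb E\|\delta x^{(N)}\|^2 ds$.

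The heart of the argument is applying BDG to the SDE of $\delta x^{(N)}$. Setting $M(t)\triangleq\mathbb E\sup_{0\le s\le t}\|\delta x^{(N)}(s)\|^2$, the drift contributes a term of order $L\int_0^t M(s)\,ds+\tfrac{L}{N^2}\int_0^t(\mathbb E\|\delta x_i\|^2+\mathbb E\|\delta u_i\|^2)\,ds$; the diffusion term, since the $N$ Brownian motions $W_1,\dots,W_N$ are mutually independent, collapses via BDG to
\[
\frac{L}{N^2}\sum_{j=1}^{N}\mathbb E\int_0^t\|\sigma_j\|^2 ds\le \frac{L(N-1)}{N^2}\int_0^t\!L\!\int_0^s M(u)\,du\,ds+\frac{L}{N^2}\int_0^t(\mathbb E\|\delta x_i\|^2+\mathbb E\|\delta u_i\|^2+M(s))\,ds,
\]
where the $(N-1)/N^2=O(1/N)$ saving is precisely what prevents the bound from degrading. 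Combining and using the preliminary estimates yields
\[
M(t)\le L\!\int_0^t M(s)\,ds+O(1/N^2),
\]
and Gronwall's inequality gives $M(T)=O(1/N^2)$, which is the claim.

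The main obstacle is a mild bootstrap: the preliminary bound on $\nu_j=\mathbb E\sup\|\delta x_j\|^2$ (for $j\neq i$) is in terms of $M$, and this feeds back into the estimate of $M$ itself through the sum $\tfrac{1}{N^2}\sum_{j\neq i}\nu_j$. The key observation is that this feedback coefficient $(N-1)/N^2=O(1/N)$ is small, so it only contributes an $O(1/N)$ perturbation to the Gronwall kernel, which does not alter the $O(1/N^2)$ conclusion. This cancellation is exactly the ``weakly coupled'' benefit of the mean-field averaging and depends crucially on the independence of the $W_j$'s used in BDG.
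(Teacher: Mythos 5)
Your proposal follows essentially the same route as the paper: sum the perturbation SDEs to get a closed equation for $\delta x^{(N)}$, use Burkholder--Davis--Gundy together with the mutual independence of $W_1,\dots,W_N$ to extract the crucial $1/N^2$ factor from the averaged diffusion, bound the individual $\delta x_j$ ($j\neq i$) by $L\int_0^t\mathbb{E}\|\delta x^{(N)}\|^2\,ds$ via a first Gronwall pass, and feed this back into a second Gronwall pass with an $O(1/N)$ kernel perturbation. The one discrepancy is your reading of the other agents' controls: you treat them as \emph{closed-loop} feedback laws applied to the perturbed states (so $\delta x_j$ picks up $(A+B\Theta_1)\delta x_j$ and $(C+D\Theta_1)\delta x_j$ terms), whereas the paper holds the processes $\tilde{u}_j\in\mathcal{U}_j$ fixed, i.e.\ $\delta u_j=0$ for $j\neq i$, so that $d\delta x_j=(A\delta x_j+F\delta x^{(N)})dt+(C\delta x_j+\tilde F\delta x^{(N)})dW_j$ and the drift of $\delta x^{(N)}$ is simply $(A+F)\delta x^{(N)}+\frac{B}{N}\delta u_i$; the open-loop reading is the one needed downstream, where the lemma feeds the Fr\'{e}chet-derivative estimate for the perturbation $\delta\mathbf{u}_i=(0,\dots,\delta u_i,\dots,0)$. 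Since $\Theta_1$ is bounded and deterministic, the extra terms in your version are harmless linear perturbations and the $O(1/N^2)$ conclusion is unaffected, so the argument is correct either way.
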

\begin{proof}[Proof of Lemma \ref{lemma 9}]
 The dynamic of $\delta x^{(N)}$ follows
 \begin{equation*}\resizebox{\linewidth}{!}{$
 \left\{
 \begin{aligned}
 & d \delta x^{(N)} = \Big[(A + F)\delta x^{( N)} + \frac{B}{N}\delta u_i \Big]dt + \frac{1}{N}\sum_{j=1}^{N}(C\delta x_{j} + {\tilde{F}}\delta x^{( N)})dW_j + \frac{1}{N}D\delta {u}_idW_i, \\
 & \delta x^{ (N)}(0) = 0. \\
 \end{aligned}
 \right.$}
 \end{equation*}
 By {Lemma \ref{lemma 6}}, we have $\mathbb{E}\sup_{0\leq t \leq T}\|\tilde{x}^{(N)}(t) - \hat{x}(t)\|^2 = O\big(\frac{1}{N}\big)$. By Burkholder-Davis-Gundy's inequality and the boundness of $\acute{u}_i$, there exist some constants $L$ such that
 \begin{equation*}\resizebox{\linewidth}{!}{$
 \begin{aligned}
 \mathbb{E}\sup_{0\leq s\leq t}\|\delta x^{(N)}(s)\|^2 &= \mathbb{E}\sup_{0\leq s\leq t}\Bigg\|\int_{0}^{s}\Big[(A + F)\delta x^{( N)} + \frac{B}{N}\delta u_i \Big]dr +\frac{1}{N}D\int_{0}^{s}\delta {u}_idW_i\\
 &+ \frac{1}{N}\sum_{i=1}^{N}\int_{0}^{s}(C\delta x_{j} + {\tilde{F}}\delta x^{( N)})dW_j \Bigg\|^2 \\ \leq& L\mathbb{E}\int_{0}^{t}\|\delta x^{( N)}\|^2 dr+ \frac{L}{N^2}\sum_{i=1}^{N}\mathbb{E}\int_{0}^{t}\|\delta x_{i}\|^2 + \|\delta x^{( N)}\|^2dr + O\Big(\frac{1}{N^2}\Big).
 \end{aligned}$}
 \end{equation*}
 Applying Gronwall's inequality, for some constant $L$, one can obtain
 \begin{equation*}
 \begin{aligned}
 \mathbb{E}\sup_{0\leq s\leq t}\|\delta x_j(s)\|^2 \leq & L\mathbb{E}\int_{0}^{t}\|\delta x^{(N)}\|^2ds + \int_{0}^{t}L\mathbb{E}\int_{0}^{s}\|\delta x^{(N)}\|^2drds \\
 \leq & L\mathbb{E}\int_{0}^{t}\|\delta x^{(N)}\|^2ds, \quad j\neq i.\\
 \end{aligned}
 \end{equation*}
 Thus
 \begin{equation*}
 \begin{aligned}
 \mathbb{E}\sup_{0\leq s\leq t}\|\delta x^{(N)}(s)\|^2 \leq & L\mathbb{E}\int_{0}^{t}\|\delta x^{( N)}\|^2 ds + O\Big(\frac{1}{N^2}\Big).
 \end{aligned}
 \end{equation*}
 Again, by Gronwall's inequality
 \begin{equation*}
 \begin{aligned}
 \mathbb{E}\sup_{0\leq t\leq T}\|\delta x^{(N)}(t)\|^2 = O\Big(\frac{1}{N^2}\Big).
 \end{aligned}
 \end{equation*}
\end{proof}
\begin{lemma}\label{coro 2}
  Under {\rm\textbf{(A\ref{A1})}-\textbf{(A\ref{a5})}}, for some constant $L$ and any admissible control with form $(\tilde{u}_1$, $\cdots$, $\tilde{u}_{i-1}$, $\acute{u}_i$, $\tilde{u}_{i+1}$, $\cdots$, $\tilde{u}_N)\in\mathcal{U}_c$ satisfying $\mathbb{E}\int_{0}^{T}\|\acute{u}_i\|^2dt<L$, it follows that  \begin{equation}\label{sup delta xi}
 \sup_{1\leq j\leq N}\mathbb{E}\sup_{0\leq t\leq T}\|\delta x_j\|^2 = O\Big(\frac{1}{N^2}\Big).
\end{equation}
\end{lemma}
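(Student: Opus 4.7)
The plan is to derive a per-agent $L^2$-$L^\infty$ estimate from the already-established state-average bound in Lemma \ref{lemma 9}, namely $\mathbb{E}\sup_{0 \leq t \leq T}\|\delta x^{(N)}\|^2 = O(1/N^2)$. The perturbation $\delta u_i$ enters the dynamics of $\delta x_j$ for $j \neq i$ only through the mean-field channel $F\delta x^{(N)}$ and $\tilde F\delta x^{(N)}$, whereas for $j = i$ it additionally enters directly through the coefficients $B$ and $D$. I would therefore split the supremum into the two ranges and handle them with different arguments.

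For $j \neq i$, the SDE $d\delta x_j = (A\delta x_j + F\delta x^{(N)})dt + (C\delta x_j + \tilde{F}\delta x^{(N)})dW_j$ with zero initial condition yields, after Burkholder--Davis--Gundy on the stochastic integral, Cauchy--Schwarz on the drift integral, and Gronwall's inequality to absorb the self-coupling,
\[\mathbb{E}\sup_{0 \leq s \leq t}\|\delta x_j(s)\|^2 \leq L\,\mathbb{E}\int_0^t\|\delta x^{(N)}(s)\|^2 ds = O(1/N^2),\]
with $L$ depending only on the bounded coefficients $A,C,F,\tilde F$ and hence uniform in $j \neq i$. For $j = i$, I would decompose $\delta x_i = \delta\alpha_i + (\delta x_i - \delta\alpha_i)$. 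Lemma \ref{lemma 11} directly controls the second summand by $O(1/N^2)$, so it remains to control $\delta\alpha_i$, the solution of $d\delta\alpha_i = (A\delta\alpha_i + B\delta u_i)dt + (C\delta\alpha_i + D\delta u_i)dW_i$ with $\delta\alpha_i(0)=0$. By the standard SDE estimate plus the hypothesis $\mathbb{E}\int_0^T\|\acute u_i\|^2 dt < L$, together with $\mathbb{E}\int_0^T\|\tilde u_i\|^2 dt \leq L$ from Lemma \ref{lemma 4}, one obtains $\mathbb{E}\sup_{0 \leq t \leq T}\|\delta\alpha_i\|^2 \leq L\,\mathbb{E}\int_0^T\|\delta u_i\|^2 dt \leq L$, giving only $\mathbb{E}\sup_{0 \leq t \leq T}\|\delta x_i\|^2 = O(1)$.

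The main obstacle is exactly this $j = i$ bound: under the stated hypothesis on $\acute u_i$ alone, $\delta\alpha_i$ is only $O(1)$ and not $O(1/N^2)$, so the literal conclusion $\sup_{1 \leq j \leq N}\mathbb{E}\sup\|\delta x_j\|^2 = O(1/N^2)$ cannot be obtained for $j = i$ without further input. My best reading is that the statement is implicitly meant as $\sup_{j \neq i}$, consistent with the proof of the preceding Lemma \ref{lemma 9}, which tolerates $\mathbb{E}\sup\|\delta x_i\|^2 = O(1)$ by absorbing that single term into the $1/N^2$ prefactor multiplying $\sum_{i=1}^{N}$; alternatively, the hypothesis would need to be strengthened (e.g.\ $\mathbb{E}\int_0^T\|\acute u_i - \tilde u_i\|^2 dt = O(1/N^2)$), which would immediately close the $j = i$ case through the same Gronwall estimate on $\delta\alpha_i$. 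Under either reading the proof collapses to the single Gronwall argument outlined above.
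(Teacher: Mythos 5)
Your argument for $j\neq i$ is exactly the paper's: the published proof of Lemma \ref{coro 2} simply invokes the inequality $\mathbb{E}\sup_{0\leq s\leq t}\|\delta x_j(s)\|^2 \leq L\,\mathbb{E}\int_0^t\|\delta x^{(N)}\|^2\,ds$ already derived inside the proof of Lemma \ref{lemma 9}, combines it with $\mathbb{E}\sup_{0\leq t\leq T}\|\delta x^{(N)}\|^2=O(1/N^2)$, and notes that $L$ does not depend on $j$. Your diagnosis of the $j=i$ case is also correct and worth recording: the inequality being quoted is stated in the proof of Lemma \ref{lemma 9} explicitly with the qualifier $j\neq i$, because for $j=i$ the perturbation enters the dynamics directly through $B\delta u_i$ and $D\delta u_i$, and the hypothesis only yields $\mathbb{E}\int_0^T\|\delta u_i\|^2dt\leq L$, hence $\mathbb{E}\sup_{0\leq t\leq T}\|\delta x_i\|^2=O(1)$ rather than $O(1/N^2)$. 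The paper's one-line proof therefore silently carries the same restriction, and the only place the lemma is invoked --- the bound $\sum_{j\neq i}\mathbb{E}\int_0^t\|\delta x_j\|^2\,ds = O(1/N)$ in the estimate of $\varepsilon_3$ --- uses it only for $j\neq i$, so your reading of the statement as $\sup_{j\neq i}$ is the intended one; no strengthening of the hypothesis on $\acute u_i$ is needed for the downstream application.
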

\begin{proof}
By the proof of Lemma \ref{lemma 9}, for some constant $L$, we have
  \begin{equation*}
 \mathbb{E}\sup_{0\leq s\leq t}\|\delta x_j\|^2 \leq L\mathbb{E}\int_{0}^{t}\|\delta x^{(N)}\|^2ds = O\Big(\frac{1}{N^2}\Big).
 \end{equation*}
 Note that $L$  is independent of $j$ and Proposition \ref{coro 2} holds.
\end{proof}
Based on Lemma \ref{lemma 9}, we can also obtain the following estimation of $\delta x_i - \delta{a}_i$.

\begin{proof}[Proof of Lemma \ref{lemma 11}]
The dynamics of $\delta x_i - \delta{a}_i$ follows
\begin{equation*}
\left\{
 \begin{aligned}
 & d(\delta x_i - \delta{a}_i) = [A(\delta x_i - \delta{a}_i) + F\delta x^{(N)} ]dt + [C(\delta x_i - \delta{a}_i) + \tilde{F}\delta x^{(N)}]dW_i,\\
 &(\delta x_i - \delta{a}_i)(0) = 0. \\
 \end{aligned}
 \right.
\end{equation*}
By Lemma \ref{lemma 9}, we have $\mathbb{E}\sup_{0\leq t\leq T}\|\delta x^{(N)}(t)\|^2 = O\big(\frac{1}{N^2}\big).$ Thus, applying  Burkholder-Davis-Gundy's inequality and Gronwall's inequality, the lemma follows.
\end{proof}

\section{Proof of Lemma \ref{lemma 8.9}}
\begin{proof}
Motivated by \eqref{eq 60},  we consider $\langle M_2\tilde{\mathbf{u}} + M_1, \delta\mathbf{u}_i\rangle$ for some {single-agent bounded perturbation} $\delta\mathbf{u}_i$ satisfying $\mathbb{E}\int_{0}^{T}\|\delta\mathbf{u}_i\|^2dt<L$ for some constant $L$. By the calculation in Section 3, when we only perturb $\mathcal{A}_i$, the variation of the cost functional is
 \begin{equation*}
 \resizebox{\textwidth}{!}{$
 \begin{aligned}
 &\mathcal{J}^{(N)}_{soc}(\tilde{\mathbf{u}} + \delta\mathbf{u}_i) = \mathcal{J}^{(N)}_{soc}(\tilde{\mathbf{u}} ) + \mathbb{E}\bigg\{\int_{0}^{T}\langle Q(\tilde{x}_i - \Gamma \tilde{x}^{(N)} - \eta), \delta x_i\rangle - \langle \Gamma^TQ(\tilde{x}_i - \Gamma \tilde{x}^{(N)} - \eta), \delta x^{(N)}\rangle \\
 & + \sum_{j\neq i}\langle Q(\tilde{x}_j - \Gamma \tilde{x}^{(N)} - \eta), \delta x_j\rangle - \sum_{j\neq i}\langle \Gamma^TQ(\tilde{x}_j - \Gamma \tilde{x}^{(N)} - \eta), \delta x^{(N)}\rangle+ \langle R\tilde{u}_i, \delta u_i\rangle dt + \langle G(\tilde{x}_i(T) \\
 &- \bar{\Gamma} \tilde{x}^{(N)}(T) - \bar{\eta}), \delta x_i(T)\rangle- \langle \bar{\Gamma}^TG(\tilde{x}_i(T) - \bar{\Gamma} \tilde{x}^{(N)}(T) - \bar{\eta}), \delta x^{(N)}(T)\rangle + \sum_{j\neq i}\langle G(\tilde{x}_j(T) \\
 &- \bar{\Gamma} \tilde{x}^{(N)}(T) - \bar{\eta}), \delta x_j(T)\rangle- \sum_{j\neq i}\langle \bar{\Gamma}^TG(\tilde{x}_j(T) - \bar{\Gamma} \tilde{x}^{(N)}(T) - \bar{\eta}), \delta x^{(N)}(T)\rangle\bigg\}+o(\delta \mathbf{u}_i).
 \end{aligned}$}
 \end{equation*}
 Thus, for the Fr\'{e}chet derivative of $\mathcal{A}_i$ on $\tilde{\mathbf{u}}$, we have
 \begin{equation}\label{eq 55}
 \resizebox{\textwidth}{!}{$
 \begin{aligned}
&\langle M_2\tilde{\mathbf{u}} + M_1 , \delta\mathbf{u}_i\rangle = \mathbb{E}\bigg\{\int_{0}^{T}\langle Q(\tilde{x}_i - \Gamma \tilde{x}^{(N)} - \eta), \delta x_i\rangle - \langle \Gamma^TQ(\tilde{x}_i - \Gamma \tilde{x}^{(N)} - \eta), \delta x^{(N)}\rangle\\
&+ \sum_{j\neq i}\langle Q(\tilde{x}_j - \Gamma \tilde{x}^{(N)} - \eta), \delta x_j\rangle - \sum_{j\neq i}\langle \Gamma^TQ(\tilde{x}_j - \Gamma \tilde{x}^{(N)} - \eta), \delta x^{(N)}\rangle + \langle R\tilde{u}_i, \delta u_i\rangle dt\\
& + \langle G(\tilde{x}_i(T)- \bar{\Gamma} \tilde{x}^{(N)}(T) - \bar{\eta}), \delta x_i(T)\rangle- \langle \bar{\Gamma}^TG(\tilde{x}_i(T) - \bar{\Gamma} \tilde{x}^{(N)}(T) - \bar{\eta}), \delta x^{(N)}(T)\rangle \\
 &+ \sum_{j\neq i}\langle G(\tilde{x}_j(T) - \bar{\Gamma} \tilde{x}^{(N)}(T) - \bar{\eta}), \delta x_j(T)\rangle- \sum_{j\neq i}\langle \bar{\Gamma}^TG(\tilde{x}_j(T) - \bar{\Gamma} \tilde{x}^{(N)}(T) - \bar{\eta}), \delta x^{(N)}(T)\rangle\bigg\}.
 \end{aligned}$}
 \end{equation}
 Next, we will verify $\varepsilon_1,\cdots,\varepsilon_6 = o(1)$, since
 \begin{equation}\label{eq 56}
   \langle M_2\tilde{\mathbf{u}} + M_1 , \delta\mathbf{u}_i\rangle - \sum_{i=1}^{6}\varepsilon_i = \delta J_i = 0.
 \end{equation}
  Equation \eqref{eq 56} follows by \eqref{Eq21}, \eqref{eq 55} and the optimality of the auxiliary cost functional. Firstly, we consider $\varepsilon_1$ which is given by \eqref{Eq15}.
By Lemma \ref{lemma 4}, Lemma \ref{lemma 6} and \ref{lemma 9}, for some constant $L$, we have
 \begin{equation}\resizebox{\textwidth}{!}{$
 \begin{aligned}
 \varepsilon_1 &= \mathbb{E}\bigg\{\int_{0}^{T}\langle(\Gamma^TQ \Gamma -Q \Gamma) (\tilde{x}^{(N)} - \hat{x}) , N\delta x^{(N)}\rangle dt \\
 &\hspace{1cm}+\langle(\bar{\Gamma}^TQ \bar{\Gamma} -Q \bar{\Gamma}) (\bar{x}^{(N)}(T) - \hat{x}(T)) , N\delta x^{(N)}(T)\rangle\bigg\}\\
 &\leq NL\sqrt{\mathbb{E}\int_{0}^{T}\|\tilde{x}^{(N)} - \hat{x}\|^2dt\mathbb{E}\int_{0}^{T} \| \delta x^{(N)}\|^2 dt}+O\Big(\frac{1}{\sqrt{N}}\Big)
 = O\Big(\frac{1}{\sqrt{N}}\Big),\\
\varepsilon_2 &=\mathbb{E}\bigg\{\int_{0}^{T}-\langle \Gamma^TQ(\bar{x}_i - \Gamma\bar{x}^{(N)} - \eta), \delta x^{(N)}\rangle dt - \langle \bar{\Gamma}^TG(\bar{x}_i(T)-\bar{\Gamma}\hat{x}(T) - \bar{\eta}), \delta x^{(N)}(T)\rangle\bigg\}\\
 &=2L\times O\Big(\frac{1}{N^2}\Big)= O\Big(\frac{1}{N^2}\Big).
 \end{aligned}$}
 \end{equation}


Next, we will estimate $\varepsilon_3$ which is given by \eqref{Eq17}. We consider $\delta x^{( - i)} - x^{**}$ first, where
 \begin{equation*}\resizebox{\textwidth}{!}{$
 \left\{
 \begin{aligned}
 & d(\delta x^{( - i)} - x^{**}) = \bigg[(A+F)(\delta x^{( - i)} - x^{**}) - \frac{F}{N}\delta x^{(N)} \bigg]dt
 + \sum_{j\neq i}\bigg[C\delta x_{j} + \frac{\tilde{F}}{N}(\delta x^{( - i)} + \delta x_i)\bigg]dW_j, \\
 & \delta x^{( - i)}(0) - x^{**}(0) = 0.
 \end{aligned}
 \right.$}
 \end{equation*}
 By  Lemma \ref{lemma 6} and Corollary \ref{coro 2}, for some constant $L$ such that
 \begin{equation*}\resizebox{\textwidth}{!}{$
 \begin{aligned}
 & \mathbb{E}\sup_{0\leq s\leq t}\|\delta x^{( - i)}(s) - x^{**}(s)\|^2\\
 \leq & L\mathbb{E}\int_{0}^{t}\bigg[\|\delta x^{( - i)} - x^{**}\|^2 + \frac{K}{N^2}\|\delta x^{(N)}\|^2 \bigg]ds
 + \mathbb{E}\sup_{0\leq s\leq t}\bigg\|\sum_{j\neq i}\int_{0}^{t}[C\delta x_{j} + \frac{\tilde{F}}{N}(\delta x^{( - i)} + \delta x_i)]dW_j\bigg\|^2 \\
 \leq & L\mathbb{E}\int_{0}^{t}\|\delta x^{( - i)} - x^{**}\|^2 ds + O\Big(\frac{1}{N^4}\Big) + L\mathbb{E}\sum_{j\neq i}\int_{0}^{t}\|\delta x_{j}\|^2 + \|\delta x^{( N)}\|^2ds \\
 = & L\mathbb{E}\int_{0}^{t}\|\delta x^{( - i)} - x^{**}\|^2 ds + O\Big(\frac{1}{N^4}\Big) + O\Big(\frac{1}{N}\Big) + O\Big(\frac{1}{N}\Big). \\
 \end{aligned}$}
 \end{equation*}
 Thus,
 \begin{equation*}
 \mathbb{E}\sup_{0\leq t\leq T}\|\delta x^{( - i)}(t) - x^{**}(t)\|^2 = O\Big(\frac{1}{N}\Big).
 \end{equation*}
 Similarly, the dynamics of $N\delta x_j - x^*_j$ is given by
 \begin{equation*}\resizebox{\textwidth}{!}{$
 \left\{
 \begin{aligned}
 d(N\delta x_j - x^*_j) = & \Big[A(N\delta x_j - x^*_j) + F(\delta x^{( - i)} - x^{**})\Big]dt+ \Big[C(N\delta x_j - x^*_j) + \tilde{F}(\delta x^{( - i)} - x^{**})\Big]dW_j, \\
 N\delta x_j(0) - x^*_j(0) & = 0.
 \end{aligned}
 \right.$}
 \end{equation*}
 For some constant $L$, one can obtain
 \begin{equation*}
 \begin{aligned}
 \mathbb{E}\sup_{0\leq s\leq t}\|N\delta x_j - x^*_j\|^2 = & L\mathbb{E}\int_{0}^{t} \|N\delta x_j - x^*_j\|^2 + \|\delta x^{( - i)} - x^{**}\|^2ds \\
 = & L\mathbb{E}\int_{0}^{t} \|N\delta x_j - x^*_j\|^2ds + O\Big(\frac{1}{N^2}\Big).
 \end{aligned}
 \end{equation*}
 Hence, by Gronwall's inequality,
 \begin{equation*}
 \mathbb{E}\sup_{0\leq t\leq T}\|N\delta x_j - x^*_j\|^2 = O\Big(\frac{1}{N^2}\Big).
 \end{equation*}
 For $\varepsilon_3$, there exist some constants $L$ such that
 \begin{equation*}\resizebox{\textwidth}{!}{$
 \begin{aligned}
 \varepsilon_3 = & \mathbb{E}\int_{0}^{T} \frac{1}{N}\sum_{j\neq i}\langle Q(\tilde{x}_j - \Gamma \hat{x} - \eta), N\delta x_j - x_j^{*}\rangle - \frac{1}{N}\sum_{j\neq i}\langle \Gamma^TQ(\tilde{x}_j - \Gamma \hat{x} - \eta),\delta x^{( - i)}\\
 &- x^{**} \rangle dt+ \frac{1}{N}\sum_{j\neq i}\langle G(\bar{x}_j(T) - \bar{\Gamma} \hat{x}(T) - \bar{\eta}), N\delta x_j(T) - x_j^{*}(T)\rangle -\frac{1}{N}\sum_{j\neq i}\langle \bar{\Gamma}^TG(\bar{x}_j(T)\\
 &- \bar{\Gamma} \hat{x}(T) - \bar{\eta}),\delta x^{( - i)}(T) - x^{**}(T) \rangle\\
 \leq & \frac{1}{N}\sum_{j\neq i}\sqrt{\mathbb{E}\int_{0}^{T}\| Q(\tilde{x}_j - \Gamma \hat{x} - \eta)\|^2dt \mathbb{E}\int_{0}^{T} \|N\delta x_j - x_j^{*}\|^2 dt} \\
 & + \frac{1}{N}\sum_{j\neq i}\sqrt{\mathbb{E}\int_{0}^{T}\| \Gamma^TQ(\tilde{x}_j - \Gamma \hat{x} - \eta)\|^2dt\mathbb{E}\int_{0}^{T}\|\delta x^{( - i)} - x^{**} \|^2 dt}+O\Big(\frac{1}{\sqrt{N}}\Big) \\
 = & \frac{1}{N}\sum_{j\neq i}\sqrt{L\times O\Big(\frac{1}{N^2}\Big)} + \sqrt{L\times O\Big(\frac{1}{N}\Big)} = O\Big(\frac{1}{\sqrt{N}}\Big).
 \end{aligned}$}
 \end{equation*}
In what follows, we aim prove that $ \varepsilon_4 = o(1)$. By Lemma \ref{lemma 6} and \ref{lemma 9} we have
 \begin{equation*}\resizebox{\textwidth}{!}{$
 \begin{aligned}
 	\varepsilon_4 = & \mathbb{E}\int_{0}^{T} - \Big\langle\Gamma^TQ\Big(\frac{\sum_{j\neq i}\tilde{x}_j}{N} - \hat{x}\Big) , \sum_{i=1}^{N}\delta x_i\Big\rangle dt  - \Big\langle\bar{\Gamma}^TG\Big(\frac{\sum_{j\neq i}\bar{x}_j(T)}{N} - \hat{x}(T)\Big) , \sum_{i=1}^{N}\delta x_i(T)\Big\rangle\\
 = & 2\times O\Big(\frac{1}{\sqrt{N}}\Big)\times O\Big(\frac{1}{{N}}\Big).
 \end{aligned}$}
 \end{equation*}
 For $\varepsilon_5 $, which is given by \eqref{Eq22}, we need to estimate $\mathbb{E}\beta_1^j - \frac{\sum_{j\neq i}^{N}\beta_1^j}{N}$ and $\mathbb{E}y_1^j - \frac{\sum_{j\neq i}^{N}y_1^j}{N}$. Recall that in Section 3, \eqref{Eq18} can be rewritten as follows
 \begin{equation*}
 \left\{
 \begin{aligned}
 & d\mathbf{y_1} = [ -\mathbf{\bar Q}{\tilde{\mathbf x}} + \mathbf{q} - \bar{\mathbf{A}}^T\mathbf{y_1} + \sum_{j = 1}^{N}\mathbf{\bar{C}_j}^T \mathbf{z_1^j}]dt + \sum_{j = 1}^{N}\mathbf{z_1^j}dW_j,\quad
 \mathbf{y_1}(T) = \mathbf{G}\tilde{\mathbf x}+\mathbf{g},\\
 & d{\tilde{\mathbf x}} = (\mathbf{A}{\tilde{\mathbf x}} + \mathbf{b})dt + \sum_{i = 1}^{N}(\mathbf{C}_i{\tilde{\mathbf x}} + \mathbf{h_i})dW_i, \quad \tilde{\mathbf{x}}(0) = {\Xi},\\
 \end{aligned}
 \right.
\end{equation*}
where
%
%
%
\begin{equation*}\resizebox{\linewidth}{!}{$
\bar{\mathbf{ A}} = \left(\begin{array}{cccc}
 A & 0 & \cdots & 0 \\
 0 & A & \cdots & 0 \\
 \vdots & \vdots & \ddots &\vdots\\
 0 & 0 & \cdots & A \\
 \end{array}\right),\
\mathbf{\bar{C}_j}=\begin{array}{c}
 1 \\
 \vdots \\
 j^{\text{th}} \\
 \vdots \\
 N
 \end{array}\left(\begin{array}{ccccc}
 0 & \cdots &  0  & \cdots & 0\\
 \vdots & \ddots & \vdots & \vdots & \vdots \\
 0 & \cdots &  C  & \cdots & 0\\
 \vdots & \vdots & \vdots & \ddots & \vdots\\
 0 & \cdots  & 0  & \cdots & 0\\
 \end{array}\right),\
\mathbf{\bar G} = \left(\begin{array}{cccc}
 G & 0 & \cdots & 0 \\
 0 & G & \cdots & 0 \\
 \vdots & \vdots & \ddots &\vdots\\
 0 & 0 & \cdots & G \\
 \end{array}\right),\
 \mathbf{\bar Q} = \left(\begin{array}{cccc}
 Q & 0 & \cdots & 0 \\
 0 & Q & \cdots & 0 \\
 \vdots & \vdots & \ddots &\vdots\\
 0 & 0 & \cdots & Q \\
 \end{array}\right),\ $}
\end{equation*}
\begin{equation*}\resizebox{\linewidth}{!}{$
\mathbf{A}=\left(\begin{array}{cccc}
 A + \frac{F}{N} + B\Theta_1 & \frac{F}{N} & \cdots & \frac{F}{N} \\
 \frac{F}{N} & A + \frac{F}{N} + B\Theta_1 & \cdots & \frac{F}{N} \\
 \vdots & \vdots & \ddots &\vdots\\
 \frac{F}{N} & \frac{F}{N} & \cdots & A + \frac{F}{N} + B\Theta_1\\
 \end{array}\right),\
\mathbf{b}=\left(\begin{array}{c}
 B\Theta_2 \\
 \vdots\\
 B\Theta_2
 \end{array}\right),\
 \Xi = \left(\begin{array}{c}
 \xi_0 \\
 \vdots\\
 \xi_0
 \end{array}\right),\
 \mathbf{y_1} = \left(\begin{array}{c}
 y_1^1 \\
 \vdots \\
 y_1^N
 \end{array}\right),\
 \tilde{\mathbf{x}}=\left(\begin{array}{c}
 \tilde{x}_1 \\
 \vdots\\
 \tilde{x}_N
 \end{array}\right),\ $}
 \end{equation*}
\begin{equation*}\resizebox{\linewidth}{!}{$
\mathbf{C}_i=\begin{array}{c}
 1 \\
 \vdots \\
 i^{\text{th}} \\
 \vdots \\
 N
 \end{array}\left(\begin{array}{ccccccc}
 0 & \cdots & 0 & 0 & 0 & \cdots & 0\\
 \vdots & & \ddots & \vdots & \vdots & \vdots & \vdots\\
 \frac{\tilde{F}}{N} & \cdots &\frac{\tilde{F}}{N} & \frac{\tilde{F}}{N} + C + D\Theta_1 &\frac{\tilde{F}}{N} &\cdots & \frac{\tilde{F}}{N}\\
 \vdots & \vdots & \vdots & \vdots & & \ddots & \vdots\\
 0 & \cdots & 0 & 0 & 0 & \cdots & 0\\
 \end{array}\right),\
\mathbf{h_i}=\begin{array}{c}
 1 \\
 \vdots \\
 i^{\text{th}} \\
 \vdots \\
 N
 \end{array}\left(\begin{array}{c}
 0\\
 \vdots\\
 D\Theta_2\\
 \vdots\\
 0 \\
 \end{array}\right),\
 \mathbf{g} = \left(\begin{array}{c}
 G\bar{\Gamma} \hat{x} + G\bar{\eta} \\
 \vdots \\
 G\bar{\Gamma} \hat{x} + G\bar{\eta}
 \end{array}\right),\
 \mathbf{q} = \left(\begin{array}{c}
 Q\Gamma \hat{x} + Q\eta \\
 \vdots \\
 Q\Gamma \hat{x} + Q\eta
 \end{array}\right),\
 \mathbf{z_1^j} = \left(\begin{array}{c}
 \beta_1^{1j} \\
 \vdots \\
 \beta_1^{Nj}
 \end{array}\right). $}
 \end{equation*}
Let $\mathbf{y_1} = \mathbf\Lambda \tilde{\mathbf{x}} + \mathbf{\lambda}$ and the following equations can be derived
\begin{equation}\label{Lambda}
\left\{
 \begin{aligned}
 &\mathbf{z_1^j} = \mathbf\Lambda\mathbf{C}_j\tilde{\mathbf{x}} + \mathbf\Lambda\mathbf{h_j}, \\
 & \dot{\mathbf\Lambda} + \mathbf\Lambda\mathbf{A} + \mathbf{\bar Q} + \bar{\mathbf{A}}^T\mathbf\Lambda - \sum_{j = 1}^{N}\mathbf{\bar{C}_j}^T \mathbf\Lambda\mathbf{C_j} = 0,\ \mathbf\Lambda(T)=\mathbf{G}, \\
 & \dot\lambda + \bar{\mathbf{A}}^T\lambda - \mathbf{q} - \sum_{j = 1}^{N}\mathbf{\bar{C}_j}^T \mathbf\Lambda\mathbf{h_j} + \mathbf\Lambda\mathbf{b} = 0,\ \lambda(T)=\mathbf{g},
 \end{aligned}
 \right.
\end{equation}
where
$\mathbf{\Lambda}=\left(\begin{smallmatrix}
  \Lambda_{11} & \cdots & \Lambda_{1N} \\
  \vdots & \ddots & \vdots \\
  \Lambda_{N1} & \cdots & \Lambda_{NN}
\end{smallmatrix}\right)$.
Thus, $\mathbb{E}y_1^j - \frac{\sum_{j=1}^{N}y_1^j}{N} = \frac{1}{N}(I,\cdots,I)\mathbf\Lambda \left(\begin{smallmatrix}
 \tilde{x}_1 - \mathbb{E}\tilde{x}_1\\
 \vdots \\
 \tilde{x}_N - \mathbb{E}\tilde{x}_N
 \end{smallmatrix}\right)$. Clearly, if there exist some constants $L$ such that $\displaystyle\sup_{1\leq i\leq N}\sup_{0\leq t\leq T}\|\sum_{k=1}^{N}\Lambda_{ki}(t)\|_{\max} < L$ holds, then by letting $E = \left(\begin{smallmatrix}
                               1 & \cdots & 1 \\
                               \vdots & \ddots & \vdots \\
                               1 & \cdots & 1
                             \end{smallmatrix}\right)$ we have
 \begin{equation*}
   \begin{aligned}
     &\Big\|\mathbb{E}y_1^j - \frac{\sum_{j=1}^{N}y_1^j}{N}\Big\|^2 = \bigg\|\frac{1}{N}\sum_{i=1}^{N} \Big(\sum_{k=1}^{N}\Lambda_{ki}\Big) (\tilde{x}_i - \mathbb{E}\tilde{x}_i)\bigg\|^2
     \leq L \bigg\|\frac{1}{N}\sum_{i=1}^{N} E (\tilde{x}_i - \mathbb{E}\tilde{x}_i)\bigg\|^2\\ =& L\bigg\| E (\tilde{x}^{(N)} - \hat{x})\bigg\|^2 = O\big(\frac{1}{N}\big).
   \end{aligned}
 \end{equation*}
Thus, next step is to investigate $\mathbf\Lambda$.

 We can verify that $\Lambda_{ii}=\Lambda_1$ for $i=1,\cdots,N$ and $\Lambda_{ij}=\Lambda_2$ for $i\neq j$, where $\Lambda_1$ and $\Lambda_2$ satisfy
\begin{equation*}\resizebox{\linewidth}{!}{$
\left\{
 \begin{aligned}
 		&\dot{\Lambda}_1 \!+\! \Lambda_1(A \!+\! B\Theta_1) \!+\! \frac{(N\!-\!1)\Lambda_2 \!+\! \Lambda_1}{N}F \!+\! Q \!+\! A^T\Lambda_1 \!-\! \frac{1}{N}C^T\Lambda_1\tilde{F} \!-\! C^T\Lambda_1(C \!+\! D\Theta_1) \!=\! 0,\\
 &\dot{\Lambda}_2 \!+\! \Lambda_2(A \!+\! B\Theta_1) \!+\! \frac{(N\!-\!1)\Lambda_2 \!+\! \Lambda_1}{N}F \!+\! A^T\Lambda_2 \!-\! \frac{1}{N}C^T\Lambda_1\tilde{F} = 0,\\
 &\Lambda_1 (T) = G,\quad \Lambda_2(T) = 0,
 \end{aligned}
 \right.$}
\end{equation*}is a solution of $\mathbf\Lambda$,

Hence, $\sum_{k=1}^{N}\Lambda_{ki} = {\Lambda}_1 + (N-1)\Lambda_2$ and we study the uniform boundness of $\Lambda_1$ and $\Lambda_2^*\triangleq (N-1)\Lambda_2$ w.r.t $N$ and $t$, where
\begin{equation}\label{Lambda1 Lambda2*}
\left\{
 \begin{aligned}
 		&\dot{\Lambda}_1 + \Lambda_1\bigg(A + B\Theta_1 + \frac{F}{N}\bigg) + A^T\Lambda_1 - C^T\Lambda_1(C + D\Theta_1 + \frac{\tilde{F}}{N}) + \frac{\Lambda_2^*}{N}F + Q = 0,\\
 &\dot{\Lambda}_2^* + \Lambda_2^*\bigg(A + B\Theta_1 + \frac{N - 1}{N}F\bigg) + A^T\Lambda_2^* + \frac{N-1}{N}(\Lambda_1F - C^T\Lambda_1\tilde{F}) = 0,\\
 &\Lambda_1 (T) = G,\quad \Lambda_2^*(T) = 0.
 \end{aligned}
 \right.
\end{equation}
Firstly, by the linearity, for any given $N$, $\Lambda_1$ and $\Lambda_2^*$ are solvable on $[0,T]$. Let $L$ = $\sup_{0\leq t\leq T}\{\|A(t)\|_{\max}$, $\|B(t)\Theta_1(t)\|_{\max}$, $\|F(t)\|_{\max}$, $\|C(t)\|_{\max}$, $\|D(t)\Theta_1(t)\|_{\max}$, $\|\tilde{F}(t)\|_{\max}$, $\|Q(t)\|_{\max}\}$.
Introduce $\bar{\Lambda}_1$ and $\bar{\Lambda}_2^*$ satisfying
\begin{equation}
\left\{
 \begin{aligned}
 		&\dot{\bar{\Lambda}}_1 + 3L\bar{\Lambda}_1E + LE\bar{\Lambda}_1 + 3LE\bar{\Lambda}_1E + LE\bar{\Lambda}_2^* + LE = 0,\quad \bar{\Lambda}_1 (T) = G,\\
 &\dot{\bar{\Lambda}}_2^* + 2L\bar{\Lambda}_2^*E + LE\bar{\Lambda}_2^* +L(\bar{\Lambda}_1E + E\bar{\Lambda}_1E) = 0,\quad \bar{\Lambda}_2^*(T) = 0.\\
 \end{aligned}
 \right.
\end{equation}
By the linearity,  $\bar\Lambda_1$ and $\bar\Lambda_2^*$ are solvable on $[0,T]$ and surely, bounded. For any $N$ and $t\in[0,T]$, $|\Lambda_1(t)|\leq\bar{\Lambda}_1(t)$ and $|\Lambda_2^*(t)|\leq\bar{\Lambda}_2^*(t)$. Thus, there exist some constants $L$ such that $\displaystyle\sup_{1\leq i\leq N}\sup_{0\leq t\leq T}\|\sum_{k=1}^{N}\Lambda_{ki}(t)\|_{\max} < L$ holds and it follows that $\|\mathbb{E}y_1^j - \frac{\sum_{j=1}^{N}y_1^j}{N}\|^2 = o(1)$. Similarly, $\|\mathbb{E}\beta_1^j - \frac{\sum_{j\neq i}^{N}\beta_1^j}{N}\|^2 = o(1)$ and consequently $\varepsilon_5 = o(1)$.

The estimation of $\varepsilon_6$, which is given by \eqref{Eq24}, follows by Lemma \ref{lemma 11} straightforwardly and $\varepsilon_6 = O\big(\frac{1}{N^2}\big) $. Thus,
\begin{equation*}
   \langle M_2\tilde{\mathbf{u}} + M_1 , \delta\mathbf{u}_i\rangle  = \delta J_i + \sum_{i=1}^{6}\varepsilon_i= O\Big(\frac{1}{\sqrt{N}}\Big).
 \end{equation*}
 Using $\mathbb{E}\int_{0}^{T}\|\delta\mathbf{u}_i\|^2dt<L$ and the Cauchy-Schwartz inequality, $\|M_2\tilde{\mathbf{u}} + M_1\| = O\big(\frac{1}{\sqrt{N}}\big)$.
\end{proof}

\bibliographystyle{plain}

\bibliography{ref}

\
\medskip
Received xxxx 20xx; revised xxxx 20xx.
\medskip

\end{document}